%%%submitted to JDG on December 2011

\documentclass[12pt]{amsart}
\usepackage{amsfonts}
\usepackage{amsfonts,latexsym,rawfonts,amsmath,amssymb,amsthm}
\usepackage[plainpages=false]{hyperref}
\usepackage{cite}

\usepackage{graphicx}

\RequirePackage{color}

 \textwidth = 6.0 in
 \textheight = 8.25 in
 \oddsidemargin = 0.25 in
 \evensidemargin = 0.25 in

\voffset=-20pt

%\TagsOnRight
%\newcommand{TagsOnRight}

 \pagestyle{plain}
 \bibliographystyle{plain}

\numberwithin{equation}{section}

\newcommand{\beq}{\begin{equation}}
\newcommand{\eeq}{\end{equation}}
\newcommand{\beqs}{\begin{eqnarray*}}
\newcommand{\eeqs}{\end{eqnarray*}}
\newcommand{\beqn}{\begin{eqnarray}}
\newcommand{\eeqn}{\end{eqnarray}}
\newcommand{\beqa}{\begin{array}}
\newcommand{\eeqa}{\end{array}}

\newtheorem{definition}{Definition}
\newtheorem{lemma}{Lemma}
\newtheorem{remark}{Remark}

\newtheorem{theorem}{Theorem}
\newtheorem{example}{Example}

%%%%****************%%%%

\title  {sensitivity and transitivity for the induced maps on symmetric product suspensions of a topological space}

\begin{document}

%\address{Hongbo Zeng: Department of Mathematical Sciences, Tsinghua University,  China}

%\email{zenghongbo@csust.edu.cn }

%\date{}

\bibliographystyle{plain}

%\tableofcontents
\maketitle

\baselineskip=15.8pt
\parskip=3pt

\centerline {\bf   \small Hongbo Zeng$^{a}$}
\centerline {$^{a}$School of Mathematics and Statistics, Changsha University of Science and Technology;}
\centerline {Hunan Provincial Key Laboratory of Mathematical Modeling and Analysis in Engineering,}
\centerline {Changsha 410114, Hunan, China}

\vskip20pt

\noindent {\bf Abstract}:
%\begin{abstract}
Given a nondegenerate compact perfect and Hausdorff topological space $X$,$n\in \mathbb{N}$ and a function $f:X\rightarrow X$, we consider the $n$-fold symmetric product of $X$, $F_n(X)$  and the induced function $F_n(f):F_n(X)\rightarrow F_n(X)$. If $n\geq2$, we consider the $n$-fold symmetric product suspension of $X$, $SF_n(X)$ and the induced function$SF_n(f):SF_n(X)\rightarrow SF_n(X)$.
In this paper, we study the relationships between the following statements: (1) $f\in \mathcal{M}$,(2) $F_n(f)\in \mathcal{M}$, and (3)$SF_n(f)\in \mathcal{M}$, where $\mathcal{M}$ is one of the following classes of map: sensitive, cofinitely sensitive, multi-sensitive, Z-transitive, quasi-periodic, accessible, indecomposable, multi-transitive, $\bigtriangleup$-transitive, $\bigtriangleup$-mixing, Martelli's chaos, Transitive, $F$-system,  $TT_{++}$, Touhey, two-sided transitive, fully exact, strongly transitive. These results improve and extend some existing ones.

%\end{abstract}
 \vskip20pt
 \noindent{\bf Key Words:}  sensitive; transitive; chaos; $n$-fold symmetric products suspension
 \vskip20pt

\vskip20pt

\baselineskip=15.8pt
\parskip=3pt

%\newpage

%\tableofcontents
\maketitle

\baselineskip=15.8pt
\parskip=3.0pt

%\begin{abstract}

%\end{abstract}

%\keywords{iteration invariant; distributional chaos; Li-York chaos}

%\begin{multicols}{2}
\section{Introduction}
\noindent  Given a continuum (nonempty compact, connected metric space) $X$ and $n\in\mathbb{N}$, the $n$-fold symmetric product of $X$, $F_n(X)$, consists of all nonempty subsets of $X$ with at most $n$ points, and it is originally defined in \cite{a1}. If $n\geq2$, the $n$-fold symmetric product suspension of the continuum $X$, $SF_n(X)$, defined as the quotient space $F_n(X)/F_1(X)$, was introduced in \cite{a2}. Given a map $f:X\rightarrow X$, we consider the induced map $F_n(f):F_n(X)\rightarrow F_n(X)$ given by $F_n(f)(A)=f(A)$ for all $A\in F_n(X)$ \cite{a1}. If $n\geq2$, we consider the induced map $SF_n(f):SF_n(X)\rightarrow SF_n(X)$, this function is called $induced$ $map$ of $f$ on the $n$-fold symmetric product suspension of $X$ \cite{a3}. If $X$ is a continuum and $f:X\rightarrow X$ is a map, the dynamical system $(X,f)$ induces the dynamical systems $(F_n(X),F_n(f))$ and $(SF_n(X),SF_n(f))$.

\par
Since the introduction of these spaces, one of the most attractive problems and which is still highly
relevant for many researchers, consists of analyzing possible relationships between the conditions: (1) $f\in \mathcal{M}$,(2) $F_n(f)\in \mathcal{M}$, and (3)$SF_n(f)\in \mathcal{M}$, where $\mathcal{M}$ is a class of dynamic functions. In \cite{a4}, G. Higuera et al. study the relationships between the functions $f$ and $F_n(f)$, when one of them is: transitive, mixing, weakly mixing, Devaney chaotic or P property. In \cite{a5}, F. Barragn et al. study the relationships between the following statements: $f\in \mathcal{M}$, $F_n(f)\in \mathcal{M}$ and $SF_n(f)\in \mathcal{M}$, where $\mathcal{M}$ is one of the following classes of maps: exact, mixing, weakly mixing, transitive, totally transitive, strongly transitive, chaotic, minimal, irreducible, feebly open or turbulent. Continuing with the study of dynamic functions, in \cite{a6}, F. Barragn et al. study the relationships between the above statements, where $\mathcal{M}$ is one of the following classes of maps: mildly mixing, strongly product transitive, strongly exact transitive, fully exact, exact, exact transitive, strongly transitive, syndetically transitive or iteratively almost open.

\par
Recently, in \cite{a7} (2022), A. Illanes et al. solved most of the problems posed by F. Barragn et al. in \cite{a5}
and \cite{a6}, related to the properties of minimality, irreducibility, strong transitivity and turbulence.

\par
On the other hand, in recent years it has been of great interest to study the dynamic properties when the phase space is a topological space (\cite{a8,a9}). Regarding the dynamic functions and their induced functions, considering $X$ a topological space, in \cite{a10}, F. Barragn et al. study the relationship between the conditions: $f\in \mathcal{M}$ and $F_n(f)\in \mathcal{M}$, where $\mathcal{M}$ is one of the following classes of functions: exact, transitive, $Z$-transitive, $Z_+$-transitive, mixing, weakly mixing, chaotic, turbulent, strongly transitive, totally transitive,
orbit-transitive, strictly orbit-transitive, $\omega$-transitive, minimal, $TT_{++}$, semi-open or irreducible. In \cite{a11}, A. Rojas et al. work with topological spaces and analyzed the relationships between the following conditions:
$f\in \mathcal{M}$ and $F_n(f)\in \mathcal{M}$, where $\mathcal{M}$ is one of the following classes of functions: exact, transitive, strongly transitive, totally transitive, orbit-transitive, strictly orbit-transitive, $\omega$-transitive, mixing, weakly mixing, mild mixing, chaotic, exactly Devaney chaotic, minimal, backward minimal, totally minimal, $TT_{++}$, scattering, Touhey or an $F$-system.

\par
In this paper we begin the study of the $n$-fold symmetric product suspension, $SF_n(f)$, considering the phase space $X$ a nondegenerate, compact, perfect and Hausdorff topological space and $f:X\rightarrow X$
a function. We study the relationships between the following dynamical systems $(X,f)$, $(F_n(X),F_n(f))$ and $(SF_n(X),SF_n(f))$. Specifically, if $\mathcal{M}$ is one of the following classes of functions: almost transitive,
exact, mixing, transitive, totally transitive, strongly transitive, exactly Devaney chaotic, orbit-transitive,
an $F$-system, scattering, $TT_{++}$, Touhey, backward minimal, totally minimal, Property $P$, strong property
$P$ or two-sided transitive, we study the relationships between the following statements: (1) $f\in \mathcal{M}$,(2) $F_n(f)\in \mathcal{M}$, and (3)$SF_n(f)\in \mathcal{M}$.

\par
   This paper is organized as follows. In Section 2, we will first state some preliminaries, definitions and some lemmas. The main conclusions will be given in Section 3.

\section{Preparations and lemmas}
A (discrete) dynamical system is a pair $(X,f)$, where $X$ is a nondegenerate topological space and $f:X\rightarrow X$ is a function, $X$ is called the phase space. The symbols $\mathbb{Z}$, $\mathbb{Z_+}$ and $\mathbb{N}$ denotes the set of integers, the set of nonnegative integers and the set of positive integers, respectively. A compactum is a nondegenerate compact, perfect, Hausdorff topological space. A map is a continuous function.
Let $(X,f)$ be a dynamical system and let $x\in X$. The orbit of $x$ under $f$ is the set $orb(x,f)={f^k(x):
k\in\mathbb{Z_+}}$. A point $x$ of $X$ is a transitive point of the function $f$ if the set $orb(x,f)$ is dense in X. The set of transitive points of $f$ is denoted by $trans(f)$. The point $x$ is a fixed point of $f$ if $f(x)=x$. The point $x$ is a periodic point of $f$ if there exists $k\in\mathbb{N}$ such that $f^k(x)=x$. The set of periodic points of $f$ is denoted by $Per(f)$. The point $x$ is a quasi-periodic point of $f$ provided that for each open subset $U$ of $X$ such that $x\in U$, there exists $m\in\mathbb{N}$ such that $f^{km}(x)\in U$ for every $k\geq0$. The point $x$ is a recurrent point of $f$ if for each open subset $U$ of $X$ such that $x\in U$, there exists $m\in\mathbb{N}$ such that $f^{m}(x)\in U$. The point $x$ is a nonwandering point of $f$ provided that for all open subset $U$ in $X$ such that $x\in U$ there exists $m\in\mathbb{N}$ such that $f^m(U)\cap V\neq\emptyset$. A point $y$ in $X$ is an $\omega$-limit point of $x$ under $f$ if for every $k\in\mathbb{N}$ and for all open
subsets $U$ of $X$ such that $y\in U$, there exists a positive integer $m\geq k$ such that $f^m(x)\in U$. The set of
$\omega$-limit points of $x$ under $f$, is denoted by $\omega(x,f)$ and is called $\omega$-limit set of $x$. Given a subset $A$ of $X$, we say that $A$ is +invariant under $f$ if $f(A)\subseteq A$.

For the convenience of the following context, we denote $$N_{f}(U,V)=\{n\in \mathbb{N} \mid f^n(U)\cap V\neq\emptyset\},$$
denote $$n_{f}(U,V)=\{n\in \mathbb{N} : U\cap f^{-n}(V)\neq\emptyset\},$$
and denote $$N_{}(U,\delta)=\{n\in\mathbb{N}\mid \text{there exist} \ x,y\in U \ \text{such that } d(f_1^n(x),f_1^n(x))>\delta \}$$ for any nonempty open sets $U,V$ of $X$.

\begin{definition}

An autonomous system $(X, f)$  is said to be sensitive, if there is $\delta>0$ such that for any nonempty open set $U\subset X$, there exist $x,y\in U$ and $n\in \mathbb{N}$ such that $d(f^n(x),f^n(y))>\delta$.

An autonomous system $(X, f)$  is said to be cofinitely sensitive, if there is $\delta>0$ and $N\in \mathbb{N}$ such that for any nonempty open set $U\subset X$ and any $n>N$, there exist $x,y\in U$ such that $d(f^n(x),f^n(y))>\delta$.

An autonomous system $(X, f)$ is said to be multi-sensitive, if there exists $\delta>0$ such that for any $m\in \mathbb{N}$ and any nonempty open sets $U_1,U_2,...,U_m\subset X$, $\bigcap_{i=1}^mN_{}(U_i,\delta)\neq\emptyset$, where $\delta$ is called constant of sensitivity.
\end{definition}

\begin{definition}
Let $(X,f)$ be a dynamical system. $f$ is said to be $\mathbb{Z}$-transitive, if for any two non-empty subsets $U,V\subseteq X$, there exists $n\in \mathbb{Z}$ such that $f^n(U)\cap V\neq\emptyset$.

The autonomous system $(X, f)$  is said to be (topologically) transitive, if for any two non-empty subsets $U,V\subseteq X$, there exists $n\in \mathbb{N}$ such that $f^n(U)\cap V\neq\emptyset$.

The autonomous system $(X, f)$  is said to be weakly mixing, if for any four non-empty subsets $U_1,U_2,V_1,V_2\subseteq X$, there exists $n\in \mathbb{N}$ such that $f^n(U_1)\cap V_1\neq\emptyset$ and $f^n(U_2)\cap V_2\neq\emptyset$.

The autonomous system $(X, f)$  is said to be mixing, if for any two non-empty subsets $U,V\subseteq X$, there exists $N\in \mathbb{N}$ such that $f^n(U)\cap V\neq\emptyset$ for any $n\geq N$.

The autonomous system $(X, f)$  is said to be totally transitive, if for any $n\in \mathbb{N}$, $f^{n}$ is transitive.

The autonomous system $(X, f)$  is said to be strongly transitive, if for any non-empty subset $U\subseteq X$, there exists $M\in \mathbb{N}$ such that $\cup_{i=1}^Mf^i(U)=X$.

The autonomous discrete system $(X, f)$ is called multi-transitive if $f\times f^{2}\times\cdots\times f^{m}:X^m\rightarrow X^m$ is transitive for any $m\in \mathbb{N}$.

The autonomous discrete system $(X, f)$ is called $TT_++$, if for any pair of nonempty open subsets $U$ and $V$ of $X$, the set $n_f(U,V)$ is infinite.

The autonomous discrete system $(X, f)$ is called Touhey, if for every pair of nonempty open subsets $U$ and $V$ of $X$, there exist a periodic point $x\in U$ and $k\in \mathbb{Z_+}$ such that $f^k(x)\in V$.

The autonomous discrete system $(X, f)$ is called two-sided transitive, if $f$ is a homeomorphism and $\{f^n(x): n\in \mathbb{Z_+}\}$ is dense in $X$ for some $x\in X$.

The autonomous discrete system $(X, f)$ is called fully exact, if for every pair of nonempty open subsets $U$ and $V$ of $X$, there exist $k\in \mathbb{N}$ such that $int[f^(U)\cap f^k(V))]\neq \emptyset$.
\end{definition}

\begin{definition}
The autonomous system $(X, f)$  is said to be accessible, if for any $\varepsilon>0$ and any nonempty open set $U,V\subset X$, there exist $x\in U,y\in V$ and $n\in \mathbb{N}$ such that $d(f^n(x),f^n(y))<\varepsilon$.
\end{definition}

\begin{definition}\cite{a13}
 Let $(X,f)$ be a dynamical system. $f$ is said to be indecomposable if for any two $f$-invariant closed subsets $A,B$ of $X$ with $int(A)\neq\emptyset$ and $int(B)\neq\emptyset$, we have $int(A\cap B)\neq\emptyset$.
\end{definition}

\begin{definition}

The autonomous system $(X, f)$  is said to be $\bigtriangleup$-transitive, if for any  $n\in \mathbb{N}$, there exists a dense subset $Y$ of $X$ such that for each $y\in Y$, $(f^n(y),f^{2n}(y),...,f^{mn}(y):n\in\mathbb{Z_+})$ is dense in $=X^n$.

The autonomous system $(X, f)$  is said to be $\bigtriangleup$-mixing, if for any  $n\in \mathbb{N}$ and any infinite $B$ of $\mathbb{Z_+}$, there exists a dense subset $Y$ of $X$ such that for each $y\in Y$, $(f^n(y),f^{2n}(y),...,f^{mn}(y):n\in B)$ is dense in $=X^n$.

\end{definition}

\begin{definition}

Let $(X, f)$ be a compact metric space, the orbit of a point $x\in X$  is said to be unstable, if there exists $\delta>0$ such that for any neighborhood $U$ of $x$, there exists a point $y\in U$ and  $n\in \mathbb{Z_+}$ satisfying $d(f^n(x),f^n(y))>\arctan\delta$. The map $f$ is said to be  Martelli's chaos if there exists $x_0\in X$ such that the orbit of $x_0$ is dense in $X$ and unstable.

\end{definition}

\begin{definition}
 A point $x\in X$ is said to be transitive if the orbit of $x$ is dense in $X$. The autonomous system $(X, f)$  is said to be minimal if all the points of $X$ are transitive.
\end{definition}

\begin{definition}
An $F$-system, if $f$ is totally transitive and $Per(f)$ is dense in $X$.
\end{definition}

In \cite{a18}, the authors gave the definition of the metric on $SF_n(X)$. Let $X$ be a compactum and let $n\geq2$. Let $\mathbb{S}_n(X)=\{F_1(X)\cup \{A\}: A\in F_n(X)\}$. Note that  $\mathbb{S}_n(X)\subseteq\mathcal{C}_2(F_n(X))$. Let $G_n: SF_n(X)\twoheadrightarrow \mathbb{S}_n(X)$ be given by $G_n(\chi)=F_1(X)\cup q^{-1}(\chi)$. Then $G_n$ is a homeomorphism. Next, define the metric on $SF_n(X)$ $\rho_X^n: SF_n(f)\times SF_n(f)\rightarrow [0,\infty)$ by $\rho_X^n(\chi_1,\chi_2)=\mathcal{H}_X^2(G_n(\chi_1),(G_n(\chi_2))$, where $\mathcal{H}_X^2$ is the Hausdorff metric on $\mathcal{C}_2(F_n(X))$ induced by the Hausdorff metric $\mathcal{H}_X$ on $F_n(X)$.

Let $X$ be a compactum and let $n\geq2$. We consider the function $SF_n(f):SF_n(X)\rightarrow SF_n(X)$ given by
\[SF_n(f)(\chi)=\begin{cases}
q(F_n(f)(q^{-1}(\chi))), &   \text{if}\quad \chi\neq F_X, \\
F_X, &   \text{if}\quad \chi=F_X. \\
\end{cases}\]
Note that, $SF_n(f)$ is continuous, it is called induced map of $f$ on  $n$-fold symmetric product suspension\cite{a16}. Besides, it is easy to see that $q\circ F_n(f)=SF_n(f)\circ q$ and the following Remark

\begin{remark}\label{remark1}
$q\circ F_n(f)=SF_n(f)\circ q$. $q\circ F_n(f)^k=(SF_n(f))^k\circ q$. For any $A\in F_n(X)\setminus F_1(X)$, we have $q\circ F_n(f)^{-1}(A)=SF_n(f)^{-1}\circ q(A)$.
\end{remark}

\begin{remark}\label{remark2}
The space $SF_n(X)\setminus\{F_X\}$ is homeomorphic to $F_n(X)\setminus F_1(X)$.
\end{remark}

\begin{example}\cite[Example 4.6]{a5}

Let $f:S^1\rightarrow S^1$ be the map defined by $f(e^{2\pi i\theta})=e^{2\pi i(\theta+\alpha)}$, where $S^1$ is a unit circle on the complex plane, $\theta\in [0,1]$ and $\alpha$ is a irrational number. Let $\mathcal{M}$ is one of the following classes of map: exact, mixing, weakly mixing, transitive, totally transitive, strongly transitive, chaotic and minimal, then $SF_n(f)\notin \mathcal{M}$, but $f$ is transitive, totally transitive, strongly transitive, and minimal.

\end{example}

\begin{lemma}\label{yinli1}
For any $\Gamma$ be subset of $SF_n(X)$, we have that $F_n(f)\circ q^{-1}(\Gamma)\subseteq q^{-1}\circ SF_n(f)(\Gamma)$.
\end{lemma}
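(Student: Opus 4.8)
The plan is to unwind both sides of the claimed inclusion as collections of elements of $F_n(X)$ and then reduce everything to the commutation relation recorded in Remark~\ref{remark1}. Concretely, fix an arbitrary $B\in F_n(f)\circ q^{-1}(\Gamma)$. By definition of the image there is some $A\in F_n(X)$ with $A\in q^{-1}(\Gamma)$, i.e. $q(A)\in\Gamma$, and $B=F_n(f)(A)$. To conclude that $B\in q^{-1}\circ SF_n(f)(\Gamma)$ it suffices to verify that $q(B)\in SF_n(f)(\Gamma)$.

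For this, invoke Remark~\ref{remark1}: the identity $q\circ F_n(f)=SF_n(f)\circ q$ holds as an equality of maps $F_n(X)\to SF_n(X)$. Applying it to $A$ gives $q(B)=q\bigl(F_n(f)(A)\bigr)=SF_n(f)\bigl(q(A)\bigr)$. Since $q(A)$ is an element of $\Gamma$, its image $SF_n(f)(q(A))$ lies in $SF_n(f)(\Gamma)$, which is exactly what was needed. As $B$ was an arbitrary element of $F_n(f)\circ q^{-1}(\Gamma)$, this establishes $F_n(f)\circ q^{-1}(\Gamma)\subseteq q^{-1}\circ SF_n(f)(\Gamma)$.

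The only point requiring a little care is the bookkeeping between the two uses of $q^{-1}$ — the preimage of the whole set $\Gamma$ here, versus the preimage of a single point $\chi$ appearing in the defining formula for $SF_n(f)$ — together with the observation that the identity $q\circ F_n(f)=SF_n(f)\circ q$ genuinely holds at \emph{every} point of $F_n(X)$, including the degenerate cases where $A\in F_1(X)$ or where $F_n(f)(A)\in F_1(X)$, in which both sides collapse to $F_X$. Granting Remark~\ref{remark1} these cases are already absorbed, so there is no real obstacle: the argument is a one-line diagram chase. It is worth noting that only the stated inclusion holds, and not equality: $q^{-1}\circ SF_n(f)(\Gamma)$ contains all of $F_1(X)$ as soon as $F_X\in SF_n(f)(\Gamma)$, whereas $F_n(f)\circ q^{-1}(\Gamma)$ in general does not, which is precisely why the lemma is phrased as an inclusion.
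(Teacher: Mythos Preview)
Your proof is correct and is essentially the same argument as the paper's: both reduce immediately to the commutation identity $q\circ F_n(f)=SF_n(f)\circ q$ of Remark~\ref{remark1}. The only cosmetic difference is that the paper carries this out at the set level (computing $q\bigl(F_n(f)(q^{-1}(\Gamma))\bigr)=SF_n(f)(q(q^{-1}(\Gamma)))=SF_n(f)(\Gamma)$ and then using $S\subseteq q^{-1}(q(S))$), whereas you perform the equivalent elementwise diagram chase.
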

\begin{proof}
Since $q\circ F_n(f)\circ q^{-1}(\Gamma)=SF_n(f)\circ q\circ q^{-1}(\Gamma)=SF_n(f)(\Gamma)$, we have that $F_n(f)\circ q^{-1}(\Gamma)\subseteq q^{-1}\circ SF_n(f)(\Gamma)$.
\end{proof}

\begin{lemma}\cite[Theorem 3.13]{a15}\label{yinli2}
Let $(X,f)$ be a dynamical system. $f\times f$ is Z-transitive if and only if $f$ is weakly mixing.
\end{lemma}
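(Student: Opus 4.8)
The plan is to dispatch the easy implication at once and to spend the effort on the converse. The implication ``$f$ weakly mixing $\Rightarrow f\times f$ is $\mathbb Z$-transitive'' is routine: the sets $U_1\times U_2$ with $U_1,U_2$ nonempty open in $X$ form a base for $X\times X$, and the weak mixing condition applied to $U_1,U_2,V_1,V_2$ says precisely that for any two such basic sets there is $n\in\mathbb N$ with $(f\times f)^n(U_1\times U_2)\cap(V_1\times V_2)\neq\emptyset$. Hence $f\times f$ is transitive, and a fortiori $\mathbb Z$-transitive since $\mathbb N\subseteq\mathbb Z$. So the substance is the converse, and I would prove it by showing that $f\times f$ is actually transitive, which by the same base argument is equivalent to $f$ being weakly mixing.

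So assume $f\times f$ is $\mathbb Z$-transitive (and, as usual, that $f$ is a map, so that preimages of open sets are open). The first step is to show that $f$ itself is transitive. Let $A,B$ be nonempty open subsets of $X$; since the phase space is a compactum, in particular perfect and Hausdorff, after shrinking we may assume $A\cap B=\emptyset$. Apply the $\mathbb Z$-transitivity of $f\times f$ to the pair $A\times B$ and $B\times A$: there is $n\in\mathbb Z$ with $f^n(A)\cap B\neq\emptyset$ and $f^n(B)\cap A\neq\emptyset$. If $n\geq 1$, then $n\in N_f(A,B)$; if $n=-m<0$, then $f^m(B)\cap A\neq\emptyset$ and $f^m(A)\cap B\neq\emptyset$, so $m\in N_f(A,B)$; and $n=0$ is impossible because $A\cap B=\emptyset$. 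In every case $N_f(A,B)\neq\emptyset$, so $f$ is transitive.

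The second step is to upgrade this to weak mixing, i.e.\ to prove that $f\times f$ is transitive: given nonempty open $\mathcal U=U_1\times U_2$ and $\mathcal V=V_1\times V_2$ in $X\times X$, which we may take disjoint, we must produce a \emph{positive} $n$ with $(f\times f)^n(\mathcal U)\cap\mathcal V\neq\emptyset$. The plan is to combine $\mathbb Z$-transitivity of $f\times f$ with the transitivity of $f$ along the lines of Furstenberg's weak-mixing criterion: $\mathbb Z$-transitivity supplies an integer $n$ with $(f\times f)^n(\mathcal U)\cap\mathcal V\neq\emptyset$, and when $n\leq 0$, say $n=-m$, one reinvokes $\mathbb Z$-transitivity on suitably shrunk open sets --- replacing $\mathcal V$ by the nonempty open set $\mathcal V\cap(f\times f)^{-m}(\mathcal U)$, exploiting the freedom in the four coordinate open sets $U_1,U_2,V_1,V_2$, and using compactness of $X$ to pass to nested intersections so that the procedure cannot regress indefinitely --- in order to trade a non-positive return time for a positive one. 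Once $f\times f$ is known to be transitive, the base argument of the first paragraph gives that $f$ is weakly mixing, completing the proof.

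The step I expect to be the main obstacle is exactly the sign bookkeeping in this last part: the $\mathbb Z$-transitivity hypothesis only returns an integer, possibly $0$ or negative, whereas weak mixing needs a common \emph{positive} iterate, so the whole converse hinges on showing that the product structure of $f\times f$, together with the transitivity of $f$ and the compactness of the phase space, always lets one convert a non-positive witness into a positive one. A secondary, routine point is the reduction to disjoint open sets used twice above, which is where the perfectness and Hausdorffness of the phase space enter.
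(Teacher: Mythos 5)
There is a genuine gap, and it sits exactly where the content of the lemma lies. Note first that the paper itself does not prove this statement: it is imported verbatim as \cite[Theorem 3.13]{a15}, so there is no internal proof to match; your proposal must stand on its own. Your easy direction is fine, and your Step 1 (the swap trick applied to $A\times B$ and $B\times A$, after shrinking to disjoint sets using that the space is perfect and Hausdorff) correctly shows that $f$ is transitive with a \emph{positive} time, since a negative witness $-m$ for the swapped pair converts into $m\in N_f(A,B)$ and $n=0$ is excluded by disjointness.

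The problem is Step 2, which is the heart of the converse and is only a plan, not a proof. Weak mixing requires a single positive $n$ with $f^n(U_1)\cap V_1\neq\emptyset$ \emph{and} $f^n(U_2)\cap V_2\neq\emptyset$, i.e.\ $N_f(U_1,V_1)\cap N_f(U_2,V_2)\neq\emptyset$ for two independent pairs. A single application of $\mathbb Z$-transitivity of $f\times f$ to a rectangle pair only yields a witness of unknown sign, and the swap trick that saved Step 1 does not apply here: swapping would require acting on pairs of rectangles, i.e.\ $\mathbb Z$-transitivity of the fourfold product, which you do not have. What the swap trick does give (applying it to $U\times V$ and $V\times U$) is the symmetric statement $N_f(U,V)\cap N_f(V,U)\neq\emptyset$, but passing from that (plus transitivity of $f$) to genuine weak mixing is precisely the nontrivial content of the cited theorem, and your proposal does not supply it. The specific mechanism you describe does not work as stated: if the witness is $n=-m$, replacing $\mathcal V$ by $\mathcal V\cap(f\times f)^{-m}(\mathcal U)$ and reinvoking $\mathbb Z$-transitivity can again return a non-positive witness, which only produces more transitions from $\mathcal V$ forward into $\mathcal U$; nothing forces the iteration to terminate with a positive transition from $\mathcal U$ to $\mathcal V$, and the appeal to ``compactness and nested intersections'' is not an argument (nested nonempty open sets in a compactum carry no relevant limit structure here, and no positive return time is extracted in the limit). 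So the converse direction --- the only substantive part of the lemma --- remains unproven in your write-up; you would either need to reproduce the argument of \cite[Theorem 3.13]{a15} or give an honest derivation of weak mixing from the symmetric property above, which is a real step, not bookkeeping.
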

\section{Main results}

The first three Theorems show that the relationships between the following statements: (1) $f$ is sensitive(cofinitely sensitive, multi-sensitive, respectively), (2)$F_n(f)$ is sensitive(cofinitely sensitive, multi-sensitive, respectively), (3) $SF_n(f)$ is sensitive(cofinitely sensitive, multi-sensitive, respectively).

\begin{theorem}
Let $X$ be a continuum, let $n$ be an integer greater than or equal to two, and let $f: X \rightarrow X$ be a map. Consider the following statements:

(1) $f$ is sensitive;

(2) $F_n(f)$ is sensitive;

(3) $SF_n(f)$ is sensitive.

Then (3) implies (2), and (2) implies (1).
\end{theorem}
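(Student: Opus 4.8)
Both implications will be obtained by transporting a witnessing open set from the larger space down to the smaller one and then extracting, from a Hausdorff-distance estimate, an honest pair of points whose orbits separate. For $(2)\Rightarrow(1)$ the bridge is the open set $\langle U\rangle=\{A\in F_n(X):A\subseteq U\}$ attached to a nonempty open $U\subseteq X$, together with the identity $F_n(f)^m(A)=f^m(A)$. For $(3)\Rightarrow(2)$ the bridge is the quotient map $q\colon F_n(X)\to SF_n(X)$, and the only delicate point is that $q$ collapses the fibre $F_1(X)$, so one must first move inside $F_n(X)\setminus F_1(X)$. Throughout I may assume $X$ is nondegenerate (otherwise all three statements are vacuous), hence perfect.

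\emph{The implication $(2)\Rightarrow(1)$.} Suppose $F_n(f)$ is sensitive with constant $\delta>0$ and let $U\subseteq X$ be nonempty open. The set $\langle U\rangle$ is nonempty (it contains every singleton from $U$) and open in $F_n(X)$, so sensitivity yields $A,B\in\langle U\rangle$ and $m\in\mathbb{N}$ with $\mathcal{H}_X\!\bigl(f^m(A),f^m(B)\bigr)>\delta$. Unwinding the Hausdorff metric for finite sets, either some $a\in A$ satisfies $d(f^m(a),f^m(b))>\delta$ for all $b\in B$, or the symmetric statement holds with $A,B$ exchanged; in the first case pick any $b\in B$, and then $a,b\in U$ with $d(f^m(a),f^m(b))>\delta$. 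Hence $f$ is sensitive, with the same $\delta$.

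\emph{The implication $(3)\Rightarrow(2)$.} Suppose $SF_n(f)$ is sensitive with constant $\delta>0$ for the metric $\rho_X^n$. First I record a metric comparison: since $G_n(q(A))=F_1(X)\cup\{A\}$ for every $A\in F_n(X)$ (whether or not $A\in F_1(X)$), and every point of $F_1(X)$ lies in both $F_1(X)\cup\{A\}$ and $F_1(X)\cup\{B\}$, the Hausdorff distance $\mathcal{H}_X^2$ between these two sets only sees the comparisons of $A$ against $F_1(X)\cup\{B\}$ and of $B$ against $F_1(X)\cup\{A\}$, each bounded by $\mathcal{H}_X(A,B)$; therefore
\[
\rho_X^n(q(A),q(B))\;=\;\mathcal{H}_X^2\bigl(F_1(X)\cup\{A\},\,F_1(X)\cup\{B\}\bigr)\;\le\;\mathcal{H}_X(A,B)\qquad\text{for all }A,B\in F_n(X).
\]
Next, because $X$ is a nondegenerate continuum it is perfect, and since $n\ge2$ the set $F_n(X)\setminus F_1(X)$ is dense in $F_n(X)$. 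Thus for a nonempty open $\mathcal{V}\subseteq F_n(X)$ the set $\mathcal{U}:=\mathcal{V}\cap\bigl(F_n(X)\setminus F_1(X)\bigr)$ is nonempty and open, and since $q$ is injective off $F_1(X)$ we get $q^{-1}(q(\mathcal{U}))=\mathcal{U}$, so $q(\mathcal{U})$ is a nonempty open subset of $SF_n(X)$ by definition of the quotient topology (cf.\ Remark \ref{remark2}). Apply sensitivity of $SF_n(f)$ to $q(\mathcal{U})$: there are $\chi_1,\chi_2\in q(\mathcal{U})$, say $\chi_i=q(A_i)$ with $A_i\in\mathcal{U}$, and $m\in\mathbb{N}$ with $\rho_X^n\bigl(SF_n(f)^m(\chi_1),SF_n(f)^m(\chi_2)\bigr)>\delta$. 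By Remark \ref{remark1} one has $SF_n(f)^m(\chi_i)=q\bigl(F_n(f)^m(A_i)\bigr)$, so the metric comparison upgrades this to $\mathcal{H}_X\!\bigl(F_n(f)^m(A_1),F_n(f)^m(A_2)\bigr)>\delta$, with $A_1,A_2\in\mathcal{U}\subseteq\mathcal{V}$. Hence $F_n(f)$ is sensitive, again with the same $\delta$.

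\textbf{Main obstacle.} The verifications that $\langle U\rangle$ is open and that $F_n(X)\setminus F_1(X)$ is dense are routine. The one step deserving care is the metric comparison $\rho_X^n(q(A),q(B))\le\mathcal{H}_X(A,B)$, and in particular the observation that it holds for \emph{all} $A,B\in F_n(X)$ — so that in the last step we need not worry about whether $F_n(f)^m(A_i)$ has fallen into $F_1(X)$. I do not anticipate a genuine obstacle beyond this bookkeeping; as a bonus, the constant of sensitivity is carried along unchanged in both implications.
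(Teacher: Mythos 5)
Your proof is correct and follows essentially the same route as the paper: for (3)$\Rightarrow$(2) both arguments push an open subset of $F_n(X)$ that misses $F_1(X)$ down through $q$, apply sensitivity of $SF_n(f)$, and conclude via the comparison $\rho_X^n(q(A),q(B))\le \mathcal{H}_X(A,B)$ (which the paper establishes in its displayed chain of equalities), while for (2)$\Rightarrow$(1) both transport a small open set of $X$ into $F_n(X)$ and read the separation off the Hausdorff metric. The only differences are bookkeeping: you get an open set disjoint from $F_1(X)$ by intersecting with the open dense set $F_n(X)\setminus F_1(X)$ instead of the paper's shrinking to sets $W_1,\dots,W_n$ inside a basic open set, and in (2)$\Rightarrow$(1) you extract the separated pair $a,b$ from two arbitrary sets $A,B\in\langle U\rangle$ directly from the definition of $\mathcal{H}_X$ (keeping the constant $\delta$), whereas the paper compares a set $A$ against the singleton $\{x\}$, which matches the stated definition of sensitivity only after an extra triangle-inequality step.
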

\begin{proof}

Suppose that $SF_n(f)$ is sensitive with $\delta>0$ as a constant of sensitivity. We prove that $F_n(f)$ is also sensitive with $\delta>0$ as a constant of sensitivity. Let $\mathcal{U}$ be nonempty open subset of $F_n(X)$. By \cite[Lemma4.2]{a4}, there exist nonempty open subsets $U_1,U_2,...,U_n$ of $X$ such that $\langle U_1,U_2,...,U_n\rangle_n\subseteq \mathcal{U}$. For every $i\in\{1,2,...,n\}$, let $W_i$ be a nonempty open subset of $X$ such that $W_i\subseteq U_i$ and for any $i,j\in\{1,2,...,n\}$, $W_i\cap W_j\neq\emptyset$ if $i\neq j$. Note that $\langle U_1,U_2,...,U_n\rangle_n$ is nonempty open subsets of $F_n(X)$ such that $\langle W_1,W_2,...,W_n\rangle_n\subseteq \langle U_1,U_2,...,U_n\rangle_n\subseteq \mathcal{U}$ and $\langle W_1,W_2,...,W_n\rangle_n\cap F_1(X)=\emptyset$. By Remark \ref{remark2}, $q(\langle W_1,W_2,...,W_n\rangle_n)$ is nonempty open subsets of $SF_n(X)$ and $F_X\notin q(\langle W_1,W_2,...,W_n\rangle_n)$. Since $SF_n(f)$ be sensitive, there exist $\chi,\psi\in q(\mathcal{K}), m\in\mathbb{N}$, such that
$$\rho_X^n(SF_n(f)^m(\chi),SF_n(f)^m(\psi))>\delta.$$
Put $A=q^{-1}(\chi),B=q^{-1}(\psi)$. Then we have $\chi=q(A),\psi=q(B),A,B\in\mathcal{U}$. Therefore,

\begin{equation*}
\begin{aligned}
\delta&<\rho_X^n(SF_n(f)^m(\chi),SF_n(f)^m(\psi)) \\
&=d_H(F_1(X)\cup q^{-1}(SF_n(f)^m(\chi)),F_1(X)\cup q^{-1}(SF_n(f)^m(\psi))) \\
&=d_H(F_1(X)\cup q^{-1}(SF_n(f)^m(q(A))),F_1(X)\cup q^{-1}(SF_n(f)^m(q(B)))) \\
&=d_H(F_1(X)\cup F_n(f)^m(q^{-1}q(A)),F_1(X)\cup F_n(f)^m(q^{-1}q(B))) \\
&=d_H(F_1(X)\cup F_n(f)^m(A),F_1(X)\cup F_n(f)^m(B)) \\
&\leq d_H(F_n(f)^m(A),F_n(f)^m(B)).
\end{aligned}
\end{equation*}
This implies that $F_n(f)$ is sensitive.

Suppose that $F_n(f)$ is sensitive with sensitivity constant $\delta$. We see that $f$ is also sensitive with sensitivity constant $\delta$. For this end, let $\varepsilon>0$ be arbitrary. Let $x\in X$ and $U$ be the $\varepsilon$-neighborhood of $x$ in $X$. Then, as $\mathcal{U}=B_{d_H}(\{x\},\varepsilon)$ is an $\varepsilon$-neighborhood of $\{x\}$ in $F_n(X)$ and $F_n(f)$ is sensitive, there exist $A\in\mathcal{U}$ and $m\in\mathbb{N}$ such that $d_H(F_n(f)^m(A),F_n(f)^m(\{x\}))>\delta$. Hence, there exists $y\in A\subseteq U$ such that $d(f^m(y),f^m(x))>\delta$. This implies that $f$ is sensitive.

\end{proof}

\begin{theorem}
Let $X$ be a continuum, let $n$ be an integer greater than or equal to two, and let $f: X \rightarrow X$ be a map. Consider the following statements:

(1) $f$ is cofinitely sensitive;

(2)$F_n(f)$ is cofinitely sensitive;

(3)$SF_n(f)$ is cofinitely sensitive.

Then (1) and (2) are equivalent, and (3) implies (2).
\end{theorem}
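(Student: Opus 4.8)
\emph{Proof proposal.} The statement packages three implications — (1)$\Rightarrow$(2), (2)$\Rightarrow$(1) and (3)$\Rightarrow$(2) — of which only the first is delicate. For (3)$\Rightarrow$(2) and (2)$\Rightarrow$(1) the plan is simply to rerun the two halves of the proof of the preceding theorem, now carrying the threshold $N$ through the estimates. Thus for (3)$\Rightarrow$(2): if $SF_n(f)$ is cofinitely sensitive with constant $\delta$ and threshold $N$, then, given a nonempty open $\mathcal U\subseteq F_n(X)$ and $m>N$, I take $\langle U_1,\dots,U_n\rangle_n\subseteq\mathcal U$ via \cite[Lemma 4.2]{a4}, shrink the $U_i$ to pairwise disjoint $W_i$ so that $\langle W_1,\dots,W_n\rangle_n\cap F_1(X)=\emptyset$, pass to the nonempty open set $q(\langle W_1,\dots,W_n\rangle_n)\subseteq SF_n(X)$ which misses $F_X$, apply cofinite sensitivity of $SF_n(f)$ at this single iterate $m$ to obtain $\chi,\psi$ there with $\rho_X^n(SF_n(f)^m(\chi),SF_n(f)^m(\psi))>\delta$, and then run the same chain of Hausdorff-metric identities as in the previous proof:
\begin{multline*}
\delta<\rho_X^n\big(SF_n(f)^m(\chi),SF_n(f)^m(\psi)\big)=d_H\big(F_1(X)\cup F_n(f)^m(A),\,F_1(X)\cup F_n(f)^m(B)\big)\\
\le d_H\big(F_n(f)^m(A),F_n(f)^m(B)\big),
\end{multline*}
where $A=q^{-1}(\chi)$, $B=q^{-1}(\psi)$ lie in $\mathcal U$; the same $\delta$ and $N$ then serve for $F_n(f)$. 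For (2)$\Rightarrow$(1): given a nonempty open $U\subseteq X$ and $m>N$, pick $x\in U$ and $\varepsilon>0$ with $B(x,\varepsilon)\subseteq U$, apply cofinite sensitivity of $F_n(f)$ to $\mathcal U=B_{d_H}(\{x\},\varepsilon)$ at the iterate $m$ to get $\mathcal A,\mathcal B\in\mathcal U$ with $d_H(F_n(f)^m(\mathcal A),F_n(f)^m(\mathcal B))>\delta$, extract a point $u\in\mathcal A$ with $d(f^m(u),f^m(v))>\delta$ for all $v\in\mathcal B$, and note that $u$ together with any such $v$ lies in $B(x,\varepsilon)\subseteq U$; again $\delta$ and $N$ are unchanged.

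The content is (1)$\Rightarrow$(2). Here the naive transfer fails: if I perturb one coordinate $p_1$ of a transversal $\{p_1,\dots,p_n\}$ to some $a_1\in U_1$ with $d(f^m(a_1),f^m(p_1))>\delta$, the Hausdorff distance between $\{f^m(a_1),f^m(p_2),\dots,f^m(p_n)\}$ and $\{f^m(p_1),\dots,f^m(p_n)\}$ can still be negligible, because $f^m(a_1)$ may land near one of the $f^m(p_j)$, $j\ge2$. To get around this I would use the two features separating cofinite from plain sensitivity: the \emph{uniform} threshold $N$, letting me work at a single iterate $m>N$ at which \emph{every} nonempty open set is simultaneously $\delta$-stretched, and a Baire-category argument on $X$ (a continuum is completely metrizable, hence Baire). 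Concretely, assuming $f$ is cofinitely sensitive with constant $\delta$ and threshold $N$, I claim $F_n(f)$ is cofinitely sensitive with constant $\delta/3$ and threshold $N$. Fix a nonempty open $\mathcal U\subseteq F_n(X)$ and $m>N$, choose $\langle U_1,\dots,U_n\rangle_n\subseteq\mathcal U$ and points $p_j\in U_j$, and consider the closed subsets $C_j=\{a\in U_1:d(f^m(a),f^m(p_j))\le\delta/3\}$ of $U_1$, $j=1,\dots,n$. Were $U_1=\bigcup_{j=1}^nC_j$, some $C_j$ would have nonempty interior, yielding a nonempty open $V\subseteq U_1$ with $\operatorname{diam}f^m(V)\le2\delta/3<\delta$, which contradicts cofinite sensitivity of $f$ at the iterate $m>N$; hence there is $a_1\in U_1$ with $d(f^m(a_1),f^m(p_j))>\delta/3$ for every $j$. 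Then $\mathcal A=\{a_1,p_2,\dots,p_n\}$ and $\mathcal B=\{p_1,\dots,p_n\}$ belong to $\langle U_1,\dots,U_n\rangle_n\subseteq\mathcal U$, and since $d(f^m(a_1),F_n(f)^m(\mathcal B))=\min_{1\le j\le n}d(f^m(a_1),f^m(p_j))>\delta/3$ we obtain $d_H(F_n(f)^m(\mathcal A),F_n(f)^m(\mathcal B))>\delta/3$. As $\mathcal U$ and $m>N$ were arbitrary, (1)$\Rightarrow$(2) follows.

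The main obstacle is precisely this last step — promoting ``one coordinate is displaced by $\delta$'' to ``the transversal is displaced by a fixed amount in the Hausdorff metric''. Plain sensitivity does not seem strong enough for it, which is presumably why the previous theorem claimed only (3)$\Rightarrow$(2)$\Rightarrow$(1) for that property; what rescues the cofinite version is that the uniform threshold pins down a single iterate at which all the sets $f^m(V)$ are $\delta$-large at once, and Baire category then converts this into one point of $U_1$ whose image misses the finitely many $\delta/3$-balls about $f^m(p_1),\dots,f^m(p_n)$. In the write-up I would verify that any constant strictly below $\delta/2$ (I used $\delta/3$) and the reuse of $N$ are compatible with the definition, and that \cite[Lemma 4.2]{a4} does deliver a basic neighbourhood with at most $n$ coordinates, so that $\{a_1,p_2,\dots,p_n\}$ is a genuine member of $F_n(X)$.
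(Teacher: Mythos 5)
Your proposal is correct, and two of its three parts follow the paper's route: your (3)$\Rightarrow$(2) is the paper's argument verbatim (shrink to $\langle W_1,\dots,W_n\rangle_n$ missing $F_1(X)$, push forward by $q$, apply cofinite sensitivity of $SF_n(f)$ at a fixed $m>N$, and run the same chain $\rho_X^n=\cdots\le d_H(F_n(f)^m(A),F_n(f)^m(B))$), and your (2)$\Rightarrow$(1) is the paper's argument with a small improvement in rigor, since the paper tacitly anchors the sensitive pair at the singleton $\{x\}$ while you take two arbitrary elements of $B_{d_H}(\{x\},\varepsilon)$ and extract a point of one whose image is $\delta$-far from the entire image of the other.

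The genuine divergence is in (1)$\Rightarrow$(2), and there your argument is different from, but just as valid as, the paper's. The paper does not use any category or covering argument: writing $A=\{x_1,\dots,x_r\}$, it perturbs \emph{every} coordinate, picking $y_i\in B(x_i,\varepsilon)$ with $d(f^m(x_i),f^m(y_i))>\delta$, and then sets $C=\{z_1,\dots,z_r\}$ with $z_i=y_i$ exactly when $d(f^m(x_1),f^m(x_i))\le\delta/2$ and $z_i=x_i$ otherwise; the triangle inequality then gives $d(f^m(x_1),f^m(z_i))>\delta/2$ for all $i$, so $f^m(x_1)$ is $\delta/2$-far from the whole set $f^m(C)$ and $d_H(F_n(f)^m(A),F_n(f)^m(C))>\delta/2$. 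Thus the obstruction you identify (the displaced image landing near some other $f^m(p_j)$) is neutralized in the paper by perturbing precisely those coordinates whose images cluster near $f^m(x_1)$, whereas you perturb a single coordinate and choose the perturbation $a_1$ so that $f^m(a_1)$ avoids all the closed $\delta/3$-balls about the $f^m(p_j)$, via the finite closed cover of $U_1$ and cofinite sensitivity at the fixed iterate $m>N$. Your route buys a cleaner handling of the fact that the definition only supplies a \emph{pair} of points in each open set rather than a displacement from a prescribed center (the paper glosses over this, as well as over the dependence of the pair on $m$; both are fixable by one more triangle inequality at the cost of a factor $2$ in the constant), and it makes explicit where the uniform threshold $N$ is used; the paper's route buys an elementary coordinate-wise metric argument with no covering step, and note that your step needs no Baire theorem anyway, since a finite union of closed sets with empty interior has empty interior in any space. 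The differing constants ($\delta/2$, or $\delta/4$ after the fix, versus your $\delta/3$) and the reuse of $N$ are immaterial to the statement.
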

\begin{proof}

Suppose that $F_n(f)$ is cofinitely sensitive with sensitivity constant $\delta$. We see that $f$ is also cofinitely sensitive with sensitivity constant $\delta$. For this end, let $\varepsilon>0$ be arbitrary. Let $x\in X$ and $U$ be the $\varepsilon$-neighborhood of $x$ in $X$. Then, as $\mathcal{U}=B_{d_H}(\{x\},\varepsilon)$ is an $\varepsilon$-neighborhood of $\{x\}$ in $F_n(X)$ and $F_n(f)$ is sensitive, there exist $A\in\mathcal{U}$ and $M\in\mathbb{N}$ such that $d_H(F_n(f)^m(A),F_n(f)^m(\{x\}))>\delta$ for any $m\geq M$. Hence, there exists $y\in A\subseteq U$ such that $d(f^m(y),f^m(x))>\delta$ for any $m\geq M$. This implies that $f$ is cofinitely sensitive.

Suppose that $f$ is cofinitely sensitive with sensitivity constant $\delta$. We see that $F_n(f)$ is cofinitely sensitive with sensitivity constant $\frac{\delta}{2}$. For this end, let $\varepsilon>0$ be arbitrary. Let $A=\{x_1,...,x_r\}\in F_n(X)$ and $B_{d_H}(A,\varepsilon)$ be the $\varepsilon$-neighborhood of $A$ in $F_n(X)$, where $r\leq n$. As $f$ is cofinite sensitive, for each $i=1,2,...,r$, there exist $y_i\in B(x_i,\varepsilon)$ and $m_i\in \mathbb{N}$ such that $d(f^{m_i}(x_i),f^{m_i}(y_i))>\delta$ for any $m\geq m_i$. Put $M=max\{m_i:1\leq i\leq r\}$. We will show that there exists $C\in B_{d_H}(A,\varepsilon)$ such that $d_H(F_n(f)^m(A),F_n(f)^m(C))>\frac{\delta}{2}$ for any $m\geq M$. Let $C=\{z_1,z_2,...,z_r\}$ where
\[z_i=\begin{cases}
y_i, &  d(f^m(x_1),f^m(x_i))\leq\frac{\delta}{2}, \\
x_i, &  otherwise,
\end{cases}\]
where $m\geq M$. Then it is easy to see that $d(f^m(x_1),f^m(z_i))>\frac{\delta}{2}$ for each $i$ and hence $d_H(F_n(f)^m(A),F_n(f)^m(C))>\frac{\delta}{2}$ for any $m\geq M$. Therefore, $F_n(f)$ is also cofinite sensitive.

Suppose that $SF_n(f)$ is cofinitely sensitive with $\delta>0$ as a constant of sensitivity. We prove that $F_n(f)$ is cofinitely sensitive with $\delta>0$ as a constant of sensitivity. Let $\mathcal{U}$ be nonempty open subset of $F_n(X)$. By \cite[Lemma4.2]{a4}, there exist nonempty open subsets $U_1,U_2,...,U_n$ of $X$ such that $\langle U_1,U_2,...,U_n\rangle_n\subseteq \mathcal{U}$. For every $i\in\{1,2,...,n\}$, let $W_i$ be a nonempty open subset of $X$ such that $W_i\subseteq U_i$ and for any $i,j\in\{1,2,...,n\}$, $W_i\cap W_j\neq\emptyset$ if $i\neq j$. Note that $\langle U_1,U_2,...,U_n\rangle_n$ is nonempty open subsets of $F_n(X)$ such that $\langle W_1,W_2,...,W_n\rangle_n\subseteq \langle U_1,U_2,...,U_n\rangle_n\subseteq \mathcal{U}$ and $\langle W_1,W_2,...,W_n\rangle_n\cap F_1(X)=\emptyset$. By Remark \ref{remark2}, $q(\langle W_1,W_2,...,W_n\rangle_n)$ is nonempty open subsets of $SF_n(X)$ and $F_X\notin q(\langle W_1,W_2,...,W_n\rangle_n)$. Since $SF_n(f)$ be  cofinitely sensitive, there exist $\chi,\psi\in q(\mathcal{K}), M\in\mathbb{N}$, such that for any $m\geq M$
$$\rho_X^n(SF_n(f)^m(\chi),SF_n(f)^m(\psi))>\delta.$$
Put $A=q^{-1}(\chi),B=q^{-1}(\psi)$. Then we have $\chi=q(A),\psi=q(B),A,B\in\mathcal{U}$. Therefore,

\begin{equation*}
\begin{aligned}
\delta&<\rho_X^n(SF_n(f)^m(\chi),SF_n(f)^m(\psi)) \\
&=d_H(F_1(X)\cup q^{-1}(SF_n(f)^m(\chi)),F_1(X)\cup q^{-1}(SF_n(f)^m(\psi))) \\
&=d_H(F_1(X)\cup q^{-1}(SF_n(f)^m(q(A))),F_1(X)\cup q^{-1}(SF_n(f)^m(q(B)))) \\
&=d_H(F_1(X)\cup F_n(f)^m(q^{-1}q(A)),F_1(X)\cup F_n(f)^m(q^{-1}q(B))) \\
&=d_H(F_1(X)\cup F_n(f)^m(A),F_1(X)\cup F_n(f)^m(B)) \\
&\leq d_H(F_n(f)^m(A),F_n(f)^m(B)),
\end{aligned}
\end{equation*}
which implies that $F_n(f)$ is cofinitely sensitive.

\end{proof}

\begin{theorem}
Let $X$ be a continuum, let $n$ be an integer greater than or equal to two, and let $f: X \rightarrow X$ be a map. Consider the following statements:

(1) $f$ is multi-sensitive;

(2)$F_n(f)$ is multi-sensitive;

(3)$SF_n(f)$ is multi-sensitive.

Then (1) and (2) are equivalent, and (3) implies (2).
\end{theorem}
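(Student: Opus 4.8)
The plan is to run, over the finite family of open sets occurring in the definition of multi-sensitivity, the same constructions used for Theorems 3.1 and 3.2, arranging that a single common witness time $k$ survives each transfer. For the implication (3)$\Rightarrow$(2), assume $SF_n(f)$ is multi-sensitive with constant $\delta$, and let $\mathcal{U}_1,\dots,\mathcal{U}_m$ be nonempty open subsets of $F_n(X)$. For each $j$, use \cite[Lemma 4.2]{a4} to pick nonempty open $U_1^j,\dots,U_n^j\subseteq X$ with $\langle U_1^j,\dots,U_n^j\rangle_n\subseteq\mathcal{U}_j$, and then choose nonempty open $W_i^j\subseteq U_i^j$ exactly as in the proof of Theorem 3.1, so that $\langle W_1^j,\dots,W_n^j\rangle_n\subseteq\mathcal{U}_j$ and $\langle W_1^j,\dots,W_n^j\rangle_n\cap F_1(X)=\emptyset$. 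By Remark \ref{remark2}, each $q(\langle W_1^j,\dots,W_n^j\rangle_n)$ is nonempty open in $SF_n(X)$ and omits $F_X$. Feeding these $m$ sets to the multi-sensitivity of $SF_n(f)$ produces one $k$ such that for every $j$ there are $\chi_j,\psi_j\in q(\langle W_1^j,\dots,W_n^j\rangle_n)$ with $\rho_X^n((SF_n(f))^k(\chi_j),(SF_n(f))^k(\psi_j))>\delta$. Since $q$ is injective off $F_1(X)$, put $A_j=q^{-1}(\chi_j)$ and $B_j=q^{-1}(\psi_j)$, both in $\mathcal{U}_j$; the chain of (in)equalities in the proof of Theorem 3.1 (using Remark \ref{remark1} and $d_H(F_1(X)\cup K,F_1(X)\cup L)\le d_H(K,L)$) then yields $d_H((F_n(f))^k(A_j),(F_n(f))^k(B_j))>\delta$ for all $j$. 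Hence the single $k$ witnesses sensitivity on every $\mathcal{U}_j$, so $F_n(f)$ is multi-sensitive with the same $\delta$.

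For (2)$\Rightarrow$(1), assume $F_n(f)$ is multi-sensitive with constant $\delta$ and let $U_1,\dots,U_m$ be nonempty open in $X$. The basic open sets $\langle U_j\rangle_n=\{A\in F_n(X):A\subseteq U_j\}$ are nonempty and open in $F_n(X)$, so multi-sensitivity of $F_n(f)$ yields a common $k$ and, for each $j$, sets $A_j,B_j\subseteq U_j$ with $d_H((F_n(f))^k(A_j),(F_n(f))^k(B_j))>\delta$. By the point-to-set description of the Hausdorff metric there is, without loss of generality, a point $a\in A_j$ with $d(f^k(a),f^k(b))>\delta$ for every $b\in B_j$; taking any $b\in B_j$, the pair $a,b\in U_j$ shows $k\in N(U_j,\delta)$. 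Since $j$ was arbitrary, $k\in\bigcap_{j=1}^m N(U_j,\delta)$, so $f$ is multi-sensitive with constant $\delta$.

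For (1)$\Rightarrow$(2), assume $f$ is multi-sensitive with constant $\delta$ and let $\mathcal{U}_1,\dots,\mathcal{U}_m$ be nonempty open in $F_n(X)$. Choose $\langle U_1^j,\dots,U_n^j\rangle_n\subseteq\mathcal{U}_j$ by \cite[Lemma 4.2]{a4}, and apply the multi-sensitivity of $f$ to the $nm$ open sets $\{U_i^j:1\le i\le n,\ 1\le j\le m\}$: this gives one $k$ together with points $a_i^j,b_i^j\in U_i^j$ with $d(f^k(a_i^j),f^k(b_i^j))>\delta$. For each $j$, set $A_j=\{a_1^j,\dots,a_n^j\}\in\langle U_1^j,\dots,U_n^j\rangle_n$ and, imitating the case-split in the proof of Theorem 3.2, set $C_j=\{c_1^j,\dots,c_n^j\}$ with $c_i^j=b_i^j$ when $d(f^k(a_1^j),f^k(a_i^j))\le\delta/2$ and $c_i^j=a_i^j$ otherwise. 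Then $C_j\in\langle U_1^j,\dots,U_n^j\rangle_n\subseteq\mathcal{U}_j$, and in either case $d(f^k(a_1^j),f^k(c_i^j))>\delta/2$ for every $i$; since $f^k(a_1^j)\in(F_n(f))^k(A_j)$, this forces $d_H((F_n(f))^k(A_j),(F_n(f))^k(C_j))>\delta/2$. Thus $k\in\bigcap_{j=1}^m N_{F_n(f)}(\mathcal{U}_j,\delta/2)$ and $F_n(f)$ is multi-sensitive with constant $\delta/2$.

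The Hausdorff-metric manipulations are routine and already appear in Theorems 3.1 and 3.2. The one delicate point is again the one met in Theorem 3.2: deducing a Hausdorff-distance lower bound for the $n$-point sets $(F_n(f))^k(A_j)$ and $(F_n(f))^k(C_j)$ from mere pointwise separation of the $a_i^j$ and $b_i^j$ — a careless choice of $C_j$ need not stay bounded away from $A_j$, which is precisely why the threshold-$\delta/2$ case-split is used and why the multi-sensitivity constant of $F_n(f)$ drops to $\delta/2$ in (1)$\Rightarrow$(2). One should also verify, just as in Theorem 3.1, that the chain of equalities in (3)$\Rightarrow$(2) is unaffected when $(F_n(f))^k(A_j)$ happens to lie in $F_1(X)$, since the two sides $F_1(X)\cup q^{-1}(\cdot)$ still coincide there. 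The converse implication (2)$\Rightarrow$(3) is not to be expected, consistently with the phenomenon illustrated by the irrational-rotation Example.
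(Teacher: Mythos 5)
Your proposal is correct and follows essentially the same route as the paper: the same $(3)\Rightarrow(2)$ transfer through $q$ and the Vietoris sets $\langle W_1,\dots,W_n\rangle_n$ with the chain of Hausdorff-metric (in)equalities, and the same $\delta/2$ case-split construction for $(1)\Rightarrow(2)$, with a common witness time $k$ tracked across the finitely many open sets throughout. The only differences are cosmetic (you work with basic Vietoris open sets and a WLOG point extraction where the paper uses Hausdorff $\varepsilon$-balls around singletons), so no further comparison is needed.
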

\begin{proof}

Suppose that $F_n(f)$ is multi-sensitive with sensitivity constant $\delta$. We see that $f$ is also multi-sensitive with sensitivity constant $\delta$. For this end, let $\varepsilon>0$ and $m\in \mathbb{N}$ be arbitrary. Let $x_i\in X$ and $U_i$ be the $\varepsilon$-neighborhood of $x_i$ in $X$, where $i=1,2,...,m$. Then, as $\mathcal{U}_i=B_{d_H}(\{x_i\},\varepsilon)$ is an $\varepsilon$-neighborhood of $\{x_i\}$ in $F_n(X)$ and $F_n(f)$ is multi-sensitive, there exist $A_i\in\mathcal{U}_i$ and $k\in\mathbb{N}$ such that $d_H(F_n(f)^{k}(A),F_n(f)^{k}(\{x_i\}))>\delta$ for every $i=1,2,...,m$. Hence, there exists $y_i\in A_i\subseteq U_i$ such that $d(f^k(y_i),f^k(x_i))>\delta$ for every $i=1,2,...,m$. This implies that $f$ is multi-sensitive.

Suppose that $f$ is multi-sensitive with sensitivity constant $\delta$. We see that $F_n(f)$ is multi-sensitive with sensitivity constant $\frac{\delta}{2}$. For this end, let $\varepsilon>0$ and $m\in \mathbb{N}$  be arbitrary. Let $A_i=\{x_{i1},...,x_{ir_i}\}\in F_n(X)$ and $B_{d_H}(A_i,\varepsilon)$ be the $\varepsilon$-neighborhood of $A_i$ in $F_n(X)$, where $i=1,2,...,m$ and $r_i\leq n$ . As $f$ is multi-sensitive, for each $i=1,2,...,m$ and each $j=1,2,...,r_i$, there exist $y_{ij}\in B(x_{ij},\varepsilon)$ and $k\in \mathbb{N}$ such that $d(f^k(x_{ij}),f^k(y_{ij}))>\delta$. Next we will show that there exists $C_i\in B_{d_H}(A_i,\varepsilon)$ such that $d_H(F_n(f)^k(A_i),F_n(f)^k(C_i))>\frac{\delta}{2}$ for each $i=1,2,...,m$. Let $C_i=\{z_{i1},z_{i2},...,z_{ir_i}\}$ where
\[z_{ij}=\begin{cases}
y_{ij}, &  d(f^m(x_{i1}),f^m(x_{ij}))\leq\frac{\delta}{2}, \\
x_{ij}, &  otherwise,
\end{cases}\]
$i=1,2,...,m$ and $j=1,2,...,r_i$.
 Then it is easy to see that $d(f^k(x_{i1}),f^k(z_{ij}))>\frac{\delta}{2}$ for each $i=1,2,...,m$ and each $j=1,2,...,r_i$. Hence $d_H(F_n(f)^k(A_i),F_n(f)^k(C_i))>\frac{\delta}{2}$ for each $i=1,2,...,m$. Therefore, $F_n(f)$ is also multi-sensitive.

Suppose that $SF_n(f)$ is multi-sensitive with $\delta>0$ as a constant of sensitivity. We prove that $F_n(f)$ is multi-sensitive with $\delta>0$ as a constant of sensitivity. Let $m\in \mathbb{N}$  be arbitrary. Let $\mathcal{U}_i$ be nonempty open subset of $F_n(X)$, where $i=1,2,...,m$. By \cite[Lemma4.2]{a4}, there exist nonempty open subsets $U_{i1},U_{i2},...,U_{in}$ of $X$ such that $\langle U_{i1},U_{i2},...,U_{in}\rangle_n\subseteq \mathcal{U}_i$ for each $i\in\{1,2,...,m\}$. For every $i\in\{1,2,...,m\}$ and every $j\in\{1,2,...,n\}$, let $W_{ij}$ be a nonempty open subset of $X$ such that $W_{ij}\subseteq U_{ij}$ and for any $j,k\in\{1,2,...,n\}$, $W_{ij}\cap W_{ik}\neq\emptyset$ if $j\neq k$. Note that $\langle U_{i1},U_{i2},...,U_{in}\rangle_n$ is nonempty open subsets of $F_n(X)$ such that $\langle W_{i1},W_{i2},...,W_{in}\rangle_n\subseteq \langle U_{i1},U_{i2},...,U_{in}\rangle_n\subseteq \mathcal{U}_i$ and $\langle W_{i1},W_{i2},...,W_{in}\rangle_n\cap F_1(X)=\emptyset$ for each $i\in\{1,2,...,m\}$. By Remark \ref{remark2}, $q(\langle W_{i1},W_{i2},...,W_{in}\rangle_n)$ is nonempty open subsets of $SF_n(X)$ and $F_X\notin q(\langle W_{i1},W_{i2},...,W_{in}\rangle_n)$. Since $SF_n(f)$ be multi-sensitive, there exist $\chi_i,\psi_i\in q(\langle W_{i1},W_{i2},...,W_{in}\rangle_n)$ and $k\in\mathbb{N}$ such that
$$\rho_X^n(SF_n(f)^k(\chi_i),SF_n(f)^k(\psi_i))>\delta$$ for each $i\in\{1,2,...,m\}$.
Put $A_i=q^{-1}(\chi_i),B_i=q^{-1}(\psi_i)$. Then we have $\chi_i=q(A_i),\psi_i=q(B_i),A_i,B_i\in \mathcal{U}_i$. Therefore,

\begin{equation*}
\begin{aligned}
\delta&<\rho_X^n(SF_n(f)^k(\chi_i),SF_n(f)^k(\psi_i)) \\
&=d_H(F_1(X)\cup q^{-1}(SF_n(f)^k(\chi_i)),F_1(X)\cup q^{-1}(SF_n(f)^k(\psi_i))) \\
&=d_H(F_1(X)\cup q^{-1}(SF_n(f)^k(q(A_i))),F_1(X)\cup q^{-1}(SF_n(f)^k(q(B_i)))) \\
&=d_H(F_1(X)\cup F_n(f)^k(q^{-1}q(A_i)),F_1(X)\cup F_n(f)^k(q^{-1}q(B_i))) \\
&=d_H(F_1(X)\cup F_n(f)^k(A_i),F_1(X)\cup F_n(f)^k(B_i)) \\
&\leq d_H(F_n(f)^k(A_i),F_n(f)^k(B_i)).
\end{aligned}
\end{equation*}
This implies that $F_n(f)$ is multi-sensitive.

\end{proof}

In the following Theorem 4-Theorem 13, we study the relationships between the following statements: (1) $f\in \mathcal{M}$,(2) $F_n(f)\in \mathcal{M}$, and (3)$SF_n(f)\in \mathcal{M}$, where $\mathcal{M}$ is one of the following classes of map: Z-transitive, quasi-periodic, accessible, indecomposable, multi-transitive, $\bigtriangleup$-transitive, $\bigtriangleup$-mixing, Martelli's chaos.

\begin{theorem}
Let $X$ be a continuum, let $n$ be an integer greater than or equal to two, and let $f: X \rightarrow X$ be a map. Consider the following statements:

(1) $f$ is Z-transitive;

(2)$F_n(f)$ is Z-transitive;

(3)$SF_n(f)$ is Z-transitive.

Then (2) and (3) are equivalent, (2) implies (1), but (1) does not imply (2).
\end{theorem}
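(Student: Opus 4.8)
The scheme is to prove $(2)\Leftrightarrow(3)$ by transferring open sets and orbit incidences back and forth across the quotient $q\colon F_n(X)\to SF_n(X)$, to get $(2)\Rightarrow(1)$ by inflating open sets of $X$ into open sets of $F_n(X)$, and to see that $(1)$ does not imply $(2)$ using the irrational rotation of $S^1$ considered in the Example above. For $(2)\Rightarrow(3)$: given nonempty open $\mathcal{A},\mathcal{B}\subseteq SF_n(X)$, continuity and surjectivity of $q$ make $q^{-1}(\mathcal{A}),q^{-1}(\mathcal{B})$ nonempty open in $F_n(X)$, so $\mathbb{Z}$-transitivity of $F_n(f)$ supplies $k\in\mathbb{Z}$ with $F_n(f)^{k}(q^{-1}(\mathcal{A}))\cap q^{-1}(\mathcal{B})\neq\emptyset$. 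From $q\circ F_n(f)^{j}=(SF_n(f))^{j}\circ q$ (Remark \ref{remark1}) one obtains, by taking preimages, $F_n(f)^{-j}(q^{-1}(\Gamma))=q^{-1}((SF_n(f))^{-j}(\Gamma))$ for every $\Gamma\subseteq SF_n(X)$; applying $q$ (when $k\geq0$) or this preimage identity (when $k<0$) to a point of the intersection above, together with $q(q^{-1}(\cdot))=(\cdot)$, gives $(SF_n(f))^{k}(\mathcal{A})\cap\mathcal{B}\neq\emptyset$, so $SF_n(f)$ is $\mathbb{Z}$-transitive.

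For $(3)\Rightarrow(2)$: given nonempty open $\mathcal{U},\mathcal{V}\subseteq F_n(X)$, I would repeat the reduction from the proof of Theorem 1: using \cite[Lemma4.2]{a4} and the fact that $X$ is perfect, shrink $\mathcal{U}$ and $\mathcal{V}$ to basic open sets $\langle W_1,\dots,W_n\rangle_n\subseteq\mathcal{U}$ and $\langle W_1',\dots,W_n'\rangle_n\subseteq\mathcal{V}$ with $\langle W_1,\dots,W_n\rangle_n\cap F_1(X)=\emptyset=\langle W_1',\dots,W_n'\rangle_n\cap F_1(X)$. By Remark \ref{remark2}, $q$ restricts to a homeomorphism $F_n(X)\setminus F_1(X)\to SF_n(X)\setminus\{F_X\}$, so $q(\langle W_1,\dots,W_n\rangle_n)$ and $q(\langle W_1',\dots,W_n'\rangle_n)$ are nonempty open in $SF_n(X)$ and omit $F_X$. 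Apply $\mathbb{Z}$-transitivity of $SF_n(f)$ to produce $k\in\mathbb{Z}$ and a witness point lying in the $k$-th iterate of one of these sets and in the other. Its $q$-preimage is a single $A\in F_n(X)\setminus F_1(X)$, and since $q$ is injective off $F_1(X)$, $A$ belongs to the relevant basic open set, so $A\in\mathcal{U}$ (or $A\in\mathcal{V}$); as the target neighborhood omits $F_X$, the image $F_n(f)^{k}(A)$ omits $F_1(X)$ too and hence (again by injectivity) lies in the other basic set, so $F_n(f)^{k}(A)\in\mathcal{V}$ (resp. $\mathcal{U}$). This yields $k\in\mathbb{Z}$ with $F_n(f)^{k}(\mathcal{U})\cap\mathcal{V}\neq\emptyset$; thus $F_n(f)$ is $\mathbb{Z}$-transitive.

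For $(2)\Rightarrow(1)$: given nonempty open $U,V\subseteq X$, the sets $\langle U\rangle_n=\{A\in F_n(X):\emptyset\neq A\subseteq U\}$ and $\langle V\rangle_n$ are nonempty open in $F_n(X)$, so $\mathbb{Z}$-transitivity of $F_n(f)$ gives $k\in\mathbb{Z}$ and a set $A$ with $\emptyset\neq A\subseteq U$ and $f^{k}(A)\subseteq V$ (or the symmetric statement when $k<0$); any $x\in A$ then satisfies $x\in U$ and $f^{k}(x)\in V$, so $f$ is $\mathbb{Z}$-transitive. To refute $(1)\Rightarrow(2)$, let $f$ be the irrational rotation of $S^1$ from the Example above; it is minimal, hence $\mathbb{Z}$-transitive. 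Now $F_n(f)$ is a homeomorphism of $F_n(S^1)$ that, for every $A\in F_n(S^1)$, preserves the finite multiset of pairwise arc-distances among the points of $A$ (a rotation is an isometry for the arc metric). Fix a small $\varepsilon>0$ and take $\mathcal{U}=B_{d_H}(\{1,-1\},\varepsilon)$ and $\mathcal{V}=B_{d_H}(\{1,i\},\varepsilon)$ in $F_n(S^1)$: every member of $\mathcal{U}$ has two of its points at arc-distance within $\varepsilon$ of $\pi$, whereas every member of $\mathcal{V}$ has all pairwise arc-distances within $\varepsilon$ of $0$ or of $\pi/2$. Since this data is invariant under $F_n(f)^{k}$ for all $k\in\mathbb{Z}$, we get $F_n(f)^{k}(\mathcal{U})\cap\mathcal{V}=\emptyset$ for every $k$, so $F_n(f)$ is not $\mathbb{Z}$-transitive.

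I expect the bookkeeping in $(3)\Rightarrow(2)$ to be the main obstacle: one must shrink to basic open sets that genuinely avoid $F_1(X)$ so that $q$ is injective there, handle both signs of $k$ correctly — recalling that for $k<0$ the symbol $g^{k}(U)$ means the preimage $(g^{|k|})^{-1}(U)$ — and, crucially, verify that the iterate $F_n(f)^{k}(A)$ is still off $F_1(X)$ before invoking injectivity of $q$, which is precisely where the hypothesis that the target neighborhood omits $F_X$ enters. A lesser point to check in the counterexample is that the pairwise-distance invariant still separates $\mathcal{U}$ from $\mathcal{V}$ even for configurations carrying more than two points.
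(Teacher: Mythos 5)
Your proposal is correct, and for the equivalence $(2)\Leftrightarrow(3)$ and for $(2)\Rightarrow(1)$ it follows essentially the paper's route: the $(3)\Rightarrow(2)$ direction is the same shrinking argument (basic open sets $\langle W_1,\dots,W_n\rangle_n$ missing $F_1(X)$, push forward by $q$ via Remark \ref{remark2}, pull the witness back by injectivity of $q$ off $F_1(X)$), and $(2)\Rightarrow(1)$ is the same $\langle U\rangle_n$, $\langle V\rangle_n$ trick. In $(2)\Rightarrow(3)$ you deviate slightly but advantageously: instead of shrinking $q^{-1}(\Gamma)$ off $F_1(X)$ as the paper does, you derive the set-level identity $F_n(f)^{-j}(q^{-1}(\Gamma))=q^{-1}\bigl(SF_n(f)^{-j}(\Gamma)\bigr)$ directly from the semiconjugacy $q\circ F_n(f)=SF_n(f)\circ q$ of Remark \ref{remark1}; this needs no injectivity and handles negative exponents cleanly, which is arguably tidier than the paper's treatment. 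The genuine divergence is in showing $(1)$ does not imply $(2)$: the paper first proves Theorem \ref{Thz} (via Lemma \ref{yinli2}, $F_n(f)$ is $\mathbb{Z}$-transitive iff $f$ is weakly mixing) and then quotes \cite[Theorem 4.10]{a5} for the irrational rotation, whereas you argue directly that for the irrational rotation the induced map preserves the multiset of pairwise arc-distances and hence the Hausdorff balls $B_{d_H}(\{1,-1\},\varepsilon)$ and $B_{d_H}(\{1,i\},\varepsilon)$ can never be connected by any integer iterate. Your counterexample is self-contained and elementary (and correctly covers elements with more than two points, since every member of the first ball contains a pair at arc-distance within $2\varepsilon$ of $\pi$ while no member of the second does --- note the constant should be $2\varepsilon$ rather than $\varepsilon$, a harmless adjustment), but it is specific to the rotation; the paper's detour through Theorem \ref{Thz} buys the stronger structural characterization of $\mathbb{Z}$-transitivity of $F_n(f)$ as weak mixing of $f$, which it reuses elsewhere.
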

\begin{proof}
Suppose that $F_n(f)$ is Z-transitive, we prove that $SF_n(f)$ is Z-transitive. To this end, Let $\Gamma,\Lambda$  be nonempty open subsets of $SF_n(X)$. Since $q$ is continuous, $q^{-1}(\Gamma),q^{-1}(\Lambda)$ are nonempty open subsets of $F_n(X)$. By \cite[Lemma4.2]{a4}, there exist nonempty open subsets $U_1,U_2,...,U_n$ of $X$ such that $\langle U_1,U_2,...,U_n\rangle_n\subseteq q^{-1}(\Gamma)$. For every $i\in\{1,2,...,n\}$, let $W_i$ be a nonempty open subset of $X$ such that $W_i\subseteq U_i$ and for any $i,j\in\{1,2,...,n\}$, $W_i\cap W_j\neq\emptyset$ if $i\neq j$. Note that $\langle U_1,U_2,...,U_n\rangle_n$ is nonempty open subsets of $F_n(X)$ such that $\langle W_1,W_2,...,W_n\rangle_n\subseteq \langle U_1,U_2,...,U_n\rangle_n\subseteq \mathcal{U}$, $\langle W_1,W_2,...,W_n\rangle_n\cap F_1(X)=\emptyset$. Since $F_n(f)$ is Z-transitive, there exists $k\in\mathbb{Z}$ such that $F_n(f)^k(\langle W_1,W_2,...,W_n\rangle_n)\cap q^{-1}(\Lambda)\neq\emptyset$. It follows that there exists $A\in \langle W_1,W_2,...,W_n\rangle_n$ with $A\notin F_1(X)$ such that $F_n(f)^k(A)\in q^{-1}(\Lambda)$, that is, $f^k(A)\in q^{-1}(\Lambda)$. So we have that $q(A)\in \Gamma$ and $q\circ f^k(A)\in \Lambda$. By Remark \ref{remark1}, we have that $SF_n(f)^k\circ q(A)\in \Lambda$. Hence, $q(A)\in SF_n(f)^k(\Gamma)\cap \Lambda\neq\emptyset$, which implies that $SF_n(f)$ is Z-transitive.

Suppose that $SF_n(f)$ is Z-transitive, we prove that $F_n(f)$ is Z-transitive. For this end, let $\mathcal{U},\mathcal{V}$ be nonempty open subsets of $F_n(X)$. By \cite[Lemma4.2]{a4}, there exist nonempty open subsets $U_1,U_2,...,U_n$ and $V_1,V_2,...,V_n$ of $X$ such that $\langle U_1,U_2,...,U_n\rangle_n\subseteq \mathcal{U}$ and $\langle V_1,V_2,...,V_n\rangle_n\subseteq \mathcal{V}$. For every $i\in\{1,2,...,n\}$, let $W_i$ be a nonempty open subset of $X$ such that $W_i\subseteq U_i$ and for any $i,j\in\{1,2,...,n\}$, $W_i\cap W_j\neq\emptyset$ if $i\neq j$. Similarly, for each $i\in\{1,2,...,n\}$, let $O_i$ be a nonempty open subset of $X$ such that $O_i\subseteq V_i$ and for any $i,j\in\{1,2,...,n\}$, $O_i\cap O_j\neq\emptyset$ if $i\neq j$. Note that $\langle U_1,U_2,...,U_n\rangle_n$ and $\langle V_1,V_2,...,V_n\rangle_n$ are nonempty open subsets of $F_n(X)$ such that $\langle W_1,W_2,...,W_n\rangle_n\subseteq \langle U_1,U_2,...,U_n\rangle_n\subseteq \mathcal{U}$, $\langle O_1,O_2,...,O_n\rangle_n\subseteq \langle V_1,V_2,...,V_n\rangle_n\subseteq \mathcal{V}$, $\langle W_1,W_2,...,W_n\rangle_n\cap F_1(X)=\emptyset$, and $\langle O_1,O_2,...,O_n\rangle_n\cap F_1(X)=\emptyset$. By Remark \ref{remark2}, $q(\langle W_1,W_2,...,W_n\rangle_n)$ and $q(\langle O_1,O_2,...,O_n\rangle_n)$ are nonempty open subsets of $SF_n(X)$ such that $F_X\notin q(\langle W_1,W_2,...,W_n\rangle_n)$ and $F_X\notin q(\langle O_1,O_2,...,O_n\rangle_n)$. Since $SF_n(f)$ is Z-transitive, there exists $k\in\mathbb{Z}$ such that $SF_n(f)^k(q(\langle W_1,W_2,...,W_n\rangle_n))\cap q(\langle O_1,O_2,...,O_n\rangle_n)\neq\emptyset$. By Remark \ref{remark1}, it follows that $q\circ F_n(f)^k(\langle W_1,W_2,...,W_n\rangle_n)\cap q(\langle O_1,O_2,...,O_n\rangle_n)\neq\emptyset$. From Remark \ref{remark2}, we obtain that $F_n(f)^k(\langle W_1,W_2,...,W_n\rangle_n)\cap \langle O_1,O_2,...,O_n\rangle_n\neq\emptyset$, which implies that $F_n(f)^k(\mathcal{U})\cap \mathcal{V}\neq\emptyset$. Therefore, $F_n(f)$ is Z-transitive.

Suppose that $F_n(f)$ is Z-transitive, we prove that $f$ is Z-transitive. For this, let $U,V$ be nonempty open subsets of $X$. Then $\langle U\rangle_n,\langle V\rangle_n$ are nonempty open subsets of $F_n(X)$. Since $F_n(f)$ is Z-transitive, there exists $k\in\mathbb{Z}$ such that $F_n(f)^k(\langle U\rangle_n)\cap \langle V\rangle_n\neq\emptyset$. That is to say, there exists $A\in \langle U\rangle_n$ such that $F_n(f)^k(A)\in \langle V\rangle_n$. Since $A\subseteq U$, it follows that $f^k(U)\cap V\neq\emptyset$. Therefore, $f$ is Z-transitive.

By \cite[Theorem 4.10]{a5} and Theorem \ref{Thz}, we deduce that (1) does not imply (2).

\end{proof}

\begin{theorem}\label{Thz}
Let $X$ be a continuum, let $n$ be an integer greater than or equal to two, and let $f: X \rightarrow X$ be a map.
Then $F_n(f)$ is Z-transitive if and only if $f$ is weakly mixing.
\end{theorem}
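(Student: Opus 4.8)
The plan is to read this as the $\mathbb{Z}$-transitive counterpart of the known equivalence ``$F_n(f)$ transitive $\iff$ $f$ weakly mixing'' (for $n\ge 2$), and to handle the nontrivial implication by reducing, via Lemma~\ref{yinli2}, to the $\mathbb{Z}$-transitivity of $f\times f$. So the two halves will be: produce a transitivity-type connection in $F_n(X)$ from weak mixing of $f$, and conversely extract weak mixing of $f$ from a $\mathbb{Z}$-transitivity-type connection in $F_n(X)$.

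For ``$f$ weakly mixing $\Rightarrow$ $F_n(f)$ is $\mathbb{Z}$-transitive'' it suffices to show $F_n(f)$ is transitive, since transitive trivially implies $\mathbb{Z}$-transitive. I would use that weak mixing implies weak mixing of all orders, so $f\times f\times\cdots\times f$ ($n$ factors) is transitive on $X^{n}$. Given nonempty open $\mathcal U,\mathcal V\subseteq F_n(X)$, pick by \cite[Lemma~4.2]{a4} nonempty open $U_1,\dots,U_n,V_1,\dots,V_n\subseteq X$ with $\langle U_1,\dots,U_n\rangle_n\subseteq\mathcal U$ and $\langle V_1,\dots,V_n\rangle_n\subseteq\mathcal V$; transitivity of the $n$-fold product yields $k\ge 1$ and $x_i\in U_i$ with $f^k(x_i)\in V_i$ for all $i$, and then $A=\{x_1,\dots,x_n\}$ together with $F_n(f)^k(A)=\{f^k(x_1),\dots,f^k(x_n)\}$ witnesses $F_n(f)^k(\mathcal U)\cap\mathcal V\neq\emptyset$. (Alternatively one may just quote the transitivity case of the results of \cite{a4} or \cite{a5}.)

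For the converse, suppose $F_n(f)$ is $\mathbb{Z}$-transitive. By Lemma~\ref{yinli2} it is enough to prove $f\times f$ is $\mathbb{Z}$-transitive; since (Furstenberg) weak mixing of $f$ is also equivalent to the condition that $N_f(U,V_1)\cap N_f(U,V_2)\neq\emptyset$ for all nonempty open $U,V_1,V_2\subseteq X$, I would aim to verify the latter. Fix such $U,V_1,V_2$; since $X$ is perfect and Hausdorff we may shrink them to be pairwise disjoint. Then $\langle U\rangle_n$ and $\langle V_1,V_2\rangle_n$ are nonempty open subsets of $F_n(X)$ (the hypothesis $n\ge 2$ is exactly what makes $\langle V_1,V_2\rangle_n$ nonempty) and $\langle U\rangle_n\cap\langle V_1,V_2\rangle_n=\emptyset$, so $\mathbb{Z}$-transitivity of $F_n(f)$ gives a \emph{nonzero} $k\in\mathbb{Z}$ with $F_n(f)^k(\langle U\rangle_n)\cap\langle V_1,V_2\rangle_n\neq\emptyset$. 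If $k\ge 1$ this produces $A\subseteq U$ whose image $f^k(A)$ meets both $V_1$ and $V_2$, hence $k\in N_f(U,V_1)\cap N_f(U,V_2)$, as wanted; if $k\le -1$, with $m=-k$, it produces $B\in\langle V_1,V_2\rangle_n$ with $f^m(B)\subseteq U$, hence $m\in N_f(V_1,U)\cap N_f(V_2,U)$. The point to note is that because $\langle U\rangle_n$ has a single ``slot'', no ``which point lands in which target'' ambiguity can occur among $V_1,V_2$; the unordered nature of $F_n(X)$ only forces the dichotomy between the two signs of $k$. So for every triple $(U,V_1,V_2)$ one obtains $N_f(U,V_1)\cap N_f(U,V_2)\neq\emptyset$ or $N_f(V_1,U)\cap N_f(V_2,U)\neq\emptyset$ (and by similar choices of sets one also sees that $f$ is $\mathbb{Z}$-transitive and every nonempty open set is non-wandering).

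The step I expect to be the real obstacle is upgrading this ``forward-or-backward'' dichotomy to the genuine Furstenberg condition, i.e.\ to $\mathbb{Z}$-transitivity of $f\times f$. Note that the dichotomy says precisely that the $\mathbb{Z}$-orbit of every ``diagonal box'' $U\times U$ is dense in $X\times X$, whereas one wants this for every box. To close the gap I would exploit that $X$ is a continuum, hence connected: if $f$ were not weakly mixing its maximal equicontinuous factor would be a nontrivial minimal rotation on a connected compact abelian group, and the ensuing invariant ``difference'' would keep the $\mathbb{Z}$-orbit of a suitably placed diagonal box uniformly away from the region one must reach, contradicting the dichotomy. Making this last argument clean — or, alternatively, finding a purely combinatorial bootstrap from the dichotomy (together with $f$ being $\mathbb{Z}$-transitive and everywhere non-wandering) to the undichotomized condition — is where the main work of the proof lies; once it is done, $f\times f$ is $\mathbb{Z}$-transitive and Lemma~\ref{yinli2} yields that $f$ is weakly mixing.
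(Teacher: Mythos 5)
Your forward direction is fine and coincides with the paper's (weak mixing gives transitivity of $F_n(f)$, hence $\mathbb{Z}$-transitivity). The converse, however, is not a proof: you stop at the decisive step. From $\mathbb{Z}$-transitivity of $F_n(f)$ applied to $\langle U\rangle_n$ versus $\langle V_1,V_2\rangle_n$ you only obtain the dichotomy ``$N_f(U,V_1)\cap N_f(U,V_2)\neq\emptyset$ or $N_f(V_1,U)\cap N_f(V_2,U)\neq\emptyset$'', and the passage from this to weak mixing (equivalently, via Lemma~\ref{yinli2}, to $\mathbb{Z}$-transitivity of $f\times f$) is exactly what you leave open. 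The closing strategy you sketch is not available in this setting: at that stage you only know that $f$ is $\mathbb{Z}$-transitive --- not forward transitive, not minimal, and not necessarily a homeomorphism --- so the implication ``not weakly mixing $\Rightarrow$ the maximal equicontinuous factor is a nontrivial minimal rotation on a connected compact abelian group'' cannot be invoked (that dichotomy belongs to minimal systems; outside minimality a non--weakly-mixing system need not have any nontrivial equicontinuous factor, and connectedness of $X$ alone does not repair this). So the proposal has a genuine gap at its self-acknowledged main step.

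For comparison, the paper does not pass through your dichotomy at all: it applies $\mathbb{Z}$-transitivity of $F_n(f)$ to the pair $\langle U_1,U_2\rangle_n$, $\langle V_1,V_2\rangle_n$, concludes that there is $k\in\mathbb{Z}$ with $f^k(U_1)\cap V_1\neq\emptyset$ and $f^k(U_2)\cap V_2\neq\emptyset$, i.e.\ $\mathbb{Z}$-transitivity of $f\times f$, and then invokes Lemma~\ref{yinli2}. Aiming directly at the $\mathbb{Z}$-condition for $f\times f$, rather than at the one-sided Furstenberg condition, is what makes the sign of $k$ harmless; this is the simplification your route misses. Note, though, that the paper's pairing step (``take $x\in A\cap U_1$, $y\in A\cap U_2$; then $f^k(x)\in V_1$ and $f^k(y)\in V_2$'') is asserted without justification as written: knowing that $A$ meets $U_1$ and $U_2$ while $f^k(A)$ meets $V_1$ and $V_2$ does not determine which points of $A$ land in which $V_i$. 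So the obstruction you identified is genuinely present at that point of the argument; but identifying it is not resolving it, and as it stands your proposal does not prove the theorem.
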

\begin{proof}
By \cite[Theorem 4.11]{a5}, if $f$ is weakly mixing, then $F_n(f)$ is transitive, therefore, $F_n(f)$ is Z-transitive.

On the other hand, let $U_1,U_2,V_1,V_2$ be nonempty open subsets of $X$. It follows that $\langle U_1,U_2\rangle_n$ and $\langle V_1,V_2\rangle_n$ are nonempty open subsets of $F_n(X)$. Since $F_n(f)$ is Z-transitive, there exists $k\in\mathbb{Z}$ such that $F_n(f)^k(\langle U_1,U_2\rangle_n)\cap \langle V_1,V_2\rangle_n\neq\emptyset$. Then there exists $A\in\langle U_1,U_2\rangle_n$ such that $F_n(f)^k(A)\in \langle V_1,V_2\rangle_n$. It follows that $f^k(A)\cap V_1\neq\emptyset$ and $f^k(A)\cap V_2\neq\emptyset$. Take $x\in A\cap U_1$, $y\in A\cap U_2$, then we have $f^k(x)\cap V_1\neq\emptyset$ and $f^k(y)\cap V_2\neq\emptyset$, which implies that $f\times f$ is Z-transitive. By Lemma \ref{yinli2}, $f$ is weakly mixing.

\end{proof}

\begin{theorem}
Let $X$ be a compactum, let $n\geq2$, let $A\in F_n(X)$, and let $f: X \rightarrow X$ be a function. Consider the following statements:

(1)for each $x\in A$, $x$ is a quasi-periodic point of $f$.

(2)$A$ is a quasi-periodic point of $F_n(f)$.

(3)$q(A)$ is a quasi-periodic point of $SF_n(f)$.

Then the following hold: (1) implies (2), and (2) implies (3).
\end{theorem}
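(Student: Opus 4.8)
The plan is to prove the two implications separately, in each case transporting a quasi-periodicity witness of the lower object to one for the induced object, using the basic open sets of $F_n(X)$ and the intertwining relations from Remark \ref{remark1} and Remark \ref{remark2}.

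For $(1)\Rightarrow(2)$ I would write $A=\{x_1,\dots,x_r\}$ with $r\le n$ and the $x_i$ pairwise distinct, and fix an open set $\mathcal U\subseteq F_n(X)$ with $A\in\mathcal U$. Using the definition of the topology on $F_n(X)$ together with the Hausdorffness of $X$, first choose pairwise disjoint open sets $V_1,\dots,V_r$ of $X$ with $x_i\in V_i$ for each $i$ and $\langle V_1,\dots,V_r\rangle_n\subseteq\mathcal U$. Since each $x_i$ is quasi-periodic for $f$, pick $m_i\in\mathbb{N}$ with $f^{km_i}(x_i)\in V_i$ for all $k\ge0$, and set $m=m_1m_2\cdots m_r$ (any common multiple would do). Then for each $k\ge0$ and each $i$ the exponent $km$ is a multiple of $m_i$, so $f^{km}(x_i)\in V_i$; hence $F_n(f)^{km}(A)$ meets each $V_i$ and is contained in $V_1\cup\dots\cup V_r$, that is, $F_n(f)^{km}(A)\in\langle V_1,\dots,V_r\rangle_n\subseteq\mathcal U$. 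This is exactly quasi-periodicity of $A$ under $F_n(f)$.

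For $(2)\Rightarrow(3)$: if $A\in F_1(X)$ then $q(A)=F_X$, which is fixed by $SF_n(f)$ and therefore trivially quasi-periodic (take $m=1$), so I may assume $A\notin F_1(X)$. Let $\Gamma$ be open in $SF_n(X)$ with $q(A)\in\Gamma$. Since $q$ is continuous and $F_1(X)$ is closed in $F_n(X)$, the set $\mathcal U:=q^{-1}(\Gamma)\setminus F_1(X)$ is open, contains $A$, misses $F_1(X)$, and satisfies $q(\mathcal U)\subseteq\Gamma$ by surjectivity of $q$. By $(2)$ there is $m\in\mathbb{N}$ with $F_n(f)^{km}(A)\in\mathcal U$ for all $k\ge0$. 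Applying $q$ and the relation $q\circ F_n(f)^{km}=(SF_n(f))^{km}\circ q$ from Remark \ref{remark1}, we obtain $(SF_n(f))^{km}(q(A))=q\big(F_n(f)^{km}(A)\big)\in q(\mathcal U)\subseteq\Gamma$ for all $k\ge0$, so $q(A)$ is quasi-periodic under $SF_n(f)$.

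The only steps requiring care are: (i) producing the disjoint neighborhoods $V_i$ with $\langle V_1,\dots,V_r\rangle_n\subseteq\mathcal U$, which is routine from the fact that sets of the form $\langle W_1,\dots,W_s\rangle_n$ form a base for $F_n(X)$ and that $X$ is Hausdorff, but this is where the hypotheses on $X$ enter; and (ii) in the second implication, separating out the case $A\in F_1(X)$ and, when $A\notin F_1(X)$, deleting $F_1(X)$ from the pulled-back neighborhood so that $q$ carries it into $\Gamma$ (implicitly this uses Remark \ref{remark2}). I do not expect a genuine obstruction here; the one nonformal idea is the period-synchronization $m=\prod_i m_i$ in the first implication.
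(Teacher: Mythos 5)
Your proof is correct and follows essentially the same route as the paper: (1)$\Rightarrow$(2) by synchronizing the individual quasi-periods via $m=m_1m_2\cdots m_r$, and (2)$\Rightarrow$(3) by pulling an open neighborhood of $q(A)$ back through $q$ and applying the relation $q\circ F_n(f)^{km}=(SF_n(f))^{km}\circ q$ of Remark \ref{remark1}. The only cosmetic differences are that you argue with Vietoris basic open sets $\langle V_1,\dots,V_r\rangle_n$ where the paper uses Hausdorff-metric balls $B_{d_H}(A,\varepsilon)$, and that your case split at $A\in F_1(X)$ together with deleting $F_1(X)$ from $q^{-1}(\Gamma)$ is an unnecessary (though harmless) precaution, since $q\bigl(q^{-1}(\Gamma)\bigr)\subseteq\Gamma$ already yields the conclusion for every $A$.
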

\begin{proof}
Suppose that for each $x\in A$, $x$ is a quasi-periodic point of $f$. Let $A=\{x_1,...,x_r\}$, and let $\varepsilon>0$. Since $x_i$ is a quasi-periodic point of $f$ for each $i=1,...,r$, there exists $m_i\in\mathbb{N}$ such that $f^{km_i}(x_i)\in B(x_i,\varepsilon)$ for any $k\in\mathbb{Z_+}$. Put $m=m_1m_2...m_r$. Then we have that $f^{km}(x_i)\in B(x_i,\varepsilon)$ for any $k\in\mathbb{Z_+}$. Therefore, $F_n(f)^{km}(A)\in B_{d_H}(A,\varepsilon)$ for any $k\in\mathbb{N}$. Thus, $A$ is a quasi-periodic point of $F_n(f)$.

Suppose that $A$ is a quasi-periodic point of $F_n(f)$. Let $\Omega$ be a nonempty open subset of $SF_n(X)$ such that $q(A)\in \Omega$. Then $q^{-1}(\Omega)$ be a nonempty open subset of $F_n(X)$ and $A\in q^{-1}(\Omega)$. Since $A$ is a quasi-periodic point of $F_n(f)$, there exists $m\in\mathbb{N}$ such that $f^{km}(A)\in q^{-1}(\Omega)$ for any $k\in\mathbb{Z_+}$. Further, $q\circ f^{km}(A)\in \Omega$ for any $k\in\mathbb{Z_+}$. By Remark \ref{remark1}, we have that $SF_n(f)^{km}\circ q(A)\in \Omega$. Therefore, $q(A)$ is a quasi-periodic point of $SF_n(f)$.

\end{proof}

\begin{theorem}
Let $X$ be a compactum, let $n\geq2$, let $A\in F_n(X)$, and let $f: X \rightarrow X$ be a function. Consider the following statements:

(1)$f$ is accessible.

(2)$F_n(f)$ is accessible.

(3)$SF_n(f)$ is accessible.

Then the following hold: (2) implies (1), and (2) implies (3).
\end{theorem}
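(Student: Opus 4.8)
The plan is to prove both implications by transporting open sets and $\varepsilon$-closeness through the operation $U\mapsto\langle U\rangle_n$ carrying open subsets of $X$ to open subsets of $F_n(X)$, and through the quotient map $q\colon F_n(X)\to SF_n(X)$, relying on two elementary facts about the Hausdorff metric $d_H$ on $F_n(X)$. First, if $P,Q\in F_n(X)$ satisfy $d_H(P,Q)<\varepsilon$, then every point of $P$ lies within $\varepsilon$ of some point of $Q$, and symmetrically. Second, for any $C,D\in F_n(X)$ one has $G_n(q(C))=F_1(X)\cup\{C\}$ and $G_n(q(D))=F_1(X)\cup\{D\}$ (regardless of whether $C$ or $D$ meets $F_1(X)$), and hence
\[
\rho_X^n(q(C),q(D))=\mathcal{H}_X^2\big(F_1(X)\cup\{C\},\,F_1(X)\cup\{D\}\big)\leq d_H(C,D),
\]
since the common part $F_1(X)$ contributes nothing to the Hausdorff distance. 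Together with the commutation relations $SF_n(f)^k\circ q=q\circ F_n(f)^k$ (Remark~\ref{remark1}) and $F_n(f)^k(A)=f^k(A)$, these facts are all that is needed.

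For $(2)\Rightarrow(1)$, assume $F_n(f)$ is accessible and let $\varepsilon>0$ and nonempty open sets $U,V\subseteq X$ be given. Then $\langle U\rangle_n$ and $\langle V\rangle_n$ are nonempty open subsets of $F_n(X)$, so accessibility of $F_n(f)$ yields $\mathcal{A}\in\langle U\rangle_n$, $\mathcal{B}\in\langle V\rangle_n$ and $k\in\mathbb{N}$ with $d_H(f^k(\mathcal{A}),f^k(\mathcal{B}))=d_H(F_n(f)^k(\mathcal{A}),F_n(f)^k(\mathcal{B}))<\varepsilon$. Fixing any $x\in\mathcal{A}\subseteq U$, the first fact above gives $y\in\mathcal{B}\subseteq V$ with $d(f^k(x),f^k(y))<\varepsilon$; as $\varepsilon,U,V$ were arbitrary, $f$ is accessible.

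For $(2)\Rightarrow(3)$, assume $F_n(f)$ is accessible and let $\varepsilon>0$ and nonempty open sets $\Gamma,\Lambda\subseteq SF_n(X)$ be given. Since $q$ is continuous, $q^{-1}(\Gamma)$ and $q^{-1}(\Lambda)$ are nonempty open subsets of $F_n(X)$, so accessibility of $F_n(f)$ yields $\mathcal{A}\in q^{-1}(\Gamma)$, $\mathcal{B}\in q^{-1}(\Lambda)$ and $k\in\mathbb{N}$ with $d_H(F_n(f)^k(\mathcal{A}),F_n(f)^k(\mathcal{B}))<\varepsilon$. Then $q(\mathcal{A})\in\Gamma$, $q(\mathcal{B})\in\Lambda$, and by Remark~\ref{remark1} followed by the second fact,
\[
\rho_X^n\big(SF_n(f)^k(q(\mathcal{A})),\,SF_n(f)^k(q(\mathcal{B}))\big)=\rho_X^n\big(q(F_n(f)^k(\mathcal{A})),\,q(F_n(f)^k(\mathcal{B}))\big)\leq d_H\big(F_n(f)^k(\mathcal{A}),\,F_n(f)^k(\mathcal{B})\big)<\varepsilon,
\]
so $SF_n(f)$ is accessible. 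Note that here --- in contrast with the sensitivity arguments --- no device is needed to keep the chosen open sets away from $F_X$, because we are mapping \emph{into} $SF_n(X)$ rather than extracting open subsets of it.

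The only step that is not pure bookkeeping is the second fact, i.e.\ the inequality $\rho_X^n(q(C),q(D))\leq d_H(C,D)$: one must unwind the definition of $\rho_X^n$ through $G_n$ and the Hausdorff metric $\mathcal{H}_X^2$ on $\mathcal{C}_2(F_n(X))$, observe that $G_n(q(C))=F_1(X)\cup\{C\}$ in every case, and check that the Hausdorff distance between $F_1(X)\cup\{C\}$ and $F_1(X)\cup\{D\}$ is at most $d_H(C,D)$. This is exactly the estimate already appearing in the displayed computations in the proofs of the sensitivity theorems above, so it can be invoked rather than reproved; I expect no genuine difficulty beyond this.
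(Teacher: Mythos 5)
Your proposal is correct and follows essentially the same route as the paper: for (2)$\Rightarrow$(1) it uses the open sets $\langle U\rangle_n,\langle V\rangle_n$ and the pointwise consequence of $d_H$-closeness, and for (2)$\Rightarrow$(3) it pulls back $\Gamma,\Lambda$ via $q^{-1}$, applies Remark~\ref{remark1}, and uses the estimate $\rho_X^n(q(C),q(D))\leq d_H(C,D)$, which is exactly the displayed inequality in the paper's proof. No gaps; your explicit justification of $G_n(q(C))=F_1(X)\cup\{C\}$ is a welcome bit of extra care rather than a deviation.
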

\begin{proof}
Suppose that $F_n(f)$ is accessible, we prove that $f$ is accessible. Let $\varepsilon>0$ and let $U,V$ be nonempty open sets of $X$. Then $\langle U\rangle_n$ and $\langle V\rangle_n$ are nonempty open sets of $F_n(X)$. Since $F_n(f)$ is accessible, there exist $m\in\mathbb{N}$, $A\in \langle U\rangle_n,B\in \langle V\rangle_n$ such that $d_H(F_n(f)^n(A),F_n(f)^n(B))<\varepsilon$. Thus, there exist $x\in A, y\in B$ such that $d(f^m(x),f^m(y))\leq d_H(F_n(f)^m(x),F_n(f)^m(y))<\varepsilon$. Therefore, $f$ is accessible.

Suppose that $F_n(f)$ is accessible, we prove that $SF_n(f)$ is accessible. Let $\varepsilon>0$ and let $\Gamma,\Omega$ be nonempty open sets of $F_n(X)$. Then $q^{-1}(\Gamma)$ and $q^{-1}(\Omega)$ are nonempty open sets of $F_n(X)$. Since $F_n(f)$ is accessible, there exist $m\in\mathbb{N}$, $A\in q^{-1}(\Gamma),B\in q^{-1}(\Omega)$ such that $d_H(F_n(f)^m(A),F_n(f)^m(B))<\varepsilon$. Besides, we have $q(A)\in \Gamma$ and $q(B)\in \Omega$, and

\begin{equation*}
\begin{aligned}
&d_H(SF_n(f)^m(q(A)),SF_n(f)^m(q(B)))\\
=&d_H(q\circ F_n(f)^m(A),q\circ F_n(f)^m(B))\\
=&d_H(F_1(X)\cup q^{-1}\circ q\circ F_n(f)^m(A),F_1(X)\cup q^{-1}\circ q\circ F_n(f)^m(B))\\
\leq& d_H(F_n(f)^m(A),F_n(f)^m(B))\\
<&\varepsilon.
\end{aligned}
\end{equation*}

Therefore, $SF_n(f)$ is accessible.

\end{proof}

\begin{theorem}
Let $X$ be a continuum, let $n$ be an integer greater than or equal to two, and let $f: X \rightarrow X$ be a map. Consider the following statements:

(1) $f$ is indecomposable;

(2)$F_n(f)$ is indecomposable;

(3)$SF_n(f)$ is indecomposable.

Then (2) and (3) are equivalent, and (2) implies (1).
\end{theorem}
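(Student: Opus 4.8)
The plan is to transfer $f$-invariant closed sets with nonempty interior back and forth through the symmetric-product functor $F_n$ and the quotient map $q$, using that each of these operations preserves closedness, invariance, and---with some care about the collapsed point $F_X$---nonemptiness of the interior. Throughout I will replace a nonempty open subset of $F_n(X)$ by a basic open set $\langle U_1,\dots,U_n\rangle_n$ via \cite[Lemma 4.2]{a4}, and then shrink its coordinates to pairwise disjoint nonempty open sets $W_i\subseteq U_i$ exactly as in the proofs above, so that $\langle W_1,\dots,W_n\rangle_n\cap F_1(X)=\emptyset$; this is where the hypothesis that $X$ is perfect enters. The genuinely delicate point will be the reverse ``descent of interiors'': inferring that $C$ has nonempty interior from the fact that $F_n(C)$, $q^{-1}(C)$ or $q(C)$ does.

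\emph{Plan for (2) implies (1).} Let $A,B\subseteq X$ be $f$-invariant closed sets with $int(A)\neq\emptyset\neq int(B)$. Then $F_n(A),F_n(B)$ are closed in $F_n(X)$, they satisfy $F_n(f)(F_n(A))\subseteq F_n(A)$ since $f(E)\subseteq f(A)\subseteq A$ for every $E\in F_n(A)$, and they have nonempty interior because $\langle U\rangle_n\subseteq F_n(A)$ whenever $U\subseteq A$ is nonempty open. Indecomposability of $F_n(f)$ yields $int(F_n(A)\cap F_n(B))\neq\emptyset$, and since $F_n(A)\cap F_n(B)=F_n(A\cap B)$ there is a nonempty open $\mathcal W\subseteq F_n(A\cap B)$; choosing $\langle U_1,\dots,U_n\rangle_n\subseteq\mathcal W$ by \cite[Lemma 4.2]{a4} and then points $x_i\in U_i$ (with $x_1\in U_1$ arbitrary) shows $\{x_1,\dots,x_n\}\subseteq A\cap B$, hence $U_1\subseteq A\cap B$ and $int(A\cap B)\neq\emptyset$.

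\emph{Plan for (2) implies (3).} Let $\mathcal A,\mathcal B\subseteq SF_n(X)$ be $SF_n(f)$-invariant closed sets with nonempty interior. Then $q^{-1}(\mathcal A),q^{-1}(\mathcal B)$ are closed (continuity of $q$), have nonempty interior (the $q$-preimage of a nonempty open set is nonempty and open, by surjectivity of $q$), and are $F_n(f)$-invariant by Lemma \ref{yinli1} together with $SF_n(f)(\mathcal A)\subseteq\mathcal A$. Indecomposability of $F_n(f)$ gives $int(q^{-1}(\mathcal A\cap\mathcal B))=int(q^{-1}(\mathcal A)\cap q^{-1}(\mathcal B))\neq\emptyset$; shrinking a basic open subset of $q^{-1}(\mathcal A\cap\mathcal B)$ as above to $\langle W_1,\dots,W_n\rangle_n$ disjoint from $F_1(X)$, Remark \ref{remark2} shows that $q(\langle W_1,\dots,W_n\rangle_n)$ is a nonempty open subset of $SF_n(X)$, and it lies in $q(q^{-1}(\mathcal A\cap\mathcal B))=\mathcal A\cap\mathcal B$; hence $int(\mathcal A\cap\mathcal B)\neq\emptyset$.

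\emph{Plan for (3) implies (2), and the main obstacle.} Let $A,B\subseteq F_n(X)$ be $F_n(f)$-invariant closed sets with nonempty interior. First observe that $q$ is a closed map: $F_1(X)$ is closed in the compact Hausdorff space $F_n(X)$, so for closed $C$ the set $q^{-1}(q(C))$ equals $C$ or $C\cup F_1(X)$ and is therefore closed. Thus $q(A),q(B)$ are closed, and they are $SF_n(f)$-invariant because $SF_n(f)\circ q=q\circ F_n(f)$ (Remark \ref{remark1}). They also have nonempty interior: a basic open set inside $int(A)$, shrunk to pairwise disjoint coordinates, gives $\langle W_1,\dots,W_n\rangle_n\subseteq int(A)$ disjoint from $F_1(X)$, so by Remark \ref{remark2} its $q$-image is a nonempty open subset of $q(A)$, and likewise for $B$. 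Indecomposability of $SF_n(f)$ then produces a nonempty open $\Omega\subseteq q(A)\cap q(B)$; since $\{F_X\}$ has empty interior in $SF_n(X)$---its preimage $F_1(X)$ has empty interior in $F_n(X)$ because $F_n(X)\setminus F_1(X)$ is dense when $n\geq 2$---we may assume $F_X\notin\Omega$. Because $q$ is injective off $F_1(X)$, every $E\in q^{-1}(\Omega)$ lies outside $F_1(X)$ and its unique $q$-preimage belongs to both $A$ and $B$, so $q^{-1}(\Omega)\subseteq A\cap B$, giving $int(A\cap B)\neq\emptyset$. This last implication is the most delicate one, precisely because of the descent-of-interiors issue flagged above: \cite[Lemma 4.2]{a4}, the density of $F_n(X)\setminus F_1(X)$, and the partial homeomorphism $q|_{F_n(X)\setminus F_1(X)}$ of Remark \ref{remark2} are all needed, and one must consistently choose the witnessing basic open sets with pairwise disjoint coordinates so that neither $F_1(X)$ nor the collapsed point $F_X$ interferes.
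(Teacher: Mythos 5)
Your proposal is correct and follows essentially the same route as the paper: transfer invariant closed sets with nonempty interior via $\langle A\rangle_n=F_n(A)$, via $q^{-1}$ together with Lemma \ref{yinli1}, and via $q$-images, then descend interiors using basic open sets with pairwise disjoint coordinates and Remark \ref{remark2}. In fact your treatment of (3)$\Rightarrow$(2) is slightly more careful than the paper's: by discarding $F_X$ from the open set $\Omega$ (legitimate since $\{F_X\}$ has empty interior) and using injectivity of $q$ off $F_1(X)$, you justify the inclusion $q^{-1}(\Omega)\subseteq \mathcal{U}\cap\mathcal{V}$, which the paper asserts without addressing the case $F_X\in\Omega$.
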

\begin{proof}
Suppose (2) holds, we prove (1) holds. Let $A,B$ are two $f$-invariant closed subsets of $X$  with $int(A)\neq\emptyset$ and $int(B)\neq\emptyset$. Then by \cite[Theorem3.2, Theorem 3.5]{a10}, $\langle A\rangle_n,\langle B\rangle_n$ are two $F_n(f)$-invariant closed subsets of $F_n(X)$  with $int(\langle A\rangle_n)\neq\emptyset$ and $int(\langle B\rangle_n)\neq\emptyset$. Since $F_n(f)$ is indecomposable,  we have $int(\langle A\rangle_n\cap \langle B\rangle_n)\neq\emptyset$, that is to say, there exists a nonempty open subset $\mathcal{U}$ of $F_n(X)$ such that $\mathcal{U}\subseteq \langle A\rangle_n\cap \langle B\rangle_n$. Put $W=\cup \mathcal{U}$, then $W$ is a nonempty open subset of $X$ by \cite[Theorem3.2]{a10}. Next, we will claim that $W\subseteq A\cap B$. For this, let $p\in W$, it follows that there exists $D\in \mathcal{U}$ such that $p\in D$. Note that $\mathcal{U}\subseteq \langle A\rangle_n\cap \langle B\rangle_n$. Therefore, $p\in A\cap B$. In consequence, $W\subseteq A\cap B$, which implies that $int(A\cap B)\neq\emptyset$. Hence, $f$ is indecomposable.

Suppose (2) holds, we show that (3) holds.  Let $\Gamma,\Lambda$ are two $SF_n(f)$-invariant closed subsets of $SF_n(X)$  with $int(\Gamma)\neq\emptyset$ and $int(\Lambda)\neq\emptyset$. Since $q$ is continuous, $q^{-1}(\Gamma),q^{-1}(\Lambda)$ are two closed subsets of $F_n(X)$  with $int(q^{-1}(\Gamma))\neq\emptyset$ and $int(q^{-1}(\Lambda))\neq\emptyset$. Besides, for any $y\in q^{-1}(\Lambda)$, there exist $z\in \Lambda$ such that
$y=q^{-1}(z)$. Notice that $$F_n(f)(y)=F_n(f)\circ q^{-1}(z)\in F_n(f)\circ q^{-1}(\Lambda)\subseteq q^{-1}\circ SF_n(f)(\Lambda)\subseteq q^{-1}(\Lambda),$$
which implies that $q^{-1}(\Gamma)$ is $F_n(f)$-invariant. As the same way, $q^{-1}(\Gamma)$ is $F_n(f)$-invariant.
Since $F_n(f)$ is indecomposable,  we have $int(q^{-1}(\Gamma)\cap q^{-1}(\Lambda))\neq\emptyset$, that is to say, there exists a nonempty open subset $\mathcal{U}$ of $F_n(X)$ such that $\mathcal{U}\subseteq q^{-1}(\Gamma)\cap q^{-1}(\Lambda)$. Then $q(\mathcal{U})\subseteq \Gamma\cap \Lambda$ and there exist nonempty open subsets $U_1,U_2,...,U_n$ of $X$ such that $\langle U_1,U_2,...,U_n\rangle_n\subseteq \mathcal{U}$ and $U_i\cap U_j=\emptyset$ if $i\neq j$ for each $i,j=1,2,...,n$. Hence by remark, $q\langle U_1,U_2,...,U_n\rangle_n$ nonempty open subset of $SF_n(X)$, which implies that $int(q(\mathcal{U}))\neq\emptyset$. This and the fact that $q(\mathcal{U})\subseteq \Gamma\cap \Lambda$ reduce that $int(\Gamma\cap \Lambda)\neq\emptyset$. Hence, $SF_n(f)$ is indecomposable.

Suppose (3) holds, we see that (2) holds. Let $\mathcal{U},\mathcal{V}$ are two $F_n(f)$-invariant closed subsets of $F_n(X)$  with $int(\mathcal{U})\neq\emptyset$ and $int(\mathcal{V})\neq\emptyset$. Then $q(\mathcal{U}),q(\mathcal{V})$ are two closed subsets of $SF_n(X)$  and $int(q(\mathcal{U}))\neq\emptyset$ and $int(q(\mathcal{V}))\neq\emptyset$. Besides, since $SF_n(f)\circ q(\mathcal{U})=q\circ F_n(f)\mathcal{U}\subseteq q(\mathcal{U})$, $q(\mathcal{U})$ is $SF_n(f)$-invariant. In the same way, $q(\mathcal{V})$ is $SF_n(f)$-invariant.
Suppose that $SF_n(f)$ is indecomposable. Then $int(q(\mathcal{U})\cap q(\mathcal{V}))\neq\emptyset$. This means that there exists a nonempty open subset $\Gamma$ of $SF_n(X)$ such that $\Gamma\subseteq q(\mathcal{U})\cap q(\mathcal{V})$. Since $q$ is continuous, $q^{-1}(\Gamma)$ is a nonempty open subset of $F_n(X)$ and $q^{-1}(\Gamma)subsete \mathcal{U}\cap \mathcal{V}$, it follows that $int(\mathcal{U}\cap \mathcal{V})\neq\emptyset$. Hence, $F_n(f)$ is indecomposable.

\end{proof}

\begin{theorem}
Let $X$ be a compactum, let $n\geq2$, let $A\in F_n(X)$, and let $f: X \rightarrow X$ be a function. Consider the following statements:

(1)$f$ is multi-transitive.

(2)$F_n(f)$ is multi-transitive.

(3)$SF_n(f)$ is multi-transitive.

Then (2) and (3) are equivalent, and (2) implies (1).
\end{theorem}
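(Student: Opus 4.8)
The plan is to reduce multi-transitivity to a single statement about simultaneous hitting times and then recycle, almost verbatim, the quotient-map bookkeeping already used for the $\mathbb{Z}$-transitive case (the proof of the theorem on $\mathbb{Z}$-transitivity above). The starting observation is that $(F_n(f))^k=F_n(f^k)$ and, by Remark \ref{remark1}, $(SF_n(f))^k\circ q=q\circ (F_n(f))^k$ for every $k\in\mathbb{N}$; unwinding the definition, a map $g$ is multi-transitive exactly when for every $m\in\mathbb{N}$ and all nonempty open $U_1,\dots,U_m,V_1,\dots,V_m$ in its phase space there is one common $k\in\mathbb{N}$ with $g^{ik}(U_i)\cap V_i\neq\emptyset$ for all $i\in\{1,\dots,m\}$. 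All three implications will be proved in this form, treating the coordinate index $i$ as a passive parameter.

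For $(2)\Rightarrow(1)$: I would fix $m$ and nonempty open $U_1,\dots,U_m,V_1,\dots,V_m\subseteq X$, pass to $\langle U_i\rangle_n,\langle V_i\rangle_n\subseteq F_n(X)$, and apply multi-transitivity of $F_n(f)$ to get a common $k$ with $F_n(f)^{ik}(\langle U_i\rangle_n)\cap\langle V_i\rangle_n\neq\emptyset$ for all $i$. Picking $A_i\in\langle U_i\rangle_n$ with $F_n(f)^{ik}(A_i)\in\langle V_i\rangle_n$ and then $x_i\in A_i$ gives $x_i\in U_i$ and $f^{ik}(x_i)\in V_i$; hence $f\times f^{2}\times\cdots\times f^{m}$ is transitive, and since $m$ was arbitrary, $f$ is multi-transitive.

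For $(2)\Leftrightarrow(3)$: this is the parametrized analogue of the $\mathbb{Z}$-transitive proof. In the direction $(2)\Rightarrow(3)$, given nonempty open $\Gamma_i,\Lambda_i\subseteq SF_n(X)$ I would pull back by $q$, use \cite[Lemma4.2]{a4} to shrink each $q^{-1}(\Gamma_i)$ and $q^{-1}(\Lambda_i)$ to a basic open set $\langle W_{i1},\dots,W_{in}\rangle_n$, resp.\ $\langle O_{i1},\dots,O_{in}\rangle_n$, with the defining sets arranged so that the basic set misses $F_1(X)$ (hence, by Remark \ref{remark2}, its $q$-image is open and avoids $F_X$), extract a common $k$ from multi-transitivity of $F_n(f)$, and transport the witnesses through $q$ using Remark \ref{remark1}: if $F_n(f)^{ik}(A_i)\in\langle O_{i1},\dots,O_{in}\rangle_n$ with $A_i\in\langle W_{i1},\dots,W_{in}\rangle_n$, then $q(A_i)\in\Gamma_i$ and $(SF_n(f))^{ik}(q(A_i))=q(F_n(f)^{ik}(A_i))\in\Lambda_i$, so $SF_n(f)\times\cdots\times(SF_n(f))^{m}$ is transitive. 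For $(3)\Rightarrow(2)$ I would run the same reduction on nonempty open $\mathcal{U}_i,\mathcal{V}_i\subseteq F_n(X)$, push the basic sets forward by $q$, obtain a common $k$ from multi-transitivity of $SF_n(f)$, and use Remark \ref{remark1} to get $q\circ F_n(f)^{ik}(\langle W_{i1},\dots,W_{in}\rangle_n)\cap q(\langle O_{i1},\dots,O_{in}\rangle_n)\neq\emptyset$; since both sets avoid $F_1(X)$ and $q$ is injective off $F_1(X)$, this upgrades to $F_n(f)^{ik}(\langle W_{i1},\dots,W_{in}\rangle_n)\cap\langle O_{i1},\dots,O_{in}\rangle_n\neq\emptyset$, whence $F_n(f)^{ik}(\mathcal{U}_i)\cap\mathcal{V}_i\neq\emptyset$ for all $i$.

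The routine parts (existence of the Vietoris-type basic sets, their disjointness from $F_1(X)$, the elementary identities for $q$) are exactly as in the preceding theorems and I would not redo them. The point that needs genuine care, and the one I expect to be the only real obstacle, is that multi-transitivity demands all $m$ coordinates resonate at the \emph{same} time $k$: one must check that this common $k$ is preserved under each passage between $X$, $F_n(X)$ and $SF_n(X)$. This works because in every step the index $i$ is just a label attached to an independently chosen basic set $\langle W_{i1},\dots,W_{in}\rangle_n$, and because all the relevant open sets are kept away from $F_1(X)$, so $q$ restricts there to a homeomorphism and preserves the intersection relations verbatim. No reverse implication $(1)\Rightarrow(2)$ is claimed, in line with the circle-rotation example, and I would not attempt it.
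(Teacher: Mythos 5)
Your proposal is correct and follows essentially the same route as the paper: (2)$\Rightarrow$(1) via the sets $\langle U_i\rangle_n,\langle V_i\rangle_n$, (2)$\Rightarrow$(3) by pulling back through $q$ and transporting witnesses with Remark \ref{remark1}, and (3)$\Rightarrow$(2) by shrinking to basic open sets disjoint from $F_1(X)$, pushing forward by $q$, and using Remarks \ref{remark1} and \ref{remark2}, with the common time $k$ carried along coordinatewise exactly as in the paper. The only difference is cosmetic: in (2)$\Rightarrow$(3) you also shrink $q^{-1}(\Gamma_i),q^{-1}(\Lambda_i)$ to basic sets avoiding $F_1(X)$, which the paper skips because it is not needed in that direction.
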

\begin{proof}
Suppose that $F_n(f)$ is multi-transitive, we prove that $SF_n(f)$ is multi-transitive. Let $m\in\mathbb{N}$, we see that $SF_n(f)\times SF_n(f)^2\times\cdot\cdot\cdot\times SF_n(f)^m: SF_n(X)^m\rightarrow SF_n(X)^m$ is transitive.
Let $\Gamma_1,\Gamma_2,...,\Gamma_m,\Lambda_1,\Lambda_2,...,\Lambda_m$  be any collection of nonempty open subsets of $SF_n(X)$. Since $q$ is continuous, $q^{-1}(\Gamma_1),q^{-1}(\Gamma_2),...,q^{-1}(\Gamma_m),q^{-1}(\Lambda_1),q^{-1}(\Lambda_2),...,q^{-1}(\Lambda_m)$ are collection of nonempty open subsets of $F_n(X)$. Since $F_n(f)$ is multi-transitive, there exists $k\in\mathbb{N}$ such that $F_n(f)^{ki}(q^{-1}(\Gamma_i))\cap q^{-1}(\Lambda_i)\neq\emptyset$ for each $i\in\{1,2,..,m\}$. It follows that there exists $A_i\in q^{-1}(\Gamma_i)$ such that $F_n(f)^{ki}(A_i)\in q^{-1}(\Lambda_i)$, that is, $f^{ki}(A_i)\in q^{-1}(\Lambda_i)$ for each $i\in\{1,2,..,m\}$. So we have that $q(A_i)\in \Gamma_i$ and $q\circ f^{ki}(A_i)\in \Lambda_i$ for each $i\in\{1,2,..,m\}$. By Remark \ref{remark1}, we have that $SF_n(f)^{ki}\circ q(A_i)\in \Lambda_i$ for each $i\in\{1,2,..,m\}$. Hence, $q(A_i)\in SF_n(f)^{ki}(\Gamma_i)\cap \Lambda_i\neq\emptyset$ for each $i\in\{1,2,..,m\}$, which implies that $SF_n(f)\times SF_n(f)^2\times\cdot\cdot\cdot\times SF_n(f)^m: SF_n(X)^m\rightarrow SF_n(X)^m$ is transitive. Therefore, $SF_n(f)$ is multi-transitive.

Suppose that $SF_n(f)$ is multi-transitive, we prove that $F_n(f)$ is multi-transitive. For this end, let $m\in\mathbb{N}$, let $i\in\{1,2,..,m\}$ and let $\mathcal{U}_i,\mathcal{V}_i$ be any collection of nonempty open subsets of $F_n(X)$. By \cite[Lemma4.2]{a4}, there exist nonempty open subsets $U_{i1},U_{i2},...,U_{in}$ and $V_{i1},V_{i2},...,V_{in}$ of $X$ such that $\langle U_{i1},U_{i2},...,U_{in}\rangle_n\subseteq \mathcal{U}_i$ and $\langle V_{i1},V_{i2},...,V_{in}\rangle_n\subseteq \mathcal{V}_i$. For every $j\in\{1,2,...,n\}$, let $W_{ij}$ be a nonempty open subset of $X$ such that $W_{ij}\subseteq U_{ij}$ and for any $j,k\in\{1,2,...,n\}$, $W_{ij}\cap W_{ik}\neq\emptyset$ if $j\neq k$. Similarly, for each $j\in\{1,2,...,n\}$, let $O_{ij}$ be a nonempty open subset of $X$ such that $O_{ij}\subseteq V_{ij}$ and for any $j,k\in\{1,2,...,n\}$, $O_{ij}\cap O_{ik}\neq\emptyset$ if $j\neq k$. Note that $\langle U_{i1},U_{i2},...,U_{in}\rangle_n$ and $\langle V_{i1},V_{i2},...,V_{in}\rangle_n$ are nonempty open subsets of $F_n(X)$ such that $\langle W_{i1},W_{i2},...,W_{in}\rangle_n\subseteq \langle U_{i1},U_{i2},...,U_{in}\rangle_n\subseteq \mathcal{U}_i$, $\langle O_{i1},O_{i2},...,O_{in}\rangle_n\subseteq \langle V_{i1},V_{i2},...,V_{in}\rangle_n\subseteq \mathcal{V}_i$, $\langle W_{i1},W_{i2},...,W_{in}\rangle_n\cap F_1(X)=\emptyset$, and $\langle O_{i1},O_{i2},...,O_{in}\rangle_n\cap F_1(X)=\emptyset$. By Remark \ref{remark2}, $q(\langle W_{i1},W_{i2},...,W_{in}\rangle_n)$ and $q(\langle O_{i1},O_{i2},...,O_{in}\rangle_n)$ are nonempty open subsets of $SF_n(X)$ such that $F_X\notin q(\langle W_{i1},W_{i2},...,W_{in}\rangle_n)$ and $F_X\notin q(\langle O_{i1},O_{i2},...,O_{in}\rangle_n)$. Since $SF_n(f)$ is multi-transitive, there exists $l\in\mathbb{N}$ such that $SF_n(f)^{li}(q(\langle W_1,W_2,...,W_n\rangle_n))\cap q(\langle O_1,O_2,...,O_n\rangle_n)\neq\emptyset$. By Remark \ref{remark1}, it follows that $q\circ F_n(f)^l(\langle W_{i1},W_{i2},...,W_{in}\rangle_n)\cap q(\langle O_{i1},O_{i2},...,O_{in}\rangle_n)\neq\emptyset$. From Remark \ref{remark2}, we obtain that $F_n(f)^{li}(\langle W_{i1},W_{i2},...,W_{in}\rangle_n)\cap \langle O_{i1},O_{i2},...,O_{in}\rangle_n\neq\emptyset$, which implies that $F_n(f)^{li}(\mathcal{U}_i)\cap \mathcal{V}_i\neq\emptyset$. Therefore, $F_n(f)$ is multi-transitive.

Suppose that $F_n(f)$ is multi-transitive, we prove that $f$ is multi-transitive. For this, let $m\in\mathbb{N}$, let $i\in\{1,2,..,m\}$ and let $U_i,V_i$ be nonempty open subsets of $X$. Then $\langle U_i\rangle_n,\langle V_i\rangle_n$ are nonempty open subsets of $F_n(X)$. Since $F_n(f)$ is multi-transitive, there exists $k\in\mathbb{N}$ such that $F_n(f)^{ki}(\langle U_i\rangle_n)\cap \langle V_i\rangle_n\neq\emptyset$. That is to say, there exists $A_i\in \langle U_i\rangle_n$ such that $F_n(f)^{ki}(A_i)\in \langle V_i\rangle_n$. Since $A_i\subseteq U_i$, it follows that $f^{ki}(U_i)\cap V_i\neq\emptyset$. Therefore, $f$ is multi-transitive.

\end{proof}

\begin{theorem}\label{dldeta}
Let $X$ be a continuum, let $n$ be an integer greater than or equal to two, and let $f: X \rightarrow X$ be a map. Consider the following statements:

(1) $f$ is $\bigtriangleup$-transitive;

(2)$F_n(f)$ is $\bigtriangleup$-transitive;

(3)$SF_n(f)$ is $\bigtriangleup$-transitive.

Then (2) implies (1), and (2) implies (3).
\end{theorem}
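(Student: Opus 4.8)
The plan is to unwind the definition: for each $m\in\mathbb{N}$ one must exhibit a dense set of points whose diagonal orbit $\{(g^{k}(z),g^{2k}(z),\dots,g^{mk}(z)):k\in\mathbb{Z}_+\}$ is dense in the $m$-fold power of the phase space, where $g$ is whichever of $f$, $F_n(f)$, $SF_n(f)$ is relevant. Both implications then come down to transporting such a witnessing dense set between the systems, and the only real subtlety is a coordinatewise extraction carried out inside the hyperspace.

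For $(2)\Rightarrow(3)$ I would fix $m\in\mathbb{N}$ and let $\mathcal{Y}\subseteq F_n(X)$ be a dense set witnessing $\bigtriangleup$-transitivity of $F_n(f)$. Since $q$ is a continuous surjection, $Y:=q(\mathcal{Y})$ is dense in $SF_n(X)$. For $\chi=q(\mathcal{A})$ with $\mathcal{A}\in\mathcal{Y}$, apply the continuous surjection $q\times\cdots\times q\colon F_n(X)^{m}\twoheadrightarrow SF_n(X)^{m}$ to the dense set $\{(F_n(f)^{k}(\mathcal{A}),\dots,F_n(f)^{mk}(\mathcal{A})):k\in\mathbb{Z}_+\}$; by Remark \ref{remark1} (which gives $q\circ F_n(f)^{j}=SF_n(f)^{j}\circ q$ on all of $F_n(X)$) the image of this set is precisely the diagonal orbit of $\chi$, hence dense in $SF_n(X)^{m}$. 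Thus $Y$ witnesses $\bigtriangleup$-transitivity of $SF_n(f)$.

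For $(2)\Rightarrow(1)$ one cannot simply pull density back along the embedding $X\cong F_1(X)\hookrightarrow F_n(X)$, whose image is nowhere dense; instead I would fix $m$, take a dense witness $\mathcal{Y}$ for $F_n(f)$, and for an arbitrary nonempty open $U\subseteq X$ choose $\mathcal{A}\in\mathcal{Y}\cap\langle U\rangle_n$, which is possible since $\langle U\rangle_n$ is nonempty and open in $F_n(X)$. The crux is the claim that every point $a\in\mathcal{A}$ has dense diagonal orbit in $X^{m}$: given any $(x^{(1)},\dots,x^{(m)})\in X^{m}$ and $\varepsilon>0$, density of the diagonal orbit of $\mathcal{A}$ yields $k$ with $\mathcal{H}_X\bigl(F_n(f)^{jk}(\mathcal{A}),\{x^{(j)}\}\bigr)<\varepsilon$ for all $j=1,\dots,m$, and Hausdorff closeness to the \emph{singleton} $\{x^{(j)}\}$ forces every point of $F_n(f)^{jk}(\mathcal{A})$, in particular $f^{jk}(a)$, to lie within $\varepsilon$ of $x^{(j)}$. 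Letting $Y$ be the set of points of $X$ with dense diagonal orbit in $X^{m}$, this shows $Y\cap U\neq\emptyset$ for every nonempty open $U$, so $Y$ is dense, and ranging over $m$ gives that $f$ is $\bigtriangleup$-transitive.

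I expect the main obstacle to be exactly this coordinatewise extraction in $(2)\Rightarrow(1)$: it is essential that the approximating target be collapsed to the singletons $\{x^{(j)}\}$, so that Hausdorff closeness to it controls the individual iterates $f^{jk}(a)$ one coordinate at a time; one should also record that density of its diagonal orbit in $F_n(X)^{m}$ in fact forces $\mathcal{A}$ to consist of exactly $n$ points, since otherwise it could never approximate $n$-point subsets of $X$ in general position (though the conclusion above does not need this). The remaining ingredients — density of images under the continuous surjections $q$ and $q\times\cdots\times q$, and the identities of Remark \ref{remark1} — are routine.
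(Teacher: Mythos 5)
Your proposal is correct and follows essentially the same route as the paper: for (2)$\Rightarrow$(3) both arguments push the witnessing dense family forward through $q$ using $q\circ F_n(f)^k=SF_n(f)^k\circ q$, and for (2)$\Rightarrow$(1) both exploit that when the diagonal orbit of $\mathcal{A}\in F_n(X)$ enters a set of the form $\langle V\rangle_n$ (equivalently, is Hausdorff-close to the singleton $\{x\}$), every point of $\mathcal{A}$ is forced into $V$ coordinate by coordinate. The only cosmetic difference is that the paper takes the witness set in $X$ to be $\bigcup\mathcal{A}$ and cites a known result that the union of a dense subset of $F_n(X)$ is dense in $X$, whereas you verify density of your witness set directly by intersecting the dense family with $\langle U\rangle_n$ for each nonempty open $U$.
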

\begin{proof}
We first prove that (2) implies (1). For any $m\in\mathbb{Z_+}$, since $F_n(f)$ is $\bigtriangleup$-transitive,
there exist a subset $\mathcal{A}\subseteq F_n(X)$ with $\overline{\mathcal{A}}=F_n(X)$ such that
$$\overline{(F_n(f)^n(A),F_n(f)^{2n}(A),...,F_n(f)^{mn}(A):n\in\mathbb{Z_+})}=F_n(X)^m$$ for any $A\in \mathcal{A}$. By \cite[Theorem 3.14]{a10}, we have $\overline{\cup\mathcal{A}}=X$. Denote $Y=\cup\mathcal{A}$. Then for any $y\in Y$, there exists $A\in \mathcal{A}$ such that $y\in A$. For any nonempty open subset $V$ of $X^m$, there exist nonempty open subsets $V_1,V_2,...,V_m\subseteq X$ such that $(V_1,V_2,...,V_m)\subseteq V$. Hence  $(\langle V_1\rangle_n,\langle V_2\rangle_n,...,\langle V_m\rangle_n)$ is a nonempty open subset of $F_n(X)^m$. Hence, there exist $k\in\mathbb{Z_+}$ such that $F_n(f)^{ki}(A)\in \langle V_i\rangle_n$ for each $i=1,2,...,m$. That is to say, $f^{ki}(A)\in V_i$ for each $i=1,2,...,m$. Therefore, $f^{ki}(y)\in V_i$ for each $i=1,2,...,m$, which implies that  $$\overline{(f^n(y),f^{2n}(y),...,f^{mn}(y):n\in\mathbb{Z_+})}=X^m.$$
Thus $f$ is $\bigtriangleup$-transitive.

Next, we prove that (2) implies (3). For any $m\in\mathbb{Z_+}$, since $F_n(f)$ is $\bigtriangleup$-transitive,
there exist a subset $\mathcal{A}\subseteq F_n(X)$ with $\overline{\mathcal{A}}=F_n(X)$ such that
$$\overline{(F_n(f)^n(A),F_n(f)^{2n}(A),...,F_n(f)^{mn}(A):n\in\mathbb{Z_+})}=F_n(X)^m$$ for any $A\in \mathcal{A}$.
Since $q$ is  surjective, $\overline{q(\mathcal{A})}=SF_n(X)$. Denote $W=q(\mathcal{A})$. For any $\chi\in W$, there exists $A\in \mathcal{A}$ such that $\chi=q(A)$. For any nonempty open subset $\mathcal{V}$ of $SF_n(X)^m$, there exist nonempty open subsets $\mathcal{V}_1,\mathcal{V}_2,...,\mathcal{V}_m\subseteq SF_n(X)$ such that $(\mathcal{V}_1,\mathcal{V}_2,...,\mathcal{V}_m)\subseteq \mathcal{V}$. Hence  $(q^{-1}\mathcal{V}_1,q^{-1}\mathcal{V}_2,...,q^{-1}\mathcal{V}_m)$ is a nonempty open subset of $F_n(X)^m$. Hence, there exist $k\in\mathbb{Z_+}$ such that $F_n(f)^{ki}(A)\in q^{-1}\mathcal{V}_i$ for each $i=1,2,...,m$. That is to say, $q\circ F_n(f)^{ki}(A)\in \mathcal{V}_i$ for each $i=1,2,...,m$. Therefore, $SF_n(f)^{ki}(\chi)=F_n(f)^{ki}(q(A))\in \mathcal{V}_i$ for each $i=1,2,...,m$, which implies that  $$\overline{(SF_n(f)^n(\chi),SF_n(f)^{2n}(\chi),...,SF_n(f)^{mn}(\chi):n\in\mathbb{Z_+})}=SF_n(X)^m.$$
Thus $SF_n(f)$ is $\bigtriangleup$-transitive.

\end{proof}

\begin{theorem}
Let $X$ be a continuum, let $n$ be an integer greater than or equal to two, and let $f: X \rightarrow X$ be a map. Consider the following statements:

(1) $f$ is $\bigtriangleup$-mixing;

(2)$F_n(f)$ is $\bigtriangleup$-mixing;

(3)$SF_n(f)$ is $\bigtriangleup$-mixing.

Then (2) implies (1), and (2) implies (3).
\end{theorem}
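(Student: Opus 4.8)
The plan is to copy, essentially verbatim, the strategy used for Theorem~\ref{dldeta}, since the notion of $\bigtriangleup$-mixing differs from $\bigtriangleup$-transitivity only in that the diagonal orbit $f^{n}(y),f^{2n}(y),\dots,f^{mn}(y)$ is now indexed by an arbitrarily prescribed infinite set $B\subseteq\mathbb{Z_+}$ rather than by all of $\mathbb{Z_+}$; every transfer lemma used in the $\bigtriangleup$-transitive case (namely \cite[Theorem 3.14]{a10}, surjectivity of $q$, and Remark~\ref{remark1}) is insensitive to this restriction, so the same index set $B$ simply comes along for the ride.

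For (2)$\Rightarrow$(1) I would fix $m\in\mathbb{Z_+}$ and an infinite $B\subseteq\mathbb{Z_+}$, and use that $F_n(f)$ is $\bigtriangleup$-mixing to produce a dense set $\mathcal{A}\subseteq F_n(X)$ such that $\{(F_n(f)^{k}(A),F_n(f)^{2k}(A),\dots,F_n(f)^{mk}(A)):k\in B\}$ is dense in $F_n(X)^m$ for every $A\in\mathcal{A}$. By \cite[Theorem 3.14]{a10} the set $Y=\bigcup\mathcal{A}$ is then dense in $X$. Given $y\in Y$, pick $A\in\mathcal{A}$ with $y\in A$; given a nonempty open $V\subseteq X^m$, choose basic open $(V_1,\dots,V_m)\subseteq V$, note that $(\langle V_1\rangle_n,\dots,\langle V_m\rangle_n)$ is a nonempty open subset of $F_n(X)^m$, extract $k\in B$ with $F_n(f)^{ki}(A)\in\langle V_i\rangle_n$ for all $i$, and conclude $f^{ki}(A)\subseteq V_i$, hence $f^{ki}(y)\in V_i$. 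This shows the $B$-indexed diagonal orbit of $y$ is dense in $X^m$, i.e.\ $f$ is $\bigtriangleup$-mixing.

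For (2)$\Rightarrow$(3) I would keep the same $\mathcal{A}$, use surjectivity of $q$ to get $\overline{q(\mathcal{A})}=SF_n(X)$, and set $W=q(\mathcal{A})$. For $\chi\in W$ write $\chi=q(A)$ with $A\in\mathcal{A}$; for a nonempty open $\mathcal{V}\subseteq SF_n(X)^m$ choose basic open $(\mathcal{V}_1,\dots,\mathcal{V}_m)\subseteq\mathcal{V}$, observe $(q^{-1}\mathcal{V}_1,\dots,q^{-1}\mathcal{V}_m)$ is nonempty open in $F_n(X)^m$, find $k\in B$ with $F_n(f)^{ki}(A)\in q^{-1}\mathcal{V}_i$ for all $i$, and invoke Remark~\ref{remark1} to get $SF_n(f)^{ki}(\chi)=q\circ F_n(f)^{ki}(A)\in\mathcal{V}_i$. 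Hence the $B$-indexed diagonal orbit of $\chi$ is dense in $SF_n(X)^m$, so $SF_n(f)$ is $\bigtriangleup$-mixing.

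I do not expect any genuine obstacle here: the one point that deserves a sentence of care is that a single index $k\in B$ must simultaneously send all $m$ coordinates $F_n(f)^{ki}(A)$ into the prescribed targets, but this is exactly the content of density of the diagonal orbit in the product $F_n(X)^m$, so nothing beyond the bookkeeping already done for Theorem~\ref{dldeta} is required; I would also note explicitly that \cite[Theorem 3.14]{a10} applies to the arbitrary dense set $\mathcal{A}$, just as it did there.
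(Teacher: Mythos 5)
Your proposal is correct and is essentially the paper's own argument: the paper simply states that the proof is similar to that of Theorem~\ref{dldeta}, and your write-up carries out exactly that adaptation, re-running the $\bigtriangleup$-transitive transfer (via \cite[Theorem 3.14]{a10}, surjectivity of $q$, and Remark~\ref{remark1}) with the prescribed infinite index set $B$ carried through unchanged.
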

\begin{proof}
The proof is similar to the proof in Theorem \ref{dldeta}.

\end{proof}

\begin{theorem}\label{thdj}
Let $X$ be a compactum, let $n\geq2$, and let $f: X \rightarrow X$ be a function. Then the following are equivalent:

(1)$f$ is weakly mixing.

(2)$F_n(f)$ is weakly mixing.

(3)$SF_n(f)$ is weakly mixing.

(4)$F_n(f)$ is totally transitive.

(5)$SF_n(f)$ is totally transitive.

(6)$F_n(f)$ is transitive.

(7)$SF_n(f)$ is transitive.

\end{theorem}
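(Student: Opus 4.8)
The plan is to establish the equivalence by a cycle of implications, using the already-proved Theorem~\ref{Thz} and Lemma~\ref{yinli2} as the backbone. Recall that Theorem~\ref{Thz} states $F_n(f)$ is $\mathbb{Z}$-transitive if and only if $f$ is weakly mixing, and in the course of its proof it is shown (via \cite[Theorem 4.11]{a5}) that weak mixing of $f$ gives transitivity of $F_n(f)$. So I would first observe that (1) $\Rightarrow$ (6) is exactly that cited fact, and (6) $\Rightarrow$ (1) follows since transitive implies $\mathbb{Z}$-transitive and then Theorem~\ref{Thz} applies. This already gives (1) $\Leftrightarrow$ (6).

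Next I would run the chain through the suspension. For (6) $\Rightarrow$ (7): this is the $\mathbb{Z}$-transitive transfer argument of Theorem~4, but one checks it goes through verbatim with $k\in\mathbb{N}$ in place of $k\in\mathbb{Z}$, since the openness lemma \cite[Lemma4.2]{a4}, Remark~\ref{remark2} and Remark~\ref{remark1} are insensitive to the sign of the exponent; thus transitivity of $F_n(f)$ yields transitivity of $SF_n(f)$. For (7) $\Rightarrow$ (1): given nonempty open $U_1,U_2,V_1,V_2\subseteq X$, form $\langle U_1,U_2\rangle_n$ and $\langle V_1,V_2\rangle_n$, push them down to nonempty open subsets of $SF_n(X)$ via Remark~\ref{remark2} after shrinking so they miss $F_1(X)$, apply transitivity of $SF_n(f)$, pull back through $q$ using Remark~\ref{remark1}, and extract points $x\in A\cap U_1$, $y\in A\cap U_2$ exactly as in the proof of Theorem~\ref{Thz}; this shows $f\times f$ is transitive, i.e. $f$ is weakly mixing. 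So far we have (1) $\Leftrightarrow$ (6) $\Leftrightarrow$ (7).

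For the remaining nodes I would use the standard fact that weak mixing implies total transitivity (indeed all powers are weakly mixing, hence transitive), so (1) $\Rightarrow$ (4) and (1) $\Rightarrow$ (5) once we know (1) $\Rightarrow$ (2) and (1) $\Rightarrow$ (3). To get (1) $\Rightarrow$ (2): weak mixing of $f$ gives weak mixing of $f$ itself, hence by the equivalence (1) $\Leftrightarrow$ (6) applied to iterates, or more directly by checking the four-open-set condition on basic open sets $\langle U_1,\dots,U_k\rangle_n$ of $F_n(X)$ and using that $f$ weakly mixing implies $f\times\cdots\times f$ ($k$ factors) is transitive, one obtains that $F_n(f)$ is weakly mixing; push this down through $q$ as in Theorem~\ref{dldeta}-style arguments to get (3). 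Then (2) $\Rightarrow$ (6), (3) $\Rightarrow$ (7), (4) $\Rightarrow$ (6), (5) $\Rightarrow$ (7) are all trivial (weak mixing or total transitivity each imply transitivity), closing every loop.

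The main obstacle I expect is the transfer of weak mixing (not merely transitivity) between the three systems in a way that does not silently re-use facts only proved for continua while the hypothesis here is only ``compactum''; in particular one must be careful that \cite[Theorem 4.11]{a5} and \cite[Lemma4.2]{a4} are available in the topological (perfect Hausdorff) setting, or else reprove the key openness statement ``for every nonempty open $\mathcal{U}\subseteq F_n(X)$ there are nonempty open $U_1,\dots,U_n\subseteq X$ with $\langle U_1,\dots,U_n\rangle_n\subseteq\mathcal{U}$'' and the basic-open-set characterization of weak mixing. Everything else is a routine diagram chase through Remarks~\ref{remark1} and~\ref{remark2}, so I would organize the write-up as: (i) recall weak mixing $\Leftrightarrow$ all finite powers transitive; (ii) prove (1) $\Leftrightarrow$ (2) $\Leftrightarrow$ (3) for weak mixing by the $q$-transfer; (iii) note weak mixing $\Rightarrow$ totally transitive $\Rightarrow$ transitive for each system; (iv) close the cycle with the cheap implications (2),(4) $\Rightarrow$ (6), (3),(5) $\Rightarrow$ (7), and (6) $\Rightarrow$ (1), (7) $\Rightarrow$ (1) via Theorem~\ref{Thz} and the argument above.
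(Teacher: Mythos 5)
Your overall plan is sound and does close all seven statements into loops, but it is organized quite differently from the paper's proof. The paper argues in two lines at the level of $F_n(f)$ alone: since $F_n(X)$ is compact it cites \cite[Theorem 3.3]{a17} for ``weakly mixing implies totally transitive'', notes that totally transitive implies transitive, and closes the cycle with the known equivalence of transitivity of $F_n(f)$ with weak mixing, dispatching the $f$- and $SF_n(f)$-levels with ``a similar argument''. You instead route everything through the base map: (1)$\Leftrightarrow$(6) via \cite[Theorem 4.11]{a5} together with Theorem \ref{Thz} (transitive implies $\mathbb{Z}$-transitive), (6)$\Leftrightarrow$(7) by an explicit $q$-transfer using Remarks \ref{remark1} and \ref{remark2}, and weak mixing $\Rightarrow$ total transitivity by the general Furstenberg fact rather than the compactness-based citation. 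Your version is more self-contained, makes the suspension transfers explicit instead of ``similar'', and is more alert to the metric-versus-topological mismatch (the paper itself cites \cite{a17}, stated for compact metric spaces, while $X$ here is only a compactum); the paper's version is shorter by citation.

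One step as you describe it does not quite work. In (7)$\Rightarrow$(1) (and the same weakness is already present in the paper's proof of Theorem \ref{Thz}, which you imitate), from $A\in\langle W_1,W_2\rangle_n$ and $F_n(f)^k(A)\in\langle O_1,O_2\rangle_n$ you cannot simply ``extract $x\in A\cap U_1$, $y\in A\cap U_2$'' with $f^k(x)\in V_1$ and $f^k(y)\in V_2$: you only know that some points of $A$ land in $O_1$ and some in $O_2$, and those points may lie in the wrong $U_i$, or both in the same one. The standard repair is to use the characterization that $f$ is weakly mixing iff for all nonempty open $U,V_1,V_2$ there is $k$ with $f^k(U)\cap V_1\neq\emptyset\neq f^k(U)\cap V_2$: shrink $U$ to disjoint nonempty open $W_1,W_2\subseteq U$ and $V_1,V_2$ to disjoint $O_1\subseteq V_1$, $O_2\subseteq V_2$, apply transitivity of $SF_n(f)$ to $q(\langle W_1,W_2\rangle_n)$ and $q(\langle O_1,O_2\rangle_n)$, and note $A\subseteq U$ while $f^k(A)$ meets both $O_i$. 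Alternatively, and more economically within your own scheme, prove (7)$\Rightarrow$(6) by the reverse $q$-transfer (the second half of the paper's $\mathbb{Z}$-transitivity argument, run with $k\in\mathbb{N}$) and then obtain (1) from Theorem \ref{Thz}, so the extraction never has to be redone at the suspension level. Finally, in your summary item (i), ``weak mixing $\Leftrightarrow$ all finite powers transitive'' is correct only for Cartesian powers $f\times\cdots\times f$; for iterates $f^m$ only the forward implication holds (total transitivity does not imply weak mixing), though that is the only direction your argument actually uses.
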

\begin{proof}
Since $X$ is compact, $F_n(X)$ is compact. Then by \cite[Theorem 3.3]{a17}, weakly mixing of $F_n(f)$ implies total transitivity of $F_n(f)$. Besides, Note that total transitivity of $F_n(f)$ implies transitivity of $F_n(f)$ and that transitivity of $F_n(f)$ is equivalent to weakly mixing of $F_n(f)$. Therefore, (3), (4) and (6) are equivalent.  With the similar argument, we can get the result.

\end{proof}

\begin{theorem}
Let $X$ be a compactum, and let $f: X \rightarrow X$ be a function. If $F_n(f)$ is Martelli's chaos, then $f$ is Martelli's chaos.
\end{theorem}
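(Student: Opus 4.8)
The plan is to extract a Martelli point of $f$ from a Martelli point of $F_n(f)$. Recall that $F_n(f)^m(A)=f^m(A)$ for every $A\in F_n(X)$ and $m\in\mathbb{Z}_+$; fix $\mathcal{A}_0=\{x_1,\dots,x_r\}\in F_n(X)$ (with $1\le r\le n$ and the $x_i$ pairwise distinct) whose $F_n(f)$-orbit is dense in $F_n(X)$ and unstable, and let $\mathcal{H}_X$ denote the Hausdorff metric on $F_n(X)$.

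\emph{Density.} First I would show that each $x_i$ already has dense orbit under $f$. Given $x\in X$ and $\varepsilon>0$, density of $orb(\mathcal{A}_0,F_n(f))$ in $F_n(X)$ gives $k$ with $\mathcal{H}_X(f^k(\mathcal{A}_0),\{x\})<\varepsilon$; since for a singleton this distance equals $\max_{a\in f^k(\mathcal{A}_0)}d(a,x)$, every point of $f^k(\mathcal{A}_0)$ — in particular $f^k(x_i)$ — lies within $\varepsilon$ of $x$. Hence $orb(x_i,f)$ is dense in $X$ for every $i$.

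\emph{Instability.} Let $\delta>0$ witness that $orb(\mathcal{A}_0,F_n(f))$ is unstable. I claim that $orb(x_i,f)$ is unstable for some $i$, and I would argue by contradiction: if no $orb(x_i,f)$ were unstable, then, instantiating the negation of instability at the value $\delta$, for each $i$ there is $\eta_i>0$ such that $d(x_i,y)<\eta_i$ implies $d(f^m(x_i),f^m(y))\le\arctan\delta$ for all $m\in\mathbb{Z}_+$. Put $\eta^{\ast}=\min\{\eta_1,\dots,\eta_r\}$ when $r=1$ and $\eta^{\ast}=\min\bigl(\{\eta_1,\dots,\eta_r\}\cup\{\tfrac12 d(x_i,x_j):i\ne j\}\bigr)$ when $r\ge2$; then $\eta^{\ast}>0$. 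Take any $\mathcal{B}\in F_n(X)$ with $\mathcal{H}_X(\mathcal{A}_0,\mathcal{B})<\eta^{\ast}$: each point of $\mathcal{B}$ lies within $\eta^{\ast}$ of a unique $x_i$ (uniqueness from the separation terms) and each $x_i$ has a point of $\mathcal{B}$ within $\eta^{\ast}$, so $\mathcal{B}=\bigsqcup_{i=1}^{r}\mathcal{B}_i$ with $\emptyset\ne\mathcal{B}_i\subseteq B(x_i,\eta^{\ast})$. Then $f^m(\mathcal{B}_i)\subseteq\overline{B}(f^m(x_i),\arctan\delta)$ for every $m$, and since each $\mathcal{B}_i$ is nonempty this yields $\mathcal{H}_X(f^m(\mathcal{A}_0),f^m(\mathcal{B}))\le\arctan\delta$ for all $m\in\mathbb{Z}_+$. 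Choosing the neighborhood of $\mathcal{A}_0$ in the instability definition to be the $\eta^{\ast}$-ball then contradicts the instability of $orb(\mathcal{A}_0,F_n(f))$.

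Consequently some $x_{i_0}$ has an orbit that is simultaneously dense (first part) and unstable (second part), so $f$ is Martelli's chaos. I expect the instability step to be the only real difficulty: a set $\mathcal{B}$ that is Hausdorff-close to $\mathcal{A}_0$ need not be an $r$-element set with one point near each $x_i$ — it may pile several points of $\mathcal{B}$ near a single $x_i$ — and the separation constant $\tfrac12\min_{i\ne j}d(x_i,x_j)$ is precisely what makes the decomposition $\mathcal{B}=\bigsqcup\mathcal{B}_i$ available, after which the bound on $\mathcal{H}_X(f^m(\mathcal{A}_0),f^m(\mathcal{B}))$ is routine. (Note that perfectness of $X$ is not needed for this direction; only the metric structure is used.)
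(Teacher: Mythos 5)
Your proof is correct, and it follows the same overall skeleton as the paper's (dense orbits for every point of the Martelli set $A_0$, plus instability of some point of $A_0$), but the instability step is handled by a genuinely different argument. The paper works directly: for each $\varepsilon>0$ it takes the $B$ and $n$ furnished by instability of $A_0$ and extracts, by a short contradiction with the Hausdorff estimate, a pair $x_0\in A_0$, $y_0\in B$ with $d(x_0,y_0)<\varepsilon$ and $d(f^n(x_0),f^n(y_0))>\delta$; this is shorter because it never needs to decompose $B$, but the chosen $x_0$ depends on $\varepsilon$, and the paper leaves implicit the final pigeonhole over the finite set $A_0$ needed to fix a single unstable point. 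Your contrapositive argument buys exactly that missing uniformity: assuming every $x_i$ is stable at level $\arctan\delta$, the separation constant $\frac12\min_{i\neq j}d(x_i,x_j)$ gives the cluster decomposition $\mathcal{B}=\bigcup_i\mathcal{B}_i$ of any set Hausdorff-close to $A_0$, and the resulting bound $\mathcal{H}_X(f^m(A_0),f^m(\mathcal{B}))\le\arctan\delta$ for all $m$ contradicts instability of $A_0$ outright, so a single $x_{i_0}$ with unstable (and, by the first part, dense) orbit is produced cleanly. For the density part you argue directly via the Hausdorff distance to singletons, which is the same computation the paper outsources to its Theorem \ref{dltr}; both are fine. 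The only cosmetic remark is that your appeal to the threshold $\arctan\delta$ is actually more faithful to the paper's stated definition of an unstable orbit than the paper's own proof, which slips between $\delta$ and $\arctan\delta$.
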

\begin{proof}
Since $F_n(f)$ is Martelli's chaos, there exists a transitive point $A\in F_n(X)$ and the orbit of $A$ is unstable in $F_n(X)$. It follows from Theorem \ref{dltr} that $x$ is a transitive point in $X$ for any $x\in A$. For a other hand, since $A$ is unstable in $F_n(X)$, there exists $\delta>0$ such that for any $\varepsilon>0$, there exist a point $B\in B_{d_H}(A,\varepsilon)$ and $n\in \mathbb{Z_+}$ satisfying $d_H(f^n(A),f^n(B))>\delta$. We claim that there exist $x_0\in A$ and $y_0\in B$ such that $y_0\in B(x_0,\varepsilon)$ and $d(f^n(x_0),f^n(y_0))>\delta$. If not, then for any $x\in A$ and any $y\in B$ with $y\in B(x,\varepsilon)$, $d(f^n(x),f^n(y))\leq\delta$. It follows that $d_H(f^n(A),f^n(B))\leq\delta$, which is a contradiction. Therefore, $x_0$ is a unstable point. Thus, $f$ is Martelli's chaos.

\end{proof}

It is well-known that weakly mixing dynamical systems have sensitive dependence on initial conditions. Since sensitive implies unstable. Therefore, weakly mixing dynamical systems have unstable orbit. And it is well-known that when $X$ is a compact metric space with no isolated points, transitive and point transitive are equivalent. Therefore, we can establish the following:

\begin{theorem}
Let $X$ be a compactum with no isolated point, let $n\geq2$, and let $f: X \rightarrow X$ be a function. Then the following are equivalent:

(1)$f$ is weakly mixing.

(2)$F_n(f)$ is weakly mixing.

(3)$SF_n(f)$ is weakly mixing.

(4)$F_n(f)$ is totally transitive.

(5)$SF_n(f)$ is totally transitive.

(6)$SF_n(f)$ is transitive.

(7)$SF_n(f)$ is transitive.

(8)$F_n(f)$ is Martelli's chaos.

\end{theorem}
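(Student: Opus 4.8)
The plan is to splice the new item (8) into the chain of equivalences already furnished by Theorem \ref{thdj}. That theorem gives that statements (1)--(7) are mutually equivalent, and in particular that each of them is equivalent to transitivity of $F_n(f)$. So it suffices to prove two implications: that (2), $F_n(f)$ weakly mixing, implies (8), and that (8) implies transitivity of $F_n(f)$; everything else is then immediate from Theorem \ref{thdj}.

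The implication (8) $\Rightarrow$ transitivity of $F_n(f)$ is essentially definitional. If $F_n(f)$ is Martelli's chaos, then by definition there is a point $A_0\in F_n(X)$ whose $F_n(f)$-orbit is dense in $F_n(X)$; a dynamical system possessing a dense orbit is topologically transitive, so $F_n(f)$ is transitive and we are back inside the list of Theorem \ref{thdj}.

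For (2) $\Rightarrow$ (8) I would argue along the lines indicated in the paragraph preceding the statement. First, since $X$ is a compact metric space with no isolated points, the same is true of $F_n(X)$: an isolated point $\{p_1,\dots,p_k\}$ of $F_n(X)$ would force $p_1$ to be isolated in $X$. Next, $F_n(X)$ is infinite (it contains the copy $F_1(X)$ of the perfect space $X$), so weak mixing of $F_n(f)$ implies that $F_n(f)$ is sensitive; and if $F_n(f)$ is sensitive with constant $c>0$, then every orbit is unstable (pick $\delta>0$ small enough that $\arctan\delta<c$, which is possible since $\arctan\delta\to0$). On the other hand weak mixing implies transitivity, and on a compact metric space with no isolated points transitivity is equivalent to point transitivity, so $F_n(f)$ admits a transitive point $A_0\in F_n(X)$. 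The orbit of $A_0$ is then simultaneously dense and unstable, so $F_n(f)$ is Martelli's chaos, i.e.\ (8) holds. Combining the two implications with Theorem \ref{thdj} shows that (1)--(8) are all equivalent.

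The only non-formal ingredients here are the two standard facts invoked --- that a weakly mixing map on an infinite compact metric space has sensitive dependence on initial conditions, and that on a perfect compact metric space transitivity coincides with point transitivity --- together with the elementary observation that $F_n(X)$ inherits having no isolated points from $X$. The $\arctan$ occurring in the definition of an unstable orbit is immaterial, since it merely reparametrizes the separation constant. I expect the hyperspace inheritance claim and the precise citation for ``weakly mixing $\Rightarrow$ sensitive'' to be the points that require the most care to state correctly.
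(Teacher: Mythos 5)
Your proposal is correct and takes essentially the same route as the paper: the paper's (much terser) proof likewise notes that $F_n(X)$ is compact without isolated points (citing \cite[Theorem 3.16]{a10}), uses the standard facts that weak mixing implies sensitivity (hence unstable orbits, the $\arctan$ being immaterial) and that transitivity coincides with point transitivity on such spaces to tie (8) to the other statements, and appeals to Theorem \ref{thdj} for the equivalence of (1)--(7). The only caveat is that your step ``a dense orbit implies transitivity'' is not true unconditionally but does hold because $F_n(X)$ has no isolated points, a fact you had already established earlier in the argument.
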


\begin{proof}
By \cite[Theorem 3.16]{a10}, since $X$ is compact with no isolated points, then $F_n(X)$ is also compact with no isolated points. Then (1) and (8) are equivalent. The rest of equivalence from Theorem \ref{thdj}.

\end{proof}

The next eight Theorems extend Theorem 4.7, Theorem 4.8,Theorem 4.11, Theorem 4.12, Theorem 4.14, Theorem 4.17, Theorem 4.18 and Theorem 4.21 in \cite{a16}.
\begin{theorem}\label{dltr}
Let $X$ be a compactum, let $n\geq2$, let $B\in F_n(X)$, and let $f: X \rightarrow X$ be a function. Consider the following statements:

(1)for each $x\in B$, $x$ is a transitive point of $f$.

(2)$B$ is a transitive point of $F_n(f)$.

(3)$q(B)$ is a transitive point of $SF_n(f)$.

Then the following hold: (2) and (3) are equivalent, (2) implies (1), but (1) does not imply (2).
\end{theorem}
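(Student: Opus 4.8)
The plan is to handle the three implications separately, using the relation $q\circ F_n(f)=SF_n(f)\circ q$ from Remark~\ref{remark1} and the description of open sets in $F_n(X)$ and $SF_n(X)$ from Remarks~\ref{remark1} and~\ref{remark2}. For the implication $(2)\Rightarrow(1)$, suppose $B=\{x_1,\dots,x_r\}$ is a transitive point of $F_n(f)$, and fix $x\in B$ and a nonempty open set $U\subseteq X$. Then $\langle U\rangle_n$ is a nonempty open subset of $F_n(X)$, so there is $k\in\mathbb{Z_+}$ with $F_n(f)^k(B)\in\langle U\rangle_n$, i.e. $f^k(B)\subseteq U$; in particular $f^k(x)\in U$. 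Hence $orb(x,f)$ is dense in $X$, so $x$ is a transitive point of $f$.

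For $(2)\Leftrightarrow(3)$, I would argue both directions via $q$. If $B$ is a transitive point of $F_n(f)$, take any nonempty open $\Lambda\subseteq SF_n(X)$; then $q^{-1}(\Lambda)$ is nonempty open in $F_n(X)$, so some $F_n(f)^k(B)\in q^{-1}(\Lambda)$, and applying $q$ together with Remark~\ref{remark1} gives $SF_n(f)^k(q(B))=q(F_n(f)^k(B))\in\Lambda$; thus $q(B)$ is a transitive point of $SF_n(f)$. Conversely, suppose $q(B)$ is a transitive point of $SF_n(f)$, and let $\mathcal{U}$ be a nonempty open subset of $F_n(X)$. Here is the one delicate point: an arbitrary open $\mathcal{U}$ need not have open $q$-image, so I first shrink $\mathcal{U}$. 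By \cite[Lemma4.2]{a4} choose nonempty open $U_1,\dots,U_n\subseteq X$ with $\langle U_1,\dots,U_n\rangle_n\subseteq\mathcal{U}$, then pick nonempty open $W_i\subseteq U_i$ with $W_i\cap W_j\neq\emptyset$ for $i\neq j$, so that $\langle W_1,\dots,W_n\rangle_n\subseteq\mathcal{U}$ and $\langle W_1,\dots,W_n\rangle_n\cap F_1(X)=\emptyset$; by Remark~\ref{remark2}, $q(\langle W_1,\dots,W_n\rangle_n)$ is a nonempty open subset of $SF_n(X)$ not containing $F_X$. Transitivity of $q(B)$ gives $k$ with $SF_n(f)^k(q(B))\in q(\langle W_1,\dots,W_n\rangle_n)$; using Remark~\ref{remark1} this reads $q(F_n(f)^k(B))\in q(\langle W_1,\dots,W_n\rangle_n)$, and since this set avoids the collapsed point $F_X$, Remark~\ref{remark2} lets me pull back through the homeomorphism $SF_n(X)\setminus\{F_X\}\cong F_n(X)\setminus F_1(X)$ to conclude $F_n(f)^k(B)\in\langle W_1,\dots,W_n\rangle_n\subseteq\mathcal{U}$. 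Hence $B$ is a transitive point of $F_n(f)$.

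For the final claim that $(1)$ does not imply $(2)$, I would invoke the irrational rotation $f$ on $S^1$ from \cite[Example 4.6]{a5}: $f$ is minimal, so every $x\in S^1$ is a transitive point, yet $F_n(f)$ is not even transitive (indeed $SF_n(f)\notin\mathcal{M}$ for the listed transitivity classes), so for a suitable $B\in F_n(S^1)$ with $|B|\geq 2$, $B$ fails to be a transitive point of $F_n(f)$ even though each of its points is a transitive point of $f$.

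The main obstacle is the converse direction $(3)\Rightarrow(2)$: one cannot just push a nonempty open $\mathcal{U}\subseteq F_n(X)$ forward under $q$, because $q$ is a quotient map that identifies all of $F_1(X)$ to the single point $F_X$, so $q(\mathcal{U})$ need not be open when $\mathcal{U}$ meets $F_1(X)$. The fix, exactly as in the earlier transitivity theorems of this paper, is to replace $\mathcal{U}$ by a basic open set $\langle W_1,\dots,W_n\rangle_n$ that is disjoint from $F_1(X)$ (arranging the $W_i$ to pairwise intersect so that every member really has at most $n$ points while using all $n$ coordinates), on which $q$ restricts to the homeomorphism of Remark~\ref{remark2}; after that the argument is a routine transfer along $q\circ F_n(f)^k=SF_n(f)^k\circ q$.
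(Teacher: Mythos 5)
Your proof of the only part the paper actually proves, namely (3)$\Rightarrow$(2), is essentially the paper's own argument (shrink $\mathcal{U}$ to a basic open set $\langle W_1,\dots,W_n\rangle_n$ missing $F_1(X)$, push it forward by $q$, and transfer along $q\circ F_n(f)^k=SF_n(f)^k\circ q$ using Remarks \ref{remark1} and \ref{remark2}), while the remaining implications, which the paper just cites from \cite[Theorem 4.7]{a16}, you prove directly and correctly, the irrational-rotation example being the standard counterexample for (1)$\not\Rightarrow$(2). One slip (shared with the paper's own wording): the $W_i$ must be chosen pairwise \emph{disjoint} (or at least with $\bigcap_{i=1}^{n}W_i=\emptyset$), not ``pairwise intersecting,'' since a point $x$ common to all $W_i$ would give $\{x\}\in\langle W_1,\dots,W_n\rangle_n\cap F_1(X)$; with that correction your argument is complete.
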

\begin{proof}
By \cite[Theorem 4.7]{a16}, we need only to prove that (3) implies (2). Let $\mathcal{U}$ be nonempty open subsets of $F_n(X)$. By \cite[Lemma4.2]{a4}, there exist nonempty open subsets $U_1,U_2,...,U_n$ of $X$ such that $\langle U_1,U_2,...,U_n\rangle_n\subseteq \mathcal{U}$. For every $i\in\{1,2,...,n\}$, let $W_i$ be a nonempty open subset of $X$ such that $W_i\subseteq U_i$ and for any $i,j\in\{1,2,...,n\}$, $W_i\cap W_j\neq\emptyset$ if $i\neq j$. Note that $\langle U_1,U_2,...,U_n\rangle_n$ is nonempty open subsets of $F_n(X)$ such that $\langle W_1,W_2,...,W_n\rangle_n\subseteq \langle U_1,U_2,...,U_n\rangle_n\subseteq \mathcal{U}$ and $\langle W_1,W_2,...,W_n\rangle_n\cap F_1(X)=\emptyset$. By Remark \ref{remark2}, $q(\langle W_1,W_2,...,W_n\rangle_n)$ is nonempty open subsets of $SF_n(X)$ and $F_X\notin q(\langle W_1,W_2,...,W_n\rangle_n)$. By hypothesis, $orb(q(B),SF_n(f))\cap q(\langle W_1,W_2,...,W_n\rangle_n)\neq\emptyset$. Thus, there exist $k\in\mathbb{Z_+}$ and $\chi\in q(\langle W_1,W_2,...,W_n\rangle_n)$ such that $SF_n(f)^k(q(B))\in q(\langle W_1,W_2,...,W_n\rangle_n)$ and $SF_n(f)^k(q(B))=\chi$. Thus, there exist $A\in\mathcal{U}$ such that $q(A)=\chi$ and by Remark \ref{remark1}, $q\circ F_n(f)^k(B)=\chi$. By Remark \ref{remark2}, $F_n(f)^k(B)=A$, which implies that $F_n(f)^k(B)\in \mathcal{U}$. Therefore, $B$ is a transitive point of $F_n(f)$.

\end{proof}

\begin{theorem}
Let $X$ be a compactum, let $n\geq2$, let $B\in F_n(X)$, and let $f: X \rightarrow X$ be a function. Consider the following statements:

(1)there exists $x\in X$ such that $\omega(x,f)=X$.

(2)there exists $A\in F_n(X)$ such that $\omega(A,F_n(f))=F_n(X)$.

(3)there exists $\varphi\in SF_n(X)$ such that $\omega(\varphi,SF_n(f))=SF_n(X)$.

Then the following hold: (2) and (3) are equivalent, (2) implies (1).
\end{theorem}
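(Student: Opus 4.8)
The plan is to follow the pattern already used repeatedly in this paper: establish the equivalence of (2) and (3) by transporting $\omega$-limit sets through the quotient map $q$ using Remark \ref{remark1} and Remark \ref{remark2}, and then deduce (1) from (2) by pushing an $\omega$-limit set in $F_n(X)$ down to $X$ via the union map. The key observation throughout is that membership $y\in\omega(x,f)$ is characterized by: for every open $U$ with $y\in U$ and every $k\in\mathbb{N}$, there is $m\geq k$ with $f^m(x)\in U$; so the argument is a matter of translating open sets back and forth.

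First I would prove that (2) implies (3). Given $A\in F_n(X)$ with $\omega(A,F_n(f))=F_n(X)$, set $\varphi=q(A)$. Let $\Gamma$ be a nonempty open subset of $SF_n(X)$ containing a prescribed point $\psi$, and let $k\in\mathbb{N}$. Then $q^{-1}(\Gamma)$ is open in $F_n(X)$; using \cite[Lemma4.2]{a4} choose a basic open set $\langle W_1,\ldots,W_n\rangle_n\subseteq q^{-1}(\Gamma)$ with the $W_i$ mutually intersecting so that $\langle W_1,\ldots,W_n\rangle_n\cap F_1(X)=\emptyset$. Since this set is a nonempty open neighborhood of some point of $F_n(X)$ and $\omega(A,F_n(f))=F_n(X)$, there is $m\geq k$ with $F_n(f)^m(A)\in\langle W_1,\ldots,W_n\rangle_n\subseteq q^{-1}(\Gamma)$; applying $q$ and Remark \ref{remark1} gives $SF_n(f)^m(q(A))=q\circ F_n(f)^m(A)\in\Gamma$. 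As $\psi$ and $\Gamma$ were arbitrary, $\omega(\varphi,SF_n(f))=SF_n(X)$. For (3) implies (2), given $\varphi$ with $\omega(\varphi,SF_n(f))=SF_n(X)$, pick $A\in q^{-1}(\varphi)$; for a nonempty open $\mathcal{U}\subseteq F_n(X)$ extract $\langle W_1,\ldots,W_n\rangle_n\subseteq\mathcal{U}$ disjoint from $F_1(X)$, so by Remark \ref{remark2} the set $q(\langle W_1,\ldots,W_n\rangle_n)$ is open in $SF_n(X)$ and misses $F_X$; then for any $k$ there is $m\geq k$ with $SF_n(f)^m(\varphi)\in q(\langle W_1,\ldots,W_n\rangle_n)$, and Remark \ref{remark1} together with Remark \ref{remark2} (injectivity of $q$ off $F_1(X)$) yields $F_n(f)^m(A)\in\langle W_1,\ldots,W_n\rangle_n\subseteq\mathcal{U}$, so $\omega(A,F_n(f))=F_n(X)$.

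For (2) implies (1), suppose $\omega(A,F_n(f))=F_n(X)$ and pick any $x\in\cup A$ (i.e. any point of the finite set $A$). Given a nonempty open $U\subseteq X$ and $k\in\mathbb{N}$, the set $\langle U\rangle_n$ is open in $F_n(X)$ and nonempty, so there is $m\geq k$ with $F_n(f)^m(A)\in\langle U\rangle_n$, meaning $f^m(A)\subseteq U$; in particular $f^m(x)\in U$. Hence every point of $X$ lies in $\omega(x,f)$, i.e. $\omega(x,f)=X$. This mirrors the transitive-point argument in Theorem \ref{dltr}, except that here one does not even need a density statement for $\cup A$ — a single point of $A$ suffices since $\langle U\rangle_n$ forces the whole image into $U$.

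I expect no serious obstacle: the only delicate points are (i) ensuring the basic open sets used in the $SF_n$ direction avoid $F_1(X)$ so that $q$ restricted there is a homeomorphism (handled exactly as in the earlier theorems by shrinking to mutually intersecting $W_i$), and (ii) being careful that $\omega$-limit membership requires arbitrarily large return times $m\geq k$, which is automatic here because each relevant open set in $F_n(X)$ or $SF_n(X)$ is itself a nonempty open set and $\omega(A,F_n(f))$ or $\omega(\varphi,SF_n(f))$ equals the whole space. The failure of (1) $\Rightarrow$ (2) is not asserted in the statement, so nothing need be proved there.
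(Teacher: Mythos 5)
Your argument is correct and takes essentially the same route as the paper: the paper simply cites \cite[Theorem 4.8]{a16} for (2)$\Rightarrow$(3) and (2)$\Rightarrow$(1) and proves only (3)$\Rightarrow$(2), which it does exactly as you do, by shrinking an open set $\mathcal{U}$ of $F_n(X)$ to a basic open set $\langle W_1,\dots,W_n\rangle_n$ missing $F_1(X)$, pushing it through $q$, and transferring the orbit back with Remarks \ref{remark1} and \ref{remark2} (and your direct proofs of the other two implications are fine). One small correction: to guarantee $\langle W_1,\dots,W_n\rangle_n\cap F_1(X)=\emptyset$ the $W_i$ must be chosen pairwise \emph{disjoint} (as in the paper's indecomposability proof), not ``mutually intersecting''---a slip you share with the paper's own wording---and in your (2)$\Rightarrow$(3) direction this precaution is not needed at all, since $F_n(f)^m(A)\in q^{-1}(\Gamma)$ already yields $SF_n(f)^m(q(A))\in\Gamma$ by Remark \ref{remark1}.
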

\begin{proof}
By \cite[Theorem 4.8]{a16}, we need only to prove that (3) implies (2).
 Suppose that there exists $\varphi\in SF_n(X)$ such that $\omega(\varphi,SF_n(f))=SF_n(X)$, it is easy to see that $\varphi\neq F_X$, so $q^{-1}(\varphi)\in F_n(X)$ and $q^{-1}(\varphi)\notin F_1(X)$. Next we show that $\omega(q^{-1}(\varphi),F_n(f))=F_n(X)$. Let $B\in F_n(X)$, let $k\in\mathbb{N}$ and let $\mathcal{U}$ be an open subset of $F_n(X)$ with $B\in \mathcal{U}$. By \cite[Lemma4.2]{a4}, there exist nonempty open subsets $U_1,U_2,...,U_n$ of $X$ such that $\langle U_1,U_2,...,U_n\rangle_n\subseteq \mathcal{U}$. For every $i\in\{1,2,...,n\}$, let $W_i$ be a nonempty open subset of $X$ such that $W_i\subseteq U_i$ and for any $i,j\in\{1,2,...,n\}$, $W_i\cap W_j\neq\emptyset$ if $i\neq j$. Note that $\langle U_1,U_2,...,U_n\rangle_n$ is nonempty open subsets of $F_n(X)$ such that $\langle W_1,W_2,...,W_n\rangle_n\subseteq \langle U_1,U_2,...,U_n\rangle_n\subseteq \mathcal{U}$ and $\langle W_1,W_2,...,W_n\rangle_n\cap F_1(X)=\emptyset$. By Remark \ref{remark2}, $q(\langle W_1,W_2,...,W_n\rangle_n)$ is nonempty open subsets of $SF_n(X)$ and $F_X\notin q(\langle W_1,W_2,...,W_n\rangle_n)$. Since $\omega(\varphi,SF_n(f))=SF_n(X)$, there exist $m\geq k$ such that $SF_n(f)^k(\varphi)\in q(\langle W_1,W_2,...,W_n\rangle_n)$, i.e., $SF_n(f)^m(q\cdot q^{-1}(\varphi))\in q(\langle W_1,W_2,...,W_n\rangle_n)$. By Remark \ref{remark1}, we have that $q\cdot F_n(f)^m( q^{-1}(\varphi))\in q(\langle W_1,W_2,...,W_n\rangle_n)$. By Remark \ref{remark2}, we obtain that $F_n(f)^m( q^{-1}(\varphi))\in \langle W_1,W_2,...,W_n\rangle_n$. Therefore, $F_n(f)^m(q^{-1}(\varphi))\in \mathcal{U}$, which implies that $\omega(q^{-1}(\varphi),F_n(f))=F_n(X)$.

\end{proof}

\begin{theorem}
Let $X$ be a compactum, let $n\geq2$, let $B\in F_n(X)$, and let $f: X \rightarrow X$ be a function. Consider the following statements:

(1)$trans(f)$ is dense in $X$.

(2)$trans(F_n(f))$ is dense in $F_n(X)$.

(3)$trans(SF_n(f))$ is dense in $SF_n(X)$.

Then (2) and (3) are equivalent, (2) implies (1), but (1) does not imply (2).
\end{theorem}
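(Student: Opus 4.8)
The plan is to reduce the entire statement to the pointwise correspondence already established in Theorem \ref{dltr}: a point $B\in F_n(X)\setminus F_1(X)$ is a transitive point of $F_n(f)$ if and only if $q(B)$ is a transitive point of $SF_n(f)$, and a transitive point of $F_n(f)$ consists entirely of transitive points of $f$. The relationship between (1) and (2) is contained in \cite[Theorem 4.11]{a16}; it can also be seen directly. For (2)$\Rightarrow$(1): given a nonempty open $U\subseteq X$, the set $\langle U\rangle_n$ is nonempty open in $F_n(X)$, so density of $trans(F_n(f))$ gives some $B\in trans(F_n(f))\cap\langle U\rangle_n$, and by Theorem \ref{dltr} every point of $B\subseteq U$ lies in $trans(f)$, so $trans(f)\cap U\neq\emptyset$. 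For the failure of (1)$\Rightarrow$(2): take $f$ to be an irrational rotation of $S^1$, which is minimal, so $trans(f)=S^1$; but $f$ is not weakly mixing, so by Theorem \ref{Thz} $F_n(f)$ is not $\mathbb{Z}$-transitive, hence not transitive; since $F_n(X)$ has no isolated points, the existence of a transitive point would already force topological transitivity, so $trans(F_n(f))=\emptyset$ and in particular is not dense. Thus the real content is the equivalence (2)$\Leftrightarrow$(3).

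For (2)$\Rightarrow$(3), let $\Gamma$ be a nonempty open subset of $SF_n(X)$. Since $q$ is continuous and surjective, $q^{-1}(\Gamma)$ is nonempty open in $F_n(X)$, so by \cite[Lemma 4.2]{a4} it contains a basic open set $\langle U_1,\dots,U_n\rangle_n$; shrinking each $U_i$ to a nonempty open $W_i\subseteq U_i$ with the $W_i$ pairwise disjoint, we obtain $\langle W_1,\dots,W_n\rangle_n\subseteq q^{-1}(\Gamma)$ and $\langle W_1,\dots,W_n\rangle_n\cap F_1(X)=\emptyset$. By Remark \ref{remark2}, $q(\langle W_1,\dots,W_n\rangle_n)$ is a nonempty open subset of $SF_n(X)$ with $F_X\notin q(\langle W_1,\dots,W_n\rangle_n)$, and it lies in $\Gamma$ because $q$ is surjective. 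Density of $trans(F_n(f))$ gives $B\in trans(F_n(f))\cap\langle W_1,\dots,W_n\rangle_n$; by Theorem \ref{dltr}, $q(B)\in trans(SF_n(f))$, and $q(B)\in q(\langle W_1,\dots,W_n\rangle_n)\subseteq\Gamma$. As $\Gamma$ was arbitrary, $trans(SF_n(f))$ is dense in $SF_n(X)$.

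For (3)$\Rightarrow$(2), let $\mathcal{U}$ be a nonempty open subset of $F_n(X)$ and, exactly as above, build $\langle W_1,\dots,W_n\rangle_n\subseteq\mathcal{U}$ with $\langle W_1,\dots,W_n\rangle_n\cap F_1(X)=\emptyset$, so that $q(\langle W_1,\dots,W_n\rangle_n)$ is a nonempty open subset of $SF_n(X)$ not containing $F_X$. Density of $trans(SF_n(f))$ gives a transitive point $\chi$ of $SF_n(f)$ in $q(\langle W_1,\dots,W_n\rangle_n)$. Since $\chi\neq F_X$, $q^{-1}(\chi)$ is a single point of $F_n(X)\setminus F_1(X)$ which, by Remark \ref{remark2}, lies in $\langle W_1,\dots,W_n\rangle_n\subseteq\mathcal{U}$; by Theorem \ref{dltr} it is a transitive point of $F_n(f)$. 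Hence $trans(F_n(f))\cap\mathcal{U}\neq\emptyset$, so $trans(F_n(f))$ is dense in $F_n(X)$.

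The two passages between basic open sets of $F_n(X)$ and open sets of $SF_n(X)$ are the same routine manipulations used in Theorem \ref{dltr} and the earlier theorems, and present no difficulty. The only genuinely delicate point is the non-implication (1)$\not\Rightarrow$(2): it is not enough that $F_n(f)$ fails to be transitive; one has to observe that density of $trans(F_n(f))$ would already produce a dense orbit, hence transitivity of $F_n(f)$, which the irrational rotation (equivalently, \cite[Theorem 4.11]{a16} together with Theorem \ref{Thz}) contradicts. Everything else is a direct consequence of Theorem \ref{dltr}.
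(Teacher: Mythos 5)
Your proposal is correct, and its mathematical heart coincides with the paper's: the only part the paper actually proves here is (3)$\Rightarrow$(2), and your argument for it is the same one — take a basic open set $\langle W_1,\dots,W_n\rangle_n\subseteq\mathcal{U}$ disjoint from $F_1(X)$, note $q(\langle W_1,\dots,W_n\rangle_n)$ is open in $SF_n(X)$ and misses $F_X$, pick a transitive point $\chi$ of $SF_n(f)$ there, and transfer it back through $q$ using Theorem \ref{dltr}. Where you diverge is in the parts the paper delegates to citations: the paper invokes \cite[Theorem 4.11]{a16} for (2)$\Rightarrow$(3) and (2)$\Rightarrow$(1), and \cite[Remark 4.3]{a16} for an example where $trans(f)$ is dense but $trans(SF_n(f))=\emptyset$, whereas you reprove (2)$\Rightarrow$(3) by the symmetric pull-back/push-forward argument, get (2)$\Rightarrow$(1) directly from $\langle U\rangle_n$ and Theorem \ref{dltr}, and build the counterexample yourself from the irrational rotation: $trans(f)=S^1$ by minimality, $F_n(f)$ is not transitive by Theorem \ref{Thz} (the rotation is not weakly mixing), and — this is the genuinely delicate point, which you correctly isolate — since $F_n(S^1)$ is a compact metric space without isolated points, a single dense orbit would force transitivity of $F_n(f)$, so $trans(F_n(f))=\emptyset$. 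That last fact is exactly the ``point transitive $=$ transitive'' principle the paper itself invokes before its Theorem on Martelli's chaos, so your use of it is consistent with the paper's framework. In short: same key mechanism for the new implication, with your write-up being more self-contained (at the cost of length) and the paper's shorter by leaning on \cite{a16}; both are valid.
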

\begin{proof}
By \cite[Theorem 4.11]{a16}, we need only to prove that (3) implies (2) and that (1) does not imply (2).

Let $\mathcal{U}$ be an open subset of $F_n(X)$. By \cite[Lemma4.2]{a4}, there exist nonempty open subsets $U_1,U_2,...,U_n$ of $X$ such that $\langle U_1,U_2,...,U_n\rangle_n\subseteq \mathcal{U}$. For every $i\in\{1,2,...,n\}$, let $W_i$ be a nonempty open subset of $X$ such that $W_i\subseteq U_i$ and for any $i,j\in\{1,2,...,n\}$, $W_i\cap W_j\neq\emptyset$ if $i\neq j$. Note that $\langle U_1,U_2,...,U_n\rangle_n$ is nonempty open subsets of $F_n(X)$ such that $\langle W_1,W_2,...,W_n\rangle_n\subseteq \langle U_1,U_2,...,U_n\rangle_n\subseteq \mathcal{U}$ and $\langle W_1,W_2,...,W_n\rangle_n\cap F_1(X)=\emptyset$. By Remark \ref{remark2}, $q(\langle W_1,W_2,...,W_n\rangle_n)$ is nonempty open subsets of $SF_n(X)$ and $F_X\notin q(\langle W_1,W_2,...,W_n\rangle_n)$. Since $trans(SF_n(f))$ is dense in $SF_n(X)$, $trans(SF_n(f))\cap q(\langle W_1,W_2,...,W_n\rangle_n)\neq\emptyset$. Let $\chi\in trans(SF_n(f))\cap q(\langle W_1,W_2,...,W_n\rangle_n)$. Hence, there exists $A\in \langle W_1,W_2,...,W_n\rangle_n$ such that $\chi=q(A)$. Since $\langle W_1,W_2,...,W_n\rangle_n\subseteq \mathcal{U}$, $A\in \mathcal{U}$. On the other hand, by Theorem \ref{dltr}, we have $A\in trans(F_n(f))$. Thus, $trans(SF_n(f))\cap \mathcal{U}\neq\emptyset$. Since $\mathcal{U}$ is arbitrary we have that $trans(F_n(f))$ is dense in $F_n(X)$.

By \cite[Remark 4.3]{a16}, we have that $trans(f)$ is dense in $X$ but $trans(SF_n(f))=\emptyset$, which shows that (1) does not imply (2).

\end{proof}

\begin{theorem}
Let $Y$ be a topological space, let $X$ be a compactum, let $n\geq2$, let $f: X \rightarrow X$  and $g: Y \rightarrow Y$ be functions. Consider the following statements:

(1)$f\times g$ is transitive.

(2)$F_n(f)\times g$ is transitive.

(3)$SF_n(f)\times g$ is transitive.

Then (2) and (3) are equivalent, (2) implies (1), but (1) does not imply (2).
\end{theorem}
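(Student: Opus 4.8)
The plan is to follow the template of the preceding theorems, so the work splits into the part about $f$ and $F_n(f)$ — which is \cite[Theorem 4.12]{a16} — and the new equivalence (2) $\Leftrightarrow$ (3). For the former: (2) $\Rightarrow$ (1) is the routine fibre argument, namely apply transitivity of $F_n(f)\times g$ to the boxes $\langle U_1\rangle_n\times V_1$ and $\langle U_2\rangle_n\times V_2$, obtain a set $A\subseteq U_1$ with $f^{k}(A)\subseteq U_2$ and $g^{k}(V_1)\cap V_2\neq\emptyset$, and pull out a point of $A$; and (1) $\not\Rightarrow$ (2) is witnessed by the irrational rotation of \cite[Example 4.6]{a5} paired with a one-point factor $(Y,g)$: then $f\times g$ is transitive but, since that $f$ is not weakly mixing, $F_n(f)\times g$ is not transitive by Theorem \ref{thdj}. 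Throughout the equivalence part the auxiliary system $(Y,g)$ is inert: transitivity of a product $h\times g$ applied to basic open boxes yields a single exponent $k$ that serves both coordinates, so the condition $g^{k}(V_1)\cap V_2\neq\emptyset$ is merely transported.

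For (2) $\Rightarrow$ (3): given nonempty open boxes $\Gamma_1\times V_1$, $\Gamma_2\times V_2$ in $SF_n(X)\times Y$, the sets $q^{-1}(\Gamma_i)$ are nonempty open in $F_n(X)$, so by \cite[Lemma 4.2]{a4} and the usual shrinking (possible since $X$ is perfect and Hausdorff) they contain basic sets $\langle W_1^{i},\dots,W_n^{i}\rangle_n$ with pairwise disjoint coordinates, hence disjoint from $F_1(X)$. Transitivity of $F_n(f)\times g$ produces $k\in\mathbb{N}$, a set $A\in\langle W_1^{1},\dots,W_n^{1}\rangle_n$ with $F_n(f)^{k}(A)\in\langle W_1^{2},\dots,W_n^{2}\rangle_n$, and $g^{k}(V_1)\cap V_2\neq\emptyset$. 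Since $A\notin F_1(X)$ we get $q(A)\in\Gamma_1$ and, by Remark \ref{remark1}, $SF_n(f)^{k}(q(A))=q(F_n(f)^{k}(A))\in q(\langle W_1^{2},\dots,W_n^{2}\rangle_n)\subseteq\Gamma_2$, which together with $g^{k}(V_1)\cap V_2\neq\emptyset$ gives $(SF_n(f)\times g)^{k}(\Gamma_1\times V_1)\cap(\Gamma_2\times V_2)\neq\emptyset$.

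For (3) $\Rightarrow$ (2): given nonempty open boxes $\mathcal{U}\times V_1$, $\mathcal{V}\times V_2$ in $F_n(X)\times Y$, find as above basic sets $\langle W_1,\dots,W_n\rangle_n\subseteq\mathcal{U}$ and $\langle O_1,\dots,O_n\rangle_n\subseteq\mathcal{V}$ disjoint from $F_1(X)$; by Remark \ref{remark2} their $q$-images are nonempty open in $SF_n(X)$ and avoid $F_X$. Transitivity of $SF_n(f)\times g$ gives $k$ with $SF_n(f)^{k}(q(\langle W_1,\dots,W_n\rangle_n))\cap q(\langle O_1,\dots,O_n\rangle_n)\neq\emptyset$ and $g^{k}(V_1)\cap V_2\neq\emptyset$; by Remark \ref{remark1} the first reads $q(F_n(f)^{k}(\langle W_1,\dots,W_n\rangle_n))\cap q(\langle O_1,\dots,O_n\rangle_n)\neq\emptyset$, and since every point of this set is different from $F_X$ and $q$ is injective off $F_1(X)$ (Remark \ref{remark2}), we recover $F_n(f)^{k}(\langle W_1,\dots,W_n\rangle_n)\cap\langle O_1,\dots,O_n\rangle_n\neq\emptyset$, hence $F_n(f)^{k}(\mathcal{U})\cap\mathcal{V}\neq\emptyset$; combined with $g^{k}(V_1)\cap V_2\neq\emptyset$ this is transitivity of $F_n(f)\times g$.

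I expect the one delicate point to be the bookkeeping of open sets across the quotient: one must choose the basic open sets so that they miss $F_1(X)$ — equivalently, so that their $q$-images miss $F_X$ — since only then does Remark \ref{remark2} make $q$ a homeomorphism between them and their images, which is what allows Remark \ref{remark1}'s identity $q\circ F_n(f)^{k}=SF_n(f)^{k}\circ q$ to be used in both directions, in particular to turn an intersection living in $SF_n(X)$ back into a genuine intersection in $F_n(X)$. Apart from this device, which is exactly the one already used in the proofs of the $\mathbb{Z}$-transitive, $\bigtriangleup$-transitive and transitive-point theorems above, no new difficulty arises, and the system $(Y,g)$ never does anything beyond supplying and receiving the common exponent.
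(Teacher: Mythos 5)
Your proposal is correct and, on the core equivalence, follows essentially the same route as the paper: the paper likewise reduces everything to (3) $\Rightarrow$ (2) (it cites \cite[Theorem 4.12]{a16} for (2) $\Rightarrow$ (1) and (2) $\Rightarrow$ (3), which you instead write out) and its proof of (3) $\Rightarrow$ (2) is exactly your argument — basic open sets from \cite[Lemma 4.2]{a4}, shrunk so as to miss $F_1(X)$, Remark \ref{remark2} to make the $q$-images open and $q$ injective there, Remark \ref{remark1} to transport the hitting time, with the $Y$-coordinate carried along passively. The only genuine divergence is the witness for (1) $\not\Rightarrow$ (2): you pair the irrational rotation with a one-point factor $(Y,g)$, while the paper pairs it with a mixing map $g$. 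Your choice makes the verification trivial ($f\times g$ is then effectively $f$, which is transitive, while $F_n(f)\times g$ is effectively $F_n(f)$, not transitive by Theorem \ref{thdj}), and it is admissible under the theorem's literal hypothesis that $Y$ is merely a topological space; but it rests on a degenerate $Y$, whereas the paper's conventions elsewhere take phase spaces nondegenerate, so its mixing $g$ is the more robust witness — at the cost of the (easy, unstated) fact that $N_g(V_1,V_2)$ being cofinite and $N_f(U_1,U_2)$ being infinite force $f\times g$ to be transitive. If you want to stay within the paper's framework, simply replace your one-point factor by any mixing system on a nondegenerate space; nothing else in your argument changes.
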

\begin{proof}
By \cite[Theorem 4.12]{a16}, we need only to prove that (3) implies (2) and that (1) does not imply (2).
Let $\mathcal{U}\times U$ and $\mathcal{V}\times V$ be nonempty open subsets of $F_n(X)\times Y$.
By \cite[Lemma4.2]{a4}, there exist nonempty open subsets $U_1,U_2,...,U_n$ and $V_1,V_2,...,V_n$ of $X$ such that $\langle U_1,U_2,...,U_n\rangle_n\subseteq \mathcal{U}$ and $\langle V_1,V_2,...,V_n\rangle_n\subseteq \mathcal{V}$. For every $i\in\{1,2,...,n\}$, let $W_i$ be a nonempty open subset of $X$ such that $W_i\subseteq U_i$ and for any $i,j\in\{1,2,...,n\}$, $W_i\cap W_j\neq\emptyset$ if $i\neq j$. Similarly, for each $i\in\{1,2,...,n\}$, let $O_i$ be a nonempty open subset of $X$ such that $O_i\subseteq V_i$ and for any $i,j\in\{1,2,...,n\}$, $O_i\cap O_j\neq\emptyset$ if $i\neq j$. Note that $\langle U_1,U_2,...,U_n\rangle_n\times U$ and $\langle V_1,V_2,...,V_n\rangle_n\times V$ are nonempty open subsets of $F_n(X)\times Y$ such that $\langle W_1,W_2,...,W_n\rangle_n\subseteq \langle U_1,U_2,...,U_n\rangle_n\subseteq \mathcal{U}$, $\langle O_1,O_2,...,O_n\rangle_n\subseteq \langle V_1,V_2,...,V_n\rangle_n\subseteq \mathcal{V}$, $\langle W_1,W_2,...,W_n\rangle_n\cap F_1(X)=\emptyset$, and $\langle O_1,O_2,...,O_n\rangle_n\cap F_1(X)=\emptyset$. By Remark \ref{remark2}, $q(\langle W_1,W_2,...,W_n\rangle_n)\times U$ and $q(\langle O_1,O_2,...,O_n\rangle_n)\times V$ are nonempty open subsets of $SF_n(X)\times Y$ such that $F_X\notin q(\langle W_1,W_2,...,W_n\rangle_n)$ and $F_X\notin q(\langle O_1,O_2,...,O_n\rangle_n)$.
Since $SF_n(f)\times g$ is transitive, there exist $k\in\mathbb{N}$ and $(\chi,y)\in q(\langle W_1,W_2,...,W_n\rangle_n)\times U$ such that $(SF_n(f)\times g)^k((\chi,y))\in q(\langle O_1,O_2,...,O_n\rangle_n)\times V$. Further, there exist $A\in\mathcal{U}$ and $B\in\mathcal{V}$ such that $q(A)=\chi$ and $SF_n(f)^k(\chi)=q(B)$. By Remark \ref{remark1}, we have $q\circ F_n(f)^k(A)=q(B)$. By Remark \ref{remark2}, $F_n(f)^k(A)=B$, which implies that $F_n(f)^k(A)\in \mathcal{V}$. Therefore, $(F_n(f)\times g)^k((A,y))\in \mathcal{V}\times V$. Thus, $F_n(f)\times g$ is transitive.

Let $f$ be the irrational rotation map and let $g$ be mixing map, then $f\times g$ is transitive. But $F_n(f)$ is not transitive, further, $F_n(f)\times g$ is not transitive. Therefore, (1) does not imply (2).

\end{proof}

\begin{theorem}
Let $X$ be a compactum, let $n\geq2$, let $A\in F_n(X)$, and let $f: X \rightarrow X$ be a function. Consider the following statements:

(1)$f$ is an $F$-system.

(2)$F_n(f)$ is an $F$-system.

(3)$SF_n(f)$ is an $F$-system.

Then the following hold: (2) and (3) are equivalent, (2) implies (1).
\end{theorem}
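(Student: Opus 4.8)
The plan is to decompose the $F$-system property into its two defining ingredients, namely total transitivity and density of the periodic set, and to treat each separately. The total-transitivity halves are already settled by Theorem~\ref{thdj}: there the statements that $f$ is weakly mixing, that $F_n(f)$ is totally transitive and that $SF_n(f)$ is totally transitive are shown to be equivalent, and since weakly mixing always implies totally transitive, total transitivity of $F_n(f)$ forces total transitivity of $f$ as well. So in each of the implications (2)$\Rightarrow$(1) and (2)$\Leftrightarrow$(3) it remains only to transfer density of periodic points, which I will carry out using the identities $q\circ F_n(f)^k=SF_n(f)^k\circ q$ of Remark~\ref{remark1} and the homeomorphism $SF_n(X)\setminus\{F_X\}\cong F_n(X)\setminus F_1(X)$ of Remark~\ref{remark2}.

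For (2)$\Rightarrow$(1), given a nonempty open $U\subseteq X$ I would look at the nonempty open set $\langle U\rangle_n\subseteq F_n(X)$, pick a periodic point $A$ of $F_n(f)$ inside it, say $f^k(A)=F_n(f)^k(A)=A$, and then observe that because $A$ is a finite set the map $f^k$ permutes $A$, so $f^{k\cdot|A|!}$ fixes every point of $A$; any point of $A$ is then a periodic point of $f$ lying in $U$. Hence $Per(f)$ is dense, and combined with total transitivity of $f$ this gives that $f$ is an $F$-system.

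For (2)$\Rightarrow$(3), I would take a nonempty open $\Gamma\subseteq SF_n(X)$; if $F_X\in\Gamma$ then $F_X$ is a fixed (hence periodic) point of $SF_n(f)$ already in $\Gamma$, and otherwise $q^{-1}(\Gamma)$ is a nonempty open subset of $F_n(X)$ disjoint from $F_1(X)$, so it contains a periodic point $A$ of $F_n(f)$, and Remark~\ref{remark1} gives $SF_n(f)^k(q(A))=q(F_n(f)^k(A))=q(A)$, so that $q(A)\in\Gamma$ is periodic for $SF_n(f)$. For the converse (3)$\Rightarrow$(2), given a nonempty open $\mathcal{U}\subseteq F_n(X)$ I would use \cite[Lemma 4.2]{a4} to get nonempty open $U_1,...,U_n\subseteq X$ with $\langle U_1,...,U_n\rangle_n\subseteq\mathcal{U}$, shrink the $U_i$ to nonempty open $W_i\subseteq U_i$ chosen so that $\langle W_1,...,W_n\rangle_n\subseteq\mathcal{U}$ is nonempty open with $\langle W_1,...,W_n\rangle_n\cap F_1(X)=\emptyset$ (for instance, with the $W_i$ pairwise disjoint), pass to the nonempty open set $q(\langle W_1,...,W_n\rangle_n)\subseteq SF_n(X)$, which misses $F_X$ by Remark~\ref{remark2}, pick a periodic point $\chi=q(B)$ of $SF_n(f)$ there with $B\in\langle W_1,...,W_n\rangle_n$, and finally use Remark~\ref{remark1} together with the injectivity of $q$ off $F_1(X)$ to deduce from $SF_n(f)^k(\chi)=\chi$ that $F_n(f)^k(B)=B$; thus $B\in\mathcal{U}$ is periodic for $F_n(f)$, and $Per(F_n(f))$ is dense.

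The step I expect to cause the most trouble is the bookkeeping around the basepoint $F_X$ and the collapsed fibre $F_1(X)$. Pushing a periodic point of $SF_n(f)$ back to $F_n(f)$ requires the approximating open set to sit inside $F_n(X)\setminus F_1(X)$, where $q$ is a homeomorphism onto $SF_n(X)\setminus\{F_X\}$; this is exactly what the box $\langle W_1,...,W_n\rangle_n$ with disjoint factors is engineered to guarantee. Dually, in the forward direction one must remember that $F_X$ is itself a periodic point of $SF_n(f)$, so that neighbourhoods of the basepoint are automatically handled. Beyond this everything is routine, since Theorem~\ref{thdj} removes all the work on the transitivity side.
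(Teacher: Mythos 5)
Your proof is correct, and it differs from the paper's in an interesting way: the paper disposes of this theorem purely by citation, invoking \cite[Theorem 4.14]{a16} for everything except (3)$\Rightarrow$(2) and then \cite[Theorem 4.12]{a5} together with \cite[Lemma 4.16]{a5} for that remaining implication, whereas you give a self-contained argument based on the natural decomposition of the $F$-system property. On the transitivity side you route everything through Theorem \ref{thdj}, which is legitimate; the one extra ingredient you need and correctly flag is that weak mixing of $f$ implies total transitivity of $f$, which in this compact setting is exactly the fact the paper itself borrows from \cite[Theorem 3.3]{a17}, so you are not assuming more than the paper does. On the periodic-point side your three transfers all check out: for (2)$\Rightarrow$(1) the observation that $F_n(f)^k(A)=A$ makes $f^k$ a permutation of the finite set $A$, so $f^{k\cdot|A|!}$ fixes each point of $A$, is the standard and correct way to produce periodic points of $f$ in a prescribed open set; for (2)$\Rightarrow$(3) Remark \ref{remark1} pushes periodic points forward, with the basepoint case handled by noting $F_X$ is fixed; and for (3)$\Rightarrow$(2) the box $\langle W_1,\dots,W_n\rangle_n$ with \emph{pairwise disjoint} $W_i$ (you state disjointness correctly, where the paper repeatedly misprints $W_i\cap W_j\neq\emptyset$) keeps the argument inside $F_n(X)\setminus F_1(X)$, where $q$ is injective, so $SF_n(f)^k(q(B))=q(B)$ really does pull back to $F_n(f)^k(B)=B$. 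What the paper's route buys is brevity; what yours buys is an argument verifiable entirely within this paper's own toolkit (Theorem \ref{thdj} and Remarks \ref{remark1}--\ref{remark2}), independent of the precise statements of the external results, and it is the kind of proof the cited lemmas themselves encode.
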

\begin{proof}
By \cite[Theorem 4.14]{a16}, we need only to prove that (3) implies (2). By \cite[Theorem 4.12]{a5} and \cite[Lemma 4.16]{a5}, we get the result.

\end{proof}

\begin{theorem}
Let $X$ be a compactum, let $n\geq2$, let $B\in F_n(X)$, and let $f: X \rightarrow X$ be a function. Consider the following statements:

(1)$f$ is $TT_{++}$.

(2)$F_n(f)$ is $TT_{++}$.

(3)$SF_n(f)$ is $TT_{++}$.

Then (2) and (3) are equivalent, (2) implies (1), but (1) does not imply (2).
\end{theorem}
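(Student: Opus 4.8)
The plan is to follow the template of the preceding theorems, transporting open sets through the quotient map $q$, and to record first that $k\in n_f(U,V)$ exactly when $f^k(U)\cap V\neq\emptyset$; thus $f$ is $TT_{++}$ iff $\{k\in\mathbb{N}:f^k(U)\cap V\neq\emptyset\}$ is infinite for every pair of nonempty open $U,V$, and likewise for $F_n(f)$ and $SF_n(f)$. The relationship between (1) and (2) is of the kind already treated in \cite[Theorem 4.18]{a16}, so the substance is the equivalence of (2) and (3), together with the short verifications of (2)$\Rightarrow$(1) and of the failure of (1)$\Rightarrow$(2).

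For (2)$\Rightarrow$(3): given nonempty open $\Gamma,\Lambda\subseteq SF_n(X)$, pull them back to the nonempty open sets $q^{-1}(\Gamma),q^{-1}(\Lambda)$ of $F_n(X)$, use \cite[Lemma 4.2]{a4} to get boxes $\langle U_1,\dots,U_n\rangle_n\subseteq q^{-1}(\Gamma)$ and $\langle V_1,\dots,V_n\rangle_n\subseteq q^{-1}(\Lambda)$, and shrink each coordinate to $W_i\subseteq U_i$ and $O_i\subseteq V_i$ so that the $W_i$ pairwise meet and the $O_i$ pairwise meet; this forces $\langle W_1,\dots,W_n\rangle_n\cap F_1(X)=\emptyset=\langle O_1,\dots,O_n\rangle_n\cap F_1(X)$, so by Remark~\ref{remark2} the sets $q(\langle W_1,\dots,W_n\rangle_n)$ and $q(\langle O_1,\dots,O_n\rangle_n)$ are nonempty open subsets of $SF_n(X)$ missing $F_X$. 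Since $F_n(f)$ is $TT_{++}$, there are infinitely many $k$ with $F_n(f)^k(\langle W_1,\dots,W_n\rangle_n)\cap\langle O_1,\dots,O_n\rangle_n\neq\emptyset$; for each such $k$ pick $A\in\langle W_1,\dots,W_n\rangle_n$ with $F_n(f)^k(A)\in\langle O_1,\dots,O_n\rangle_n$, so that $A\notin F_1(X)$ and $F_n(f)^k(A)\notin F_1(X)$, and Remark~\ref{remark1} gives $SF_n(f)^k(q(A))=q(F_n(f)^k(A))\in q(\langle O_1,\dots,O_n\rangle_n)\subseteq\Lambda$ with $q(A)\in q(\langle W_1,\dots,W_n\rangle_n)\subseteq\Gamma$. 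Hence all those $k$ lie in $\{k:SF_n(f)^k(\Gamma)\cap\Lambda\neq\emptyset\}$, which is therefore infinite, and (3) holds. For (3)$\Rightarrow$(2) one runs the same construction starting from nonempty open $\mathcal{U},\mathcal{V}\subseteq F_n(X)$: shrink inside them to boxes $\langle W_1,\dots,W_n\rangle_n\subseteq\mathcal{U}$ and $\langle O_1,\dots,O_n\rangle_n\subseteq\mathcal{V}$ missing $F_1(X)$, push forward by $q$, use $TT_{++}$ of $SF_n(f)$ together with Remark~\ref{remark1} to get infinitely many $k$ with $q(F_n(f)^k(\langle W_1,\dots,W_n\rangle_n))\cap q(\langle O_1,\dots,O_n\rangle_n)\neq\emptyset$, and for each such $k$ pull the witness back using that $q$ is injective on $F_n(X)\setminus F_1(X)$ (Remark~\ref{remark2}) to obtain $F_n(f)^k(\langle W_1,\dots,W_n\rangle_n)\cap\langle O_1,\dots,O_n\rangle_n\neq\emptyset$, hence $F_n(f)^k(\mathcal{U})\cap\mathcal{V}\neq\emptyset$; so $\{k:F_n(f)^k(\mathcal{U})\cap\mathcal{V}\neq\emptyset\}$ is infinite.

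For (2)$\Rightarrow$(1): for nonempty open $U,V\subseteq X$, apply $TT_{++}$ of $F_n(f)$ to $\langle U\rangle_n$ and $\langle V\rangle_n$; whenever $F_n(f)^k(\langle U\rangle_n)\cap\langle V\rangle_n\neq\emptyset$ there is $A\in\langle U\rangle_n$ with $f^k(A)\subseteq V$, and any $x\in A$ satisfies $x\in U$ and $f^k(x)\in V$, so $k\in n_f(U,V)$; thus $n_f(U,V)$ is infinite. For the failure of (1)$\Rightarrow$(2), take $f$ the irrational rotation of $S^1$ (the Example above): being minimal, $\omega(x,f)=S^1$ for every $x$, so for all nonempty open $U,V$ and any $x\in U$ the set $\{k:f^k(x)\in V\}$ is infinite and $f$ is $TT_{++}$; but $f$ is not weakly mixing, so $F_n(f)$ is not transitive (see \cite[Theorem 4.11]{a5} and Theorem~\ref{thdj}), hence not $TT_{++}$.

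The only real difficulty is the bookkeeping: one must make sure every box to which $q$ is applied has been shrunk to miss $F_1(X)$, so that Remark~\ref{remark2} keeps $q$ a homeomorphism — in particular injective — on the pieces involved; once that is arranged, the infinitude demanded by $TT_{++}$ transfers for free, since at each step one obtains an inclusion of return-time sets rather than a single return time.
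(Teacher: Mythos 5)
Your proposal is correct and, for the only part the paper actually proves (namely that (3) implies (2)), it follows essentially the same route: shrink boxes inside the given open sets so that they miss $F_1(X)$, transport through $q$ via Remarks \ref{remark1} and \ref{remark2}, and note that every $k$ in the infinite return-time set for $SF_n(f)$ lands in $n_{F_n(f)}(\mathcal{U},\mathcal{V})$, so the infinitude transfers wholesale. The paper disposes of (2) implies (3), (2) implies (1) and the failure of (1) implies (2) by citing \cite[Theorem 4.17]{a16}, whereas you give direct arguments (including the irrational rotation together with Theorem \ref{thdj} for the counterexample); these are sound. One wording fix is needed: the coordinates must be shrunk to \emph{pairwise disjoint} sets $W_i$ (and $O_i$), since a box $\langle W_1,\dots,W_n\rangle_n$ misses $F_1(X)$ precisely when $\bigcap_{i=1}^{n}W_i=\emptyset$, and ``pairwise meeting'' sets do not force this (take all $W_i$ equal); pairwise disjoint choices exist because $X$ is perfect and Hausdorff, and with that correction the rest of your argument goes through unchanged.
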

\begin{proof}
By \cite[Theorem 4.17]{a16}, we need only to prove that (3) implies (2).
Let $\mathcal{U},\mathcal{V}$ be nonempty open subsets of $F_n(X)$. By \cite[Lemma4.2]{a4}, there exist nonempty open subsets $U_1,U_2,...,U_n$ and $V_1,V_2,...,V_n$ of $X$ such that $\langle U_1,U_2,...,U_n\rangle_n\subseteq \mathcal{U}$ and $\langle V_1,V_2,...,V_n\rangle_n\subseteq \mathcal{V}$. For every $i\in\{1,2,...,n\}$, let $W_i$ be a nonempty open subset of $X$ such that $W_i\subseteq U_i$ and for any $i,j\in\{1,2,...,n\}$, $W_i\cap W_j\neq\emptyset$ if $i\neq j$. Similarly, for each $i\in\{1,2,...,n\}$, let $O_i$ be a nonempty open subset of $X$ such that $O_i\subseteq V_i$ and for any $i,j\in\{1,2,...,n\}$, $O_i\cap O_j\neq\emptyset$ if $i\neq j$. Note that $\langle U_1,U_2,...,U_n\rangle_n$ and $\langle V_1,V_2,...,V_n\rangle_n$ are nonempty open subsets of $F_n(X)$ such that $\langle W_1,W_2,...,W_n\rangle_n\subseteq \langle U_1,U_2,...,U_n\rangle_n\subseteq \mathcal{U}$, $\langle O_1,O_2,...,O_n\rangle_n\subseteq \langle V_1,V_2,...,V_n\rangle_n\subseteq \mathcal{V}$, $\langle W_1,W_2,...,W_n\rangle_n\cap F_1(X)=\emptyset$, and $\langle O_1,O_2,...,O_n\rangle_n\cap F_1(X)=\emptyset$. By Remark \ref{remark2}, $q(\langle W_1,W_2,...,W_n\rangle_n)$ and $q(\langle O_1,O_2,...,O_n\rangle_n)$ are nonempty open subsets of $SF_n(X)$ such that $F_X\notin q(\langle W_1,W_2,...,W_n\rangle_n)$ and $F_X\notin q(\langle O_1,O_2,...,O_n\rangle_n)$. Since $SF_n(f)$ is $TT_{++}$, there exists an infinite set $n_{SF_n(f)}(q(\langle W_1,W_2,...,W_n\rangle_n),q(\langle O_1,O_2,...,O_n\rangle_n))$. Let $k\in n_{SF_n(f)}(q(\langle W_1,W_2,...,W_n\rangle_n),q(\langle O_1,O_2,...,O_n\rangle_n))$, then $$SF_n(f)^k(q(\langle W_1,W_2,...,W_n\rangle_n))\cap q(\langle O_1,O_2,...,O_n\rangle_n)\neq\emptyset.$$ By Remark \ref{remark1}, it follows that $q\circ F_n(f)^k(\langle W_1,W_2,...,W_n\rangle_n)\cap q(\langle O_1,O_2,...,O_n\rangle_n)\neq\emptyset$. From Remark \ref{remark2}, we obtain that $F_n(f)^k(\langle W_1,W_2,...,W_n\rangle_n)\cap \langle O_1,O_2,...,O_n\rangle_n\neq\emptyset$, which implies that $F_n(f)^k(\mathcal{U})\cap \mathcal{V}\neq\emptyset$. Therefore, $k\in n_{F_n(f)}(\mathcal{U},\mathcal{V})$. Further, $n_{F_n(f)}(\mathcal{U},\mathcal{V})$ is infinite. Thus, $F_n(f)$ is $TT_{++}$.

\end{proof}

\begin{theorem}
Let $X$ be a compactum, let $n\geq2$, let $B\in F_n(X)$, and let $f: X \rightarrow X$ be a function. Consider the following statements:

(1)$f$ is Touhey.

(2)$F_n(f)$ is Touhey.

(3)$SF_n(f)$ is Touhey.

Then (2) and (3) are equivalent, (2) implies (1), but (1) does not imply (2).
\end{theorem}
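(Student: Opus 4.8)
The plan is to follow the pattern of the preceding theorems in this block: quote \cite[Theorem 4.21]{a16} for all the assertions except one, and supply only the implication $(3)\Rightarrow(2)$. For context, the other parts go as follows. The easy passage $(2)\Rightarrow(3)$ and the non-implication $(1)\not\Rightarrow(2)$ are in \cite[Theorem 4.21]{a16}; note that $(1)\not\Rightarrow(2)$ is witnessed by any $f$ that is Touhey (equivalently, transitive with dense set of periodic points) but not weakly mixing, because then $F_n(f)$ is not even transitive by Theorem \ref{Thz}, hence cannot be Touhey. And $(2)\Rightarrow(1)$ is immediate: if $A$ is a periodic point of $F_n(f)$, then $A$ is a finite subset of $X$ carried onto itself by some iterate $F_n(f)^{p}=f^{p}$, so $f^{p}|_{A}$ permutes the finite set $A$, whence $A\subseteq Per(f)$; applying the Touhey property of $F_n(f)$ to $\langle U\rangle_{n}$ and $\langle V\rangle_{n}$ then yields a periodic point of $f$ in $U$ with an iterate in $V$. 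So the real content is $(3)\Rightarrow(2)$.

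To prove $(3)\Rightarrow(2)$, assume $SF_n(f)$ is Touhey and let $\mathcal{U},\mathcal{V}$ be nonempty open subsets of $F_n(X)$. First, as in the proofs above, use \cite[Lemma 4.2]{a4} to find basic open sets $\langle U_1,\dots,U_n\rangle_n\subseteq\mathcal{U}$ and $\langle V_1,\dots,V_n\rangle_n\subseteq\mathcal{V}$; since $X$ is perfect and Hausdorff and $n\geq2$, shrink the $U_i$ to pairwise disjoint nonempty open sets $W_i\subseteq U_i$ and the $V_i$ to pairwise disjoint nonempty open sets $O_i\subseteq V_i$. Then $\langle W_1,\dots,W_n\rangle_n$ and $\langle O_1,\dots,O_n\rangle_n$ are nonempty open subsets of $F_n(X)$, each disjoint from $F_1(X)$ because every one of their elements has $n$ distinct points, so by Remark \ref{remark2} the images $q(\langle W_1,\dots,W_n\rangle_n)$ and $q(\langle O_1,\dots,O_n\rangle_n)$ are nonempty open subsets of $SF_n(X)$ not containing $F_X$. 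Next, apply the Touhey property of $SF_n(f)$ to this pair: this produces a periodic point $\chi\in q(\langle W_1,\dots,W_n\rangle_n)$ of $SF_n(f)$, of period $p$ say, together with some $k\in\mathbb{Z_+}$ such that $SF_n(f)^{k}(\chi)\in q(\langle O_1,\dots,O_n\rangle_n)$.

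The decisive step is then to lift $\chi$ back to a periodic point of $F_n(f)$. Since $\chi\neq F_X$, Remark \ref{remark2} gives a unique $A\in F_n(X)\setminus F_1(X)$ with $q(A)=\chi$, and $A\in\langle W_1,\dots,W_n\rangle_n\subseteq\mathcal{U}$. By Remark \ref{remark1}, $q(F_n(f)^{p}(A))=SF_n(f)^{p}(q(A))=SF_n(f)^{p}(\chi)=\chi\neq F_X$; as $q$ collapses precisely $F_1(X)$ onto $F_X$, this forces $F_n(f)^{p}(A)\notin F_1(X)$, and injectivity of $q$ off $F_1(X)$ then gives $F_n(f)^{p}(A)=A$, so $A\in Per(F_n(f))$. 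Running the same computation with $k$ in place of $p$ gives $q(F_n(f)^{k}(A))=SF_n(f)^{k}(\chi)\in q(\langle O_1,\dots,O_n\rangle_n)$, and since $\langle O_1,\dots,O_n\rangle_n\cap F_1(X)=\emptyset$ this forces $F_n(f)^{k}(A)\in\langle O_1,\dots,O_n\rangle_n\subseteq\mathcal{V}$. Hence $A\in\mathcal{U}$ is a periodic point of $F_n(f)$ with $F_n(f)^{k}(A)\in\mathcal{V}$, so $F_n(f)$ is Touhey. I expect this lifting to be the only genuinely delicate point: $q$ is not injective on $F_1(X)$, so one must ensure that all of the images that arise stay away from $F_X$, and that is exactly what the passage to pairwise disjoint $W_i$ and $O_i$ buys (it makes the basic sets avoid $F_1(X)$, hence their $q$-images avoid $F_X$). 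The reverse implication $(2)\Rightarrow(3)$ causes no trouble---pull a pair of nonempty open subsets of $SF_n(X)$ back along the continuous surjection $q$, apply the Touhey property of $F_n(f)$, and push the resulting periodic point and its iterate forward using $q\circ F_n(f)^{k}=SF_n(f)^{k}\circ q$---and is in any case part of \cite[Theorem 4.21]{a16}.
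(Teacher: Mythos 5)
Your proposal is correct and follows essentially the same route as the paper: reduce everything but $(3)\Rightarrow(2)$ to the cited results of \cite{a16}, then shrink $\mathcal{U},\mathcal{V}$ to basic open sets $\langle W_1,\dots,W_n\rangle_n$, $\langle O_1,\dots,O_n\rangle_n$ with pairwise disjoint coordinates so that they miss $F_1(X)$, apply the Touhey property of $SF_n(f)$ to their $q$-images, and pull the periodic point back through $q$. The only difference is that the paper cites \cite[Theorem 4.10]{a16} for the fact that $q^{-1}(\chi)$ is a periodic point of $F_n(f)$, whereas you verify this lifting directly via $q\circ F_n(f)^{p}=SF_n(f)^{p}\circ q$ and injectivity of $q$ off $F_1(X)$; both are valid.
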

\begin{proof}
By \cite[Theorem 4.18]{a16}, we need only to prove that (3) implies (2).
Let $\mathcal{U},\mathcal{V}$ be nonempty open subsets of $F_n(X)$. By \cite[Lemma4.2]{a4}, there exist nonempty open subsets $U_1,U_2,...,U_n$ and $V_1,V_2,...,V_n$ of $X$ such that $\langle U_1,U_2,...,U_n\rangle_n\subseteq \mathcal{U}$ and $\langle V_1,V_2,...,V_n\rangle_n\subseteq \mathcal{V}$. For every $i\in\{1,2,...,n\}$, let $W_i$ be a nonempty open subset of $X$ such that $W_i\subseteq U_i$ and for any $i,j\in\{1,2,...,n\}$, $W_i\cap W_j\neq\emptyset$ if $i\neq j$. Similarly, for each $i\in\{1,2,...,n\}$, let $O_i$ be a nonempty open subset of $X$ such that $O_i\subseteq V_i$ and for any $i,j\in\{1,2,...,n\}$, $O_i\cap O_j\neq\emptyset$ if $i\neq j$. Note that $\langle U_1,U_2,...,U_n\rangle_n$ and $\langle V_1,V_2,...,V_n\rangle_n$ are nonempty open subsets of $F_n(X)$ such that $\langle W_1,W_2,...,W_n\rangle_n\subseteq \langle U_1,U_2,...,U_n\rangle_n\subseteq \mathcal{U}$, $\langle O_1,O_2,...,O_n\rangle_n\subseteq \langle V_1,V_2,...,V_n\rangle_n\subseteq \mathcal{V}$, $\langle W_1,W_2,...,W_n\rangle_n\cap F_1(X)=\emptyset$, and $\langle O_1,O_2,...,O_n\rangle_n\cap F_1(X)=\emptyset$. By Remark \ref{remark2}, $q(\langle W_1,W_2,...,W_n\rangle_n)$ and $q(\langle O_1,O_2,...,O_n\rangle_n)$ are nonempty open subsets of $SF_n(X)$ such that $F_X\notin q(\langle W_1,W_2,...,W_n\rangle_n)$ and $F_X\notin q(\langle O_1,O_2,...,O_n\rangle_n)$. Since $SF_n(f)$ is Touhey, there exist a periodic point $\chi\in  q(\langle W_1,W_2,...,W_n\rangle_n)$ and $k\in\mathbb{Z_+}$ such that $SF_n(f)^k(\chi)\in q(\langle O_1,O_2,...,O_n\rangle_n)$.
By Remark \ref{remark1}, we have that $q\circ F_n(f)^k(q^{-1}(\chi))\in q(\langle O_1,O_2,...,O_n\rangle_n)$. By Remark \ref{remark2}, $F_n(f)^k(q^{-1}(\chi))\in \langle O_1,O_2,...,O_n\rangle_n$. Further, we have that $q^{-1}(\chi)\in \mathcal{U}$ and $F_n(f)^k(q^{-1}(\chi))\in \mathcal{V}$. On the other hand, by \cite[Theorem 4.10]{a16}, $q^{-1}(\chi)$ is a periodic point of $F_n(f)$. Therefore $F_n(f)$ is Touhey.

\end{proof}

\begin{theorem}
Let $X$ be a compactum, let $n\geq2$, let $B\in F_n(X)$, and let $f: X \rightarrow X$ be a function. Consider the following statements:

(1)$f$ is two-sided transitive.

(2)$F_n(f)$ is two-sided transitive.

(3)$SF_n(f)$ is two-sided transitive.

Then (2) and (3) are equivalent, (2) implies (1).
\end{theorem}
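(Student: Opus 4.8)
The plan is to separate the two clauses in the definition of two-sided transitivity — being a homeomorphism, and possessing a point with dense forward orbit — and to handle them independently, since the orbit clause is exactly what Theorem \ref{dltr} governs. First I would check that $f$ is a homeomorphism if and only if $F_n(f)$ is, if and only if $SF_n(f)$ is. As $X$, $F_n(X)$ and $SF_n(X)$ are compact Hausdorff, a continuous self-bijection of any of them is automatically a homeomorphism, so it is enough to track bijectivity. If $F_n(f)$ is injective then $f$ is injective, for $f(a)=f(b)$ with $a\neq b$ would give $F_n(f)(\{a\})=\{f(a)\}=\{f(b)\}=F_n(f)(\{b\})$; and if $F_n(f)$ is surjective then so is $f$, since each $\{x\}$ is an image $F_n(f)(A)=f(A)$, placing $x\in f(X)$. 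Conversely, if $f$ is a homeomorphism then $F_n(f)$ is a bijection with continuous inverse $F_n(f^{-1})$, and $|f(A)|=|A|$ forces $F_n(f)$ to carry $F_n(X)\setminus F_1(X)$ bijectively onto itself; with $SF_n(f)(F_X)=F_X$, Remark \ref{remark1}, and the homeomorphism $q\colon F_n(X)\setminus F_1(X)\to SF_n(X)\setminus\{F_X\}$ of Remark \ref{remark2}, this makes $SF_n(f)$ a continuous self-bijection of $SF_n(X)$, hence a homeomorphism. Finally, if $SF_n(f)$ is a homeomorphism it fixes $F_X$, so it restricts to a self-homeomorphism of $SF_n(X)\setminus\{F_X\}$; transporting this through $q$ via Remarks \ref{remark1} and \ref{remark2} shows $F_n(f)$ restricts to a self-homeomorphism of $F_n(X)\setminus F_1(X)$, whence $F_n(f)$ sends every set of at least two points to a set of at least two points (so $f$ is injective) and already has every two-point set in its image (so, using $n\geq2$ and the nondegeneracy of $X$, $f$ is surjective); thus $f$, and therefore $F_n(f)$, are homeomorphisms.

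With this in hand, $(2)\Rightarrow(1)$ goes as follows. Assume $F_n(f)$ is two-sided transitive; then $F_n(f)$ is a homeomorphism, so $f$ is a homeomorphism. Choose $B\in F_n(X)$ with $\{F_n(f)^k(B):k\in\mathbb{Z_+}\}$ dense in $F_n(X)$, that is $B\in trans(F_n(f))$. By the implication $(2)\Rightarrow(1)$ of Theorem \ref{dltr}, every $x\in B$ is a transitive point of $f$; choosing any $x\in B$ (possible since $B$ is a nonempty subset of $X$) produces a point of $X$ with dense forward orbit, so $f$ is two-sided transitive.

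For $(2)\Leftrightarrow(3)$: by the first paragraph, $F_n(f)$ is a homeomorphism if and only if $SF_n(f)$ is. For the orbit clause, if $B\in trans(F_n(f))$ then $q(B)\in trans(SF_n(f))$ by $(2)\Rightarrow(3)$ of Theorem \ref{dltr}; conversely, if $\chi\in trans(SF_n(f))$ then $\chi\neq F_X$ — since $F_X$ is a fixed point of $SF_n(f)$ while $SF_n(X)$ is nondegenerate — so $\chi=q(B)$ with $B\in F_n(X)\setminus F_1(X)$, and $(3)\Rightarrow(2)$ of Theorem \ref{dltr} gives $B\in trans(F_n(f))$. Hence $F_n(f)$ is two-sided transitive if and only if $SF_n(f)$ is.

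The main obstacle I anticipate is the half of the first paragraph that deduces that $F_n(f)$ is a homeomorphism from the assumption that $SF_n(f)$ is: one must use that a self-bijection of $SF_n(X)$ fixing $F_X$ is forced to preserve $SF_n(X)\setminus\{F_X\}$, pull the resulting self-homeomorphism back to $F_n(X)\setminus F_1(X)$ through Remarks \ref{remark1} and \ref{remark2}, and then rule out $f$ collapsing two points by a cardinality count. All the remaining content is a direct application of Theorem \ref{dltr}.
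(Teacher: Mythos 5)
Your route is genuinely different from the paper's. The paper proves almost nothing of this theorem directly: it quotes \cite[Theorem 4.21]{a16} for (2)$\Rightarrow$(1) and (2)$\Rightarrow$(3), quotes \cite[Theorem 4.2]{a16} for the fact that $F_n(f)$ is a homeomorphism once $SF_n(f)$ is, and its only new content is the orbit half of (3)$\Rightarrow$(2), done by the canonical refinement argument (pick $\langle W_1,\dots,W_n\rangle_n\subseteq\mathcal{U}$ missing $F_1(X)$, push forward by $q$, use density of the orbit of $\chi$ with $k\in\mathbb{Z}$, pull back through Remarks~\ref{remark1} and~\ref{remark2}). You instead make the whole statement self-contained by splitting two-sided transitivity into the homeomorphism clause, which you transfer between $f$, $F_n(f)$ and $SF_n(f)$ by hand, and the dense-orbit clause, which you delegate to Theorem~\ref{dltr}. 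That reuse of Theorem~\ref{dltr} is economical and avoids repeating the open-set bookkeeping; the price is that you must actually supply the homeomorphism transfer that the paper only cites.

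Two points need attention. First, the orbit clause: Theorem~\ref{dltr} concerns forward orbits ($k\in\mathbb{Z_+}$), whereas the paper's proof of the present theorem works with the full orbit $\{SF_n(f)^l(\chi):l\in\mathbb{Z}\}$ (the $\mathbb{Z_+}$ in the paper's definition of two-sided transitivity is evidently a slip, given the name and the proof). Under the two-sided reading you cannot invoke Theorem~\ref{dltr} verbatim in either (2)$\Rightarrow$(1) or (2)$\Leftrightarrow$(3); you should either say explicitly that its proof goes through word-for-word with $k\in\mathbb{Z}$ once the maps involved are homeomorphisms (Remark~\ref{remark1} then extends to negative powers), or redo the refinement argument as the paper does. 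Second, and more substantively, your reduction ``it is enough to track bijectivity'' is not enough in the direction (3)$\Rightarrow$(2): continuity of $F_n(f)$ (equivalently of $f$) is part of what must be proved, and it does not follow formally from continuity of $SF_n(f)$. Conjugation through $q$ only yields continuity of $F_n(f)$ on $F_n(X)\setminus F_1(X)$ (and even that step tacitly uses that injectivity of $SF_n(f)$, which fixes $F_X$, prevents $F_n(f)$ from sending a non-singleton into $F_1(X)$ --- worth stating); continuity at singletons requires a further argument, e.g.\ fixing $a\neq x$, using $x\mapsto\{x,a\}$, injectivity of $f$ and Hausdorffness, or simply citing \cite[Theorem 4.2]{a16}, which is exactly how the paper disposes of this point. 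Your bijectivity bookkeeping for $f$ (injectivity from two-point sets, surjectivity from $n\geq2$ and nondegeneracy) is correct; only the continuity half of the homeomorphism transfer is missing.
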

\begin{proof}
By \cite[Theorem 4.21]{a16}, we need only to prove that (3) implies (2).
Suppose that $SF_n(f)$ is two-sided transitive. Then $SF_n(f)$ is a homeomorphism and there exists $\chi\in SF_n(X)$ such that $\{SF_n(f)^l(\chi):l\in \mathbb{Z}\}$ is dense in $SF_n(X)$. Note that $\chi\neq F_X$, so there exists $A\in F_n(X)\setminus F_1(X)$ such that $p(A)=\chi$. By \cite[Theorem 4.2]{a16}, we have that $F_n(f)$ is a homeomorphism. Let $\mathcal{U}$ be nonempty open subset of $F_n(X)$. By \cite[Lemma4.2]{a4}, there exist nonempty open subsets $U_1,U_2,...,U_n$ of $X$ such that $\langle U_1,U_2,...,U_n\rangle_n\subseteq \mathcal{U}$. For every $i\in\{1,2,...,n\}$, let $W_i$ be a nonempty open subset of $X$ such that $W_i\subseteq U_i$ and for any $i,j\in\{1,2,...,n\}$, $W_i\cap W_j\neq\emptyset$ if $i\neq j$. Note that $\langle U_1,U_2,...,U_n\rangle_n$ is nonempty open subsets of $F_n(X)$ such that $\langle W_1,W_2,...,W_n\rangle_n\subseteq \langle U_1,U_2,...,U_n\rangle_n\subseteq \mathcal{U}$ and $\langle W_1,W_2,...,W_n\rangle_n\cap F_1(X)=\emptyset$. By Remark \ref{remark2}, $q(\langle W_1,W_2,...,W_n\rangle_n)$ is nonempty open subsets of $SF_n(X)$ and $F_X\notin q(\langle W_1,W_2,...,W_n\rangle_n)$. By hypothesis, there exists $k\in\mathbb{Z}$ such that $SF_n(f)^k(\chi)\in q(\langle W_1,W_2,...,W_n\rangle_n)$.
By Remark \ref{remark1}, we have that $q\circ F_n(f)^k(q^{-1}(\chi))\in q(\langle W_1,W_2,...,W_n\rangle_n)$. By Remark \ref{remark2}, $F_n(f)^k(q^{-1}(\chi))\in \langle W_1,W_2,...,W_n\rangle_n$. Further, we have that $F_n(f)^k(q^{-1}(\chi))\in \mathcal{U}$. Consequently, $\{SF_n(f)^l(\chi):l\in \mathbb{Z}\}\cap \mathcal{U}\neq\emptyset$. Therefore, $F_n(f)$ is two-sided transitive.

\end{proof}

The following two Theorems extend Theorem 4.14 and Theorem 5.12 in \cite{a6}.
\begin{theorem}
Let $X$ be a compactum, let $n\geq2$, let $A\in F_n(X)$, and let $f: X \rightarrow X$ be a function. Consider the following statements:

(1)$f$ is fully exact.

(2)$F_n(f)$ is fully exact.

(3)$SF_n(f)$ is fully exact.

Then (2) and (3) are equivalent, and (2) implies (1).
\end{theorem}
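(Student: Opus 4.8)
The plan is to run the three-part scheme used repeatedly in this section: prove $(2)\Rightarrow(1)$ by pushing the cylinder sets $\langle U\rangle_n$ down to $X$, and prove the equivalence $(2)\Leftrightarrow(3)$ by transporting open sets across the quotient map $q$ with the help of Remark \ref{remark1}, Remark \ref{remark2} and \cite[Lemma 4.2]{a4}. The new feature here, compared with the transitivity-type theorems, is that full exactness demands $int\bigl[f^k(U)\cap f^k(V)\bigr]\neq\emptyset$, i.e.\ a nonempty \emph{open} subset of the intersection of the two images, not merely a single common point; so at each stage I must produce an open witness and then move the property ``has nonempty interior'' back and forth across $q$. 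Remark \ref{remark2} makes this possible: $q$ restricts to a homeomorphism of $F_n(X)\setminus F_1(X)$ onto $SF_n(X)\setminus\{F_X\}$, the latter set is open (its complement $\{F_X\}$ is a point of the metrizable space $SF_n(X)$, hence closed) and dense, and $q^{-1}(F_X)=F_1(X)$. Together with the standard observation that a cylinder $\langle W_1,\dots,W_n\rangle_n$ with the $W_i$ pairwise disjoint misses $F_1(X)$, \cite[Lemma 4.2]{a4} lets me shrink any nonempty open subset of $F_n(X)$ to a nonempty open subset disjoint from $F_1(X)$, and shrink any nonempty open subset of $SF_n(X)$ to one avoiding $F_X$.

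For $(2)\Rightarrow(1)$: given nonempty open $U,V\subseteq X$, apply full exactness of $F_n(f)$ to $\langle U\rangle_n$ and $\langle V\rangle_n$, obtaining $k\in\mathbb{N}$ and a nonempty open $\mathcal{W}\subseteq F_n(f)^k(\langle U\rangle_n)\cap F_n(f)^k(\langle V\rangle_n)$. Put $W=\bigcup_{A\in\mathcal{W}}A$, a nonempty open subset of $X$ by \cite[Theorem 3.2]{a10}. If $p\in W$ then $p\in A$ for some $A\in\mathcal{W}$; writing $A=F_n(f)^k(B)=f^k(B)$ with $B\in\langle U\rangle_n$ gives $A\subseteq f^k(U)$, so $p\in f^k(U)$, and likewise $p\in f^k(V)$. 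Hence $W\subseteq f^k(U)\cap f^k(V)$, so $int\bigl[f^k(U)\cap f^k(V)\bigr]\neq\emptyset$ and $f$ is fully exact.

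For $(2)\Rightarrow(3)$: given nonempty open $\Gamma,\Lambda\subseteq SF_n(X)$, the sets $q^{-1}(\Gamma),q^{-1}(\Lambda)$ are nonempty open in $F_n(X)$. Full exactness of $F_n(f)$ gives $k$ and a nonempty open $\mathcal{W}\subseteq F_n(f)^k(q^{-1}(\Gamma))\cap F_n(f)^k(q^{-1}(\Lambda))$; shrink $\mathcal{W}$ so that $\mathcal{W}\cap F_1(X)=\emptyset$, so $q(\mathcal{W})$ is nonempty open in $SF_n(X)$. Using $q\circ F_n(f)^k=SF_n(f)^k\circ q$ (Remark \ref{remark1}) and surjectivity of $q$,
\[
q(\mathcal{W})\subseteq q\bigl(F_n(f)^k(q^{-1}(\Gamma))\bigr)\cap q\bigl(F_n(f)^k(q^{-1}(\Lambda))\bigr)=SF_n(f)^k(\Gamma)\cap SF_n(f)^k(\Lambda),
\]
so $int\bigl[SF_n(f)^k(\Gamma)\cap SF_n(f)^k(\Lambda)\bigr]\neq\emptyset$ and $SF_n(f)$ is fully exact. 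For $(3)\Rightarrow(2)$: given nonempty open $\mathcal{U},\mathcal{V}\subseteq F_n(X)$, take cylinders $\langle W_1,\dots,W_n\rangle_n\subseteq\mathcal{U}$ and $\langle O_1,\dots,O_n\rangle_n\subseteq\mathcal{V}$ with pairwise disjoint coordinates, so both are disjoint from $F_1(X)$; then $q(\langle W_1,\dots,W_n\rangle_n)$ and $q(\langle O_1,\dots,O_n\rangle_n)$ are nonempty open in $SF_n(X)$. Full exactness of $SF_n(f)$ gives $k$ and a nonempty open $\Gamma\subseteq SF_n(f)^k\bigl(q(\langle W_1,\dots,W_n\rangle_n)\bigr)\cap SF_n(f)^k\bigl(q(\langle O_1,\dots,O_n\rangle_n)\bigr)$; shrink $\Gamma$ so that $F_X\notin\Gamma$, whence $q^{-1}(\Gamma)$ is nonempty open and disjoint from $F_1(X)$. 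For $A\in q^{-1}(\Gamma)$ we have $q(A)\in SF_n(f)^k\bigl(q(\langle W_1,\dots,W_n\rangle_n)\bigr)=q\bigl(F_n(f)^k(\langle W_1,\dots,W_n\rangle_n)\bigr)$ by Remark \ref{remark1}, so $q(A)=q(B)$ for some $B\in F_n(f)^k(\langle W_1,\dots,W_n\rangle_n)$; since $q(A)\neq F_X$ this $B$ is not in $F_1(X)$, and injectivity of $q$ on $F_n(X)\setminus F_1(X)$ forces $A=B$. Thus $q^{-1}(\Gamma)\subseteq F_n(f)^k(\langle W_1,\dots,W_n\rangle_n)$, and symmetrically $q^{-1}(\Gamma)\subseteq F_n(f)^k(\langle O_1,\dots,O_n\rangle_n)$, so $q^{-1}(\Gamma)\subseteq F_n(f)^k(\mathcal{U})\cap F_n(f)^k(\mathcal{V})$ and this intersection has nonempty interior; hence $F_n(f)$ is fully exact.

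The main obstacle is entirely the bookkeeping around interiors and the collapsed fibre $F_1(X)$. In $(3)\Rightarrow(2)$ one has to guarantee that a preimage $B$ of a point of $\Gamma$ cannot lie in $F_1(X)$ — which is precisely why $\Gamma$ is first shrunk to avoid $F_X$ — since only then can $q$ be inverted there; and in $(2)\Rightarrow(3)$ the open witness furnished by full exactness of $F_n(f)$ must be pushed into $F_n(X)\setminus F_1(X)$ before $q$ is applied to it. Apart from these points, everything reduces to the routine cylinder-and-quotient manipulations already carried out several times in the preceding theorems.
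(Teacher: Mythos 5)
Your argument is correct and, for the one implication the paper actually proves, namely (3) $\Rightarrow$ (2), it follows essentially the same scheme: shrink $\mathcal{U},\mathcal{V}$ to cylinders with pairwise disjoint coordinates so that they miss $F_1(X)$, push them through $q$, apply full exactness of $SF_n(f)$, and pull the open witness back with Remark \ref{remark1} and Remark \ref{remark2}. Two differences are worth recording. First, the paper disposes of (2) $\Rightarrow$ (1) and (2) $\Rightarrow$ (3) by citing \cite[Theorem 4.3]{a6}, whereas you prove them directly (using \cite[Theorem 3.2]{a10} for the union map and the saturation of an open set missing $F_1(X)$ for the push-forward); your version is self-contained but otherwise routine. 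Second, and more substantively, at the delicate point of (3) $\Rightarrow$ (2) the paper passes from $q^{-1}(\Omega)$ to $F_n(f)^k(\langle W_1,\dots,W_n\rangle_n)\cap F_n(f)^k(\langle O_1,\dots,O_n\rangle_n)$ via the containment $q^{-1}(q(S)\cap q(T))\subseteq S\cap T$, which is only legitimate after one knows $F_X$ does not lie in the intersection of the images (the images $F_n(f)^k(\langle W_1,\dots,W_n\rangle_n)$ can meet $F_1(X)$ if $f^k$ collapses a set to a point, and the paper asserts rather than proves the needed exclusion). You instead shrink the open witness $\Gamma$ away from $F_X$ first and then invoke injectivity of $q$ on $F_n(X)\setminus F_1(X)$ pointwise, which yields the same containment without any saturation hypothesis; this is the cleaner treatment of the collapsed fibre. (Your appeal to metrizability to see that $\{F_X\}$ is closed is unnecessary in the compactum setting: $q^{-1}(\{F_X\})=F_1(X)$ is closed and $q$ is a quotient map, which suffices.)
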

\begin{proof}
By \cite[Theorem 4.3]{a6}, we need only to prove that (3) implies (2).
Let $\mathcal{U},\mathcal{V}$ be nonempty open subsets of $F_n(X)$. By \cite[Lemma4.2]{a4}, there exist nonempty open subsets $U_1,U_2,...,U_n$ and $V_1,V_2,...,V_n$ of $X$ such that $\langle U_1,U_2,...,U_n\rangle_n\subseteq \mathcal{U}$ and $\langle V_1,V_2,...,V_n\rangle_n\subseteq \mathcal{V}$. For every $i\in\{1,2,...,n\}$, let $W_i$ be a nonempty open subset of $X$ such that $W_i\subseteq U_i$ and for any $i,j\in\{1,2,...,n\}$, $W_i\cap W_j\neq\emptyset$ if $i\neq j$. Similarly, for each $i\in\{1,2,...,n\}$, let $O_i$ be a nonempty open subset of $X$ such that $O_i\subseteq V_i$ and for any $i,j\in\{1,2,...,n\}$, $O_i\cap O_j\neq\emptyset$ if $i\neq j$. Note that $\langle U_1,U_2,...,U_n\rangle_n$ and $\langle V_1,V_2,...,V_n\rangle_n$ are nonempty open subsets of $F_n(X)$ such that $\langle W_1,W_2,...,W_n\rangle_n\subseteq \langle U_1,U_2,...,U_n\rangle_n\subseteq \mathcal{U}$, $\langle O_1,O_2,...,O_n\rangle_n\subseteq \langle V_1,V_2,...,V_n\rangle_n\subseteq \mathcal{V}$, $\langle W_1,W_2,...,W_n\rangle_n\cap F_1(X)=\emptyset$, and $\langle O_1,O_2,...,O_n\rangle_n\cap F_1(X)=\emptyset$. By Remark \ref{remark2}, $q(\langle W_1,W_2,...,W_n\rangle_n)$ and $q(\langle O_1,O_2,...,O_n\rangle_n)$ are nonempty open subsets of $SF_n(X)$ such that $F_X\notin q(\langle W_1,W_2,...,W_n\rangle_n)$ and $F_X\notin q(\langle O_1,O_2,...,O_n\rangle_n)$.
 Since $SF_n(f)$ is fully exact, there exists $k\in\mathbb{N}$ such that $int[SF_n(f)^k(q(\langle W_1,W_2,...,W_n\rangle_n))\cap SF_n(f)^k(q(\langle W_1,W_2,...,W_n\rangle_n))]\neq \emptyset$. Thus, there exists a nonempty open subset $\Omega$ of $SF_n(X)$ such that $\Omega\subseteq SF_n(f)^k(q(\langle W_1,W_2,...,W_n\rangle_n))\cap SF_n(f)^k(q(\langle W_1,W_2,...,W_n\rangle_n))$.
By Remark \ref{remark1}, we have that $\Omega\subseteq q\circ F_n(f)^k(\langle W_1,W_2,...,W_n\rangle_n)\cap q\circ F_n(f)^k(\langle W_1,W_2,...,W_n\rangle_n)$ and that $F_X\notin q\circ F_n(f)^k(\langle W_1,W_2,...,W_n\rangle_n)\cap q\circ F_n(f)^k(\langle W_1,W_2,...,W_n\rangle_n)$.
Therefore,
\begin{equation}
\begin{aligned}
q^{-1}(\Omega)\subseteq& q^{-1}\circ (q\circ F_n(f)^k(\langle W_1,W_2,...,W_n\rangle_n)\cap q\circ F_n(f)^k(\langle W_1,W_2,...,W_n\rangle_n))\\
\subseteq&F_n(f)^k(\langle W_1,W_2,...,W_n\rangle_n)\cap F_n(f)^k(\langle W_1,W_2,...,W_n\rangle_n)\\
\subseteq&F_n(f)^k(\mathcal{U})\cap F_n(f)^k(\mathcal{V}).
\end{aligned}
\end{equation}
Since $q^{-1}$ is continuous, $q^{-1}(\Omega)$ is a nonempty open subsets of $F_n(X)$. Thus, $int(F_n(f)^k(\mathcal{U})\cap F_n(f)^k(\mathcal{V}))\neq \emptyset)$.  Therefore $F_n(f)$ is fully exact.

\end{proof}

\begin{theorem}
Let $X$ be a compactum, let $n\geq2$, let $A\in F_n(X)$, and let $f: X \rightarrow X$ be a function. Consider the following statements:

(1)$f$ is strongly transitive.

(2)$F_n(f)$ is strongly transitive.

(3)$SF_n(f)$ is strongly transitive.

Then (2) implies (3), (3) implies (1), but (1) does not imply (2) or (3).
\end{theorem}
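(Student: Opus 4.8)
The plan is to obtain the two implications from the semiconjugacy $q\circ F_n(f)^i=SF_n(f)^i\circ q$ (Remark \ref{remark1}) together with the surjectivity of the quotient map $q:F_n(X)\rightarrow SF_n(X)$, and to obtain the two non-implications from the irrational rotation of \cite[Example 4.6]{a5}. As elsewhere in the paper, I read ``strongly transitive'' with nonempty \emph{open} test sets.

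First I would prove that (2) implies (3). Let $\Gamma$ be a nonempty open subset of $SF_n(X)$. Then $q^{-1}(\Gamma)$ is a nonempty open subset of $F_n(X)$, so strong transitivity of $F_n(f)$ yields $M\in\mathbb{N}$ with $\bigcup_{i=1}^{M}F_n(f)^i(q^{-1}(\Gamma))=F_n(X)$. Applying the surjection $q$ and using Remark \ref{remark1} together with $q(q^{-1}(\Gamma))=\Gamma$,
\[
SF_n(X)=q\left(\bigcup_{i=1}^{M}F_n(f)^i(q^{-1}(\Gamma))\right)=\bigcup_{i=1}^{M}SF_n(f)^i(q(q^{-1}(\Gamma)))=\bigcup_{i=1}^{M}SF_n(f)^i(\Gamma),
\]
so $SF_n(f)$ is strongly transitive. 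This step is routine bookkeeping.

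Next I would prove that (3) implies (1). Let $U$ be a nonempty open subset of $X$. Since $X$ is perfect and Hausdorff, $U$ contains $n$ pairwise disjoint nonempty open sets $W_1,\dots,W_n$; then $\langle W_1,\dots,W_n\rangle_n$ is a nonempty open subset of $F_n(X)$ with $\langle W_1,\dots,W_n\rangle_n\cap F_1(X)=\emptyset$, so by Remark \ref{remark2} the set $\Gamma:=q(\langle W_1,\dots,W_n\rangle_n)$ is a nonempty open subset of $SF_n(X)$ with $F_X\notin\Gamma$. Strong transitivity of $SF_n(f)$ gives $M$ with $\bigcup_{i=1}^{M}SF_n(f)^i(\Gamma)=SF_n(X)$. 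Now fix $x\in X$ and, using that $X$ has at least two points, choose $B\in F_n(X)\setminus F_1(X)$ with $x\in B$, so $q(B)\neq F_X$. Then $q(B)=SF_n(f)^i(q(A))$ for some $i\in\{1,\dots,M\}$ and some $A\in\langle W_1,\dots,W_n\rangle_n$; by Remark \ref{remark1} this says $q(F_n(f)^i(A))=q(B)\neq F_X$, and since $q$ is injective on $F_n(X)\setminus F_1(X)$ (Remark \ref{remark2}) we conclude $F_n(f)^i(A)=f^i(A)=B$. Hence $x\in B=f^i(A)\subseteq f^i(U)$, and since $x$ is arbitrary, $\bigcup_{i=1}^{M}f^i(U)=X$; thus $f$ is strongly transitive.

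Finally, for the non-implications I would take $f$ to be the irrational rotation $f(e^{2\pi i\theta})=e^{2\pi i(\theta+\alpha)}$ of \cite[Example 4.6]{a5}: $f$ is minimal, hence strongly transitive, whereas that example gives that $SF_n(f)$ is not even transitive, so (1) does not imply (3); and since $f$ is not weakly mixing, Theorem \ref{thdj} (equivalently \cite[Theorem 4.10]{a5}) shows $F_n(f)$ is not transitive, hence not strongly transitive, so (1) does not imply (2). The main obstacle is the implication (3)$\Rightarrow$(1): one must choose the test set $\Gamma$ so as to avoid the collapsed point $F_X$ and lift an arbitrary $x\in X$ to a set $B\in F_n(X)\setminus F_1(X)$, so that the injectivity of $q$ off $F_1(X)$ can be used to transfer information from $SF_n(f)$ back to $F_n(f)$ and then to $f$; the remaining steps are essentially formal.
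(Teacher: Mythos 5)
Your proposal is correct, and it is more self-contained than the paper's own proof. The paper disposes of (2)$\Rightarrow$(3), (3)$\Rightarrow$(1), and the failure of (1)$\Rightarrow$(2) in one stroke by citing \cite[Theorem 5.12]{a6}, and only supplies the genuinely new ingredient: the irrational rotation is strongly transitive, while $SF_n(f)$ fails to be transitive by Theorem \ref{thdj} (since $f$ is not weakly mixing), hence fails to be strongly transitive -- which is exactly the counterexample argument you give, so on the non-implications your proof and the paper's coincide. What you add are direct proofs of the two implications: (2)$\Rightarrow$(3) by pushing a finite cover $\bigcup_{i=1}^{M}F_n(f)^i(q^{-1}(\Gamma))=F_n(X)$ forward through the surjective semiconjugacy of Remark \ref{remark1}, and (3)$\Rightarrow$(1) by testing strong transitivity of $SF_n(f)$ on $\Gamma=q(\langle W_1,\dots,W_n\rangle_n)$ with the $W_i$ pairwise disjoint inside $U$ (so $\Gamma$ avoids $F_X$), lifting an arbitrary $x\in X$ to a set $B\in F_n(X)\setminus F_1(X)$, and using the injectivity of $q$ off $F_1(X)$ (Remark \ref{remark2}) to recover $f^i(A)=B$ and hence $x\in f^i(U)$; this is the same hyperspace--suspension technique the paper uses in its other theorems (and presumably what \cite{a6} does), carried out rather than cited, which makes the result verifiable within the paper at the cost of some repetition. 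One step you pass over quickly but which is immediate and worth stating: from $q(F_n(f)^i(A))=q(B)\neq F_X$ and $q^{-1}(F_X)=F_1(X)$ it follows that $F_n(f)^i(A)\notin F_1(X)$, so the injectivity of $q$ on $F_n(X)\setminus F_1(X)$ really does apply; and your reading of ``strongly transitive'' with nonempty \emph{open} test sets is the intended one, despite the wording of the definition in Section 2.
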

\begin{proof}
By \cite[Theorem 5.12]{a6}, we need only to prove that (1) does not imply (3).

Let $f$ be the irrational rotation map, then $f$ is strongly transitive, but $SF_n(f)$ is not weakly mixing. Thus, $SF_n(f)$ is not transitive by Theorem \ref{thdj}, which implies that $SF_n(f)$ is not strongly transitive.

\end{proof}

%\section*{Acknowledgments}

%\bibliography{mybibfile}

\begin{thebibliography}{0}

\bibitem{a1} K. Borsuk, S. Ulam, On symmetric products of topological spaces, Bull. Am. Math. Soc. 37 (12) (1931) 875-883.
\bibitem{a2}  F. Barragn, On the n-fold symmetric product suspensions of a continuum, Topol. Appl. 157 (3) (2010) 597-604.

\bibitem{a3} F. Barrragn, Induced maps on n-fold symmetric product suspensions, Topol. Appl. 158 (10) (2011) 1192-1205.

\bibitem{a4} G. Higuera, A. Illanes, Induced mappings on symmetric products, Topol. Proc. 37 (2011) 367-401.

\bibitem{a5} F. Barragn, A. Santiago-Santos, J. Tenorio, Dynamic properties for the induced maps on n-fold symmetric product suspensions, Glas. Mat. 51 (2) (2016) 453-474.
\bibitem{a6}   F. Barragn, A. Santiago-Santos, J. Tenorio, Dynamic properties for the induced maps on n-fold symmetric product suspensions ll, Topol. Appl. 288 (2021) 107484.

\bibitem{a7}  A. Illanes, V. Martnez-de-la-Vega, Dynamics of induced mappings on symmetric products, some answers, Appl. Gen.
Topol. 23 (2) (2022) 235-242.

\bibitem{a8}  E. Akin, J.D. Carlson, Conceptions of topological transitivity, Topol. Appl. 159 (12) (2012) 2815-2830.

\bibitem{a9} J.H. Mai, W.H. Sun, Transitivities of maps of general topological spaces, Topol. Appl. 157 (5) (2010) 946-953.
\bibitem{a10}   F. Barragn, S. Macas, A. Rojas, Conceptions of topological transitivity on symmetric products, Math. Pannon. (N.S.)
27 (1) (2021) 61-80.

\bibitem{a11}  A. Rojas, F. Barragn, S. Macas, Conceptions on topological transitivity in products and symmetric products, Turk. J.
Math. 44 (2020) 491-523.

%\bibitem{a12}  N. Deirmenci, S. Kocak, Chaos in product maps, Turk. J. Math. 34 (4) (2010) 593-600.

\bibitem{a13}  X. Wang, Y. Huang, Devaney's chaos revisited, Topogoly. Appl., 160 (2013) 455-460.

%\bibitem{a14}  F. Barragan, S. Macias, A. Rojas, Conceptions of topological transitivity on symmetric products, Math. Appl., 160 (2021) 61-80.

\bibitem{a15}  T. Wang, J. Yin, Q. Yan, Several transitive properties and Devaney's chaos, Acta Mathematica Sinica. , 32 (3)(2016) 373-383.

\bibitem{a16}   F. Barragan, S. Macias, A. Rojas, Dynamic properties for the induced maps on symmetric product suspensions of a topological space, Topo. Appl., 342 (2024) 108793.

\bibitem{a17} G. Liu, T. Lu, X. Yang, W. Anwar, Further discussion about transitivity and mixing of continuous maps on compact metric spaces. Journal of Mathematical Physics, 61(2020)112701.

\bibitem{a18} F. Barragn, S. Macas, J. Tenorio, More on induced maps on n-fold symmetric product suspensions, Glas. Mat. 50 (70)(2015) 489-512.
\end{thebibliography}

\end{document}